\newtheorem{thm}{\textbf{Theorem}}
\newtheorem{coro}{\textbf{Corollary}}
\newtheorem{assum}{\textbf{Assumption}}
\newtheorem{defn}{\textbf{Definition}}
\newtheorem{props}{\textbf{Proposition}}
\newtheorem{lemma}{\textbf{Lemma}}
\newtheorem{remark}{\textbf{Remark}}
\newtheorem{problem}{\textbf{Problem}}
\newcommand{\df}{\mathrm{d}}
\newcommand{\ef}{\mathrm{e}}
\newcommand{\supp}{\mathrm{supp}}
\def\eqref#1{equation~\ref{#1}}
\def\1{\bm{1}}
\def\eps{{\epsilon}}
\def\rvxi{{\mathbf{\xi}}}
\def\rvg{{\mathbf{g}}}
\def\rvh{{\mathbf{h}}}
\def\rvu{{\mathbf{i}}}
\def\rvs{{\mathbf{s}}}
\def\rvu{{\mathbf{u}}}
\def\rvx{{\mathbf{x}}}
\def\rvy{{\mathbf{y}}}
\def\rmS{{\mathbf{S}}}
\def\rmX{{\mathbf{X}}}
\def\rmY{{\mathbf{Y}}}
\def\mA{{\bm{A}}}
\def\mI{{\bm{I}}}
\def\mS{{\bm{S}}}
\DeclareMathAlphabet{\mathsfit}{\encodingdefault}{\sfdefault}{m}{sl}
\SetMathAlphabet{\mathsfit}{bold}{\encodingdefault}{\sfdefault}{bx}{n}
\def\gA{{\mathcal{A}}}
\def\gD{{\mathcal{D}}}
\def\gH{{\mathcal{H}}}
\def\gO{{\mathcal{O}}}
\def\gP{{\mathcal{P}}}
\def\gR{{\mathcal{R}}}
\def\gS{{\mathcal{S}}}
\def\gV{{\mathcal{V}}}
\def\gX{{\mathcal{X}}}
\def\gY{{\mathcal{Y}}}
\def\sN{{\mathbb{N}}}
\def\sR{{\mathbb{R}}}
\def\sT{{\mathbb{T}}}
\def\cA{{\mathscr{A}}}
\def\cB{{\mathscr{B}}}
\def\cF{{\mathscr{F}}}
\newcommand{\E}{\mathbb{E}}
\newcommand{\R}{\mathbb{R}}
\newcommand{\diam}{\mathrm{diam}}
\begin{document}

\title{On the Nonasymptotic Scaling Guarantee of Hyperparameter Estimation in Inhomogeneous, Weakly-Dependent Complex Network Dynamical Systems}

\author{Yi Yu, Yubo Hou, Yinchong Wang, Nan Zhang, Jianfeng Feng,~\IEEEmembership{Senior Member,~IEEE}, Wenlian Lu,~\IEEEmembership{Senior Member,~IEEE}
\thanks{The authors were jointly supported by the STCSM (No. 23JC1400800), the JiHua Laboratory S\&T Program (No. X250881UG250), the Lingang Laboratory, Grant (No. LGL-1987), and the key projects of "Double First-Class" initiative of Fudan University (\textit{corresponding author: Wenlian Lu, Jianfeng Feng, Nan Zhang}).

Yi Yu is with the School of Mathematical Sciences, Fudan University, Shanghai 200433, China (e-mail: y\_yu21@m.fudan.edu.cn).

Yubo Hou is with the School of Mathematical Sciences, Fudan University, Shanghai 200433, China (e-mail: 22110180017@m.fudan.edu.cn).

Yinchong Wang is with the School of Mathematical Sciences, Fudan University, Shanghai 200433, China (e-mail: 20110180040@fudan.edu.cn).

Nan Zhang is with the Institute of Science and Technology for Brain-Inspired Intelligence, Fudan University, Shanghai
200433, China, and the School of Data Science, Fudan University, Shanghai 200433, China (e-mail: zhangnan@fudan.edu.cn).

Jianfeng Feng is with the Institute of Science and Technology for Brain-Inspired Intelligence, Fudan University, Shanghai
200433, China; Key Laboratory of Computational Neuroscience and Brain-Inspired Intelligence (Fudan University), Ministry of Education Fudan University, Shanghai 200433, China; Shanghai Center for Mathematical Sciences, Fudan University, Shanghai 200433, China; the Department of Computer Science, University of Warwick, Coventry CV47AL, UK; Zhang Jiang Fudan International Innovation Center, Fudan University, Shanghai 200433, China, and Ji Hua Laboratory, Foshan, China (e-mail: jianfeng64@gmail.com).

Wenlian Lu is with the Center for Applied Mathematics, Fudan University, Shanghai 200433, China; the School of Mathematical Sciences, Fudan University, Shanghai 200433, China; Shanghai Center for Mathematical Sciences, Fudan University, Shanghai 200433, China; the Institute of Science and Technology for Brain-Inspired Intelligence, Fudan University, Shanghai 200433, China; Ji Hua Laboratory, Foshan, China and Shanghai Key Laboratory of Contemporary Applied Mathematics, Fudan University, Shanghai 200433, China (e-mail: wenlian@fudan.edu.cn).}
}



\maketitle

\begin{abstract}
Hierarchical Bayesian models are increasingly used in large, inhomogeneous complex network dynamical systems by modeling parameters as draws from a hyperparameter-governed distribution. However, theoretical guarantees for these estimates as the system size grows have been lacking. A critical concern is that hyperparameter estimation may diverge for larger networks, undermining the model's reliability. Formulating the system's evolution in a measure transport perspective, we propose a theoretical framework for estimating hyperparameters with mean-type observations, which are prevalent in many scientific applications. Our primary contribution is a nonasymptotic bound for the deviation of estimate of hyperparameters in inhomogeneous complex network dynamical systems with respect to network population size, which is established for a general family of optimization algorithms within a fixed observation duration. While we firstly establish a consistency result for systems with independent nodes, our main result extends this guarantee to the more challenging and realistic setting of weakly-dependent nodes. We validate our theoretical findings with numerical experiments on two representative models: a Susceptible-Infected-Susceptible model and a Spiking Neuronal Network model. In both cases, the results confirm that the estimation error decreases as the network population size increases, aligning with our theoretical guarantees. This research proposes the foundational theory to ensure that hierarchical Bayesian methods are statistically consistent for large-scale inhomogeneous systems, filling a gap in this area of theoretical research and justifying their application in practice.
\end{abstract}

\begin{IEEEkeywords}
Hyperparameter Estimation, Hierarchical Bayesian Model, Complex Network Dynamical System, Consistency, Network Population Size, Weakly-Dependent.
\end{IEEEkeywords}



\section{Introduction} \label{sec:introduction}
Dynamical systems are widely used for modeling realistic phenomena across diverse fields including physics, biology and social science \cite{temam2012infinite, pastor2015epidemic, vega2007complex, castellano2009statistical, smith2003development, vogels2005neural, tyson2001network, snijders2001statistical}. An important class is the complex network dynamical system, which describes the evolution of interconnected nodes \cite{strogatz2001exploring, boccaletti2006complex, vogels2005neural, tyson2001network, snijders2001statistical, pastor2015epidemic, vega2007complex, keeling2005networks, albert2002statistical, newman2003structure}. The applications of this framework are vast. For example, the Susceptible-Infected-Resistant (SIR) model and its variants are used to study the spread of infectious diseases \cite{anderson1991infectious, daley1964epidemics, boccaletti2006complex, kermack1927contribution, cao2022mepognn, hethcote2000mathematics, keeling2005networks} such as Covid-19 \cite{he2020seir, basnarkov2021seair, ghostine2021extended}, and have even been adapted for cybersecurity \cite{lu2013optimizing, xu2019cybersecurity}. In neuroscience, brain activities are modeled as a complex network of evolving neuron states to analyze brain functions \cite{deary2010neuroscience, david2004evaluation, lu2024imitating, peyrache2012spatiotemporal, deco2012ongoing}. Obtaining accurate information from complex network dynamical systems is crucial for understanding their function, informing decision-making, and controlling their behavior \cite{morris2021optimal, eshghi2015optimal, van2011decreasing, anderson1991infectious}. For this purpose, a widely utilized method is to describe the dynamical system with a parameterized model and then estimate the system parameters from recorded observations. For a complex network dynamical system with $N$ nodes, we denote the nodes' states at time $t$ as $\{\rvs_i(t)\}_{i=1,\dots, N}$, and the parameters associated with each node as $\theta_1, \dots, \theta_N$. The observations are typically gathered via a sensor function $f$, applied to a population of nodes:
\begin{equation}
    \rvy_{\gV}(t) = f\left(\{\rvs_i(t)\}_{i \in \gV}\right), \qquad 1\le t \le T, \quad \gV \in \mathrm{Pop}(\{1, \dots, N\})
\end{equation}
where $\mathrm{Pop}(\{1, \dots, N\})$ is a complete and disjoint decomposition of the node index set $\{1, \dots, N\}$, referring to all populations, that is, 
\begin{equation*}
    \mathrm{Pop}(\{1, \dots, N\}) = \{\gV^1, \dots, \gV^k\}, \text{ with } \bigcup_{i=1}^k \gV^i = \{1, \dots, N\}, \text{ and } \gV^i \cap \gV^j = \emptyset  \text{ for } i\neq j,
\end{equation*}
while $\gV$ is one of the population of the whole system nodes, and $\rvy_{\gV}(t)$ is the corresponding observation from population $\gV$. There exist many research works focusing on estimating the system parameters via observations \cite{hogg1967some, winship1999estimation, neyman1948consistent, kar2012distributed, zhang2004impacts}. In data-rich scenarios, for instance, where each node can be observed individually or when multiple experimental replications are available, a variety of robust algorithms exist for parameter estimation, including classical methods like Maximum Likelihood estimation (MLE) \cite{neyman1948consistent, richards1961method, firth1993bias}, Least Square estimation (LSE) \cite{schmiester2021efficient, robins1994estimation, le2022linear}, and modern approaches based on neural networks \cite{nolan2021machine, chen2020machine, kutz2023machine, lober2024predictive, wei2025estimating, lakshminarayanan2024parameter, almanstotter2025pinnverse}. However, when handling real-world data modeled by a complex network dynamical system, the number of the parameters to be estimated ($N$) is often much larger than the size of the observable data. This high dimensionality of parameter space makes the simultaneous and accurate estimation of all individual parameters $\theta$ practically infeasible. One approach to overcome this challenge is to set the network system homogeneous, that is, to set all node parameters to be identical \cite{jiang2015defining, shi2019totally, bogacz2006homogeneous, deco2018perturbation}. However, the homogeneous setting could be far from the reality where the nodes are different from each other. The inhomogeneous setting is more reasonable in modeling, but estimating a huge amount of inhomogeneous parameters at once also faces many challenges, for example, over-fitting effect \cite{ruiz2013estimating}. However, in complex network dynamical system, the exact value of each parameter $\theta_i$ is frequently less interesting than the distribution of the parameters. A more powerful approach shifts the focus from estimating each parameter individually to estimating the distribution from which they are drawn. This is the central premise of the hierarchical Bayesian model. Hierarchical Bayesian model provide a framework to model parameters as random variables drawn from a probability distribution that is itself governed by a smaller set of hyperparameters \cite{allenby2006hierarchical, shiffrin2008survey, berliner1996hierarchical}. For a population $\gV$, the parameters $\{\theta_i\}_{i\in \gV}$ are modeled as samples from a distribution parameterized by hyperparameter $\rvh_{\gV}$, that is,
\begin{equation}
    \theta_i \sim q(\cdot \mid \rvh_{\gV}), \qquad i \in \gV,
\end{equation}
By focusing on the hyperparameters $\{\rvh_{\gV}\}_{\gV \in \mathrm{Pop}(\{1, \dots, N\})}$, which dominating the system's behavior, we can construct a reasonable and tractable model for a large, complex network dynamical system with limited observations. Owing to this advantage, hierarchical Bayesian model has wide applications among broad areas, such as inverse problems \cite{knapik2016bayes, stuart2010inverse, agapiou2013posterior, knapik2013bayesian, knapik2011bayesian, malinverno2004expanded}, biology \cite{gao2008gaussian, lesaffre2012bayesian}, neuroscience \cite{zhao2019bayesian, molloy2018hierarchical}, engineering and other areas \cite{lord2008effects, miranda2013bayesian, farid2017application, liang2020hierarchical, wikle2003hierarchical, rouder2005introduction, berliner1996hierarchical}.

As complex network dynamical systems scale in size, the size of each population $\gV$ grows, and a critical concern arises that the hyperparameter estimations may diverge from their true values, rendering the underlying model unreliable. It is therefore significant to establish a theoretical guarantee for the consistency of hyperparameter estimation with respect to the growing network population size. While extensive theoretical work, such as \cite{neyman1948consistent, barndorff1983formula, fahrmeir1985consistency, robins1994estimation, zhan2024general, saumard2017concentration}, addresses consistency for classical parameter estimation methods like MLE and LSE in data-rich scenarios, corresponding theory for hierarchical Bayesian estimation are less developed. Existing studies have explored the consistency of hierarchical Bayesian methods in the context of inverse problems. For instance, \cite{knapik2016bayes} and \cite{dunlop2020hyperparameter} studied the consistency of hierarchical Bayesian estimation with respect to the order of function used for approximating the inverse problem, but the crucial question of consistency with respect to the size of the parameter population remains largely unaddressed. 

Our work aims to fill this research gap by developing a rigorous theoretical framework to analyze the consistency of hyperparameter estimation in large-scale complex network dynamical systems. We considers hyperparameters governing the distributions of both the system's dynamic parameters and its initial states. By formulating the system's evolution from a measure transport perspective \cite{marzouk2016sampling, villani2008optimal}, we demonstrate how hyperparameter information is encoded into the nodes' states and, consequently, into the observations. We focus on mean-type observations, which appear as an average of certain nodes' features and are commonly used in many scientific domains \cite{peyrache2012spatiotemporal, buzsaki2012origin, linden2011modeling, david2004evaluation, da2013eeg,shaman2012forecasting, shaman2013real}. Our primary contribution is a nonasymptotic bound on the estimation error during a fixed observation window, which proves that the estimate converges to the true hyperparameter as the population size grows. In this paper, we establish this consistency result not only for systems with independent nodes but also extend it to the more general and realistic case of weakly-dependent nodes. We validate our theoretical findings through numerical experiments on two representative models: a Susceptible-Infected-Susceptible (SIS) model \cite{shaman2012forecasting, shaman2013real, rasmussen2011inference, yang2014comparison} and a spiking neuronal network (SNN) model \cite{zhang2024framework, lu2024simulation, lu2024imitating}. The numerical results demonstrate that the empirical estimation error decreases as the network size increases, in precise agreement with our theoretical guarantees. By providing this foundational support, our work ensures that practical methods like data assimilation \cite{zhang2024framework, law2015data, evensen2002sequential, lu2024simulation, ruiz2013estimating} remain statistically robust when applied to large-scale, inhomogeneous, weakly-dependent complex network dynamical systems. Furthermore, our framework is also applicable for Nonlinear Least Squares (NLS) estimation, a topic detailed in the Appendix \ref{sec:appendix}. The fundamental proof routine of the main results is inspired by concentration-of-measure argument \cite{devroye2013probabilistic, shalev2014understanding, vershynin2018high}. The work of \cite{escande2024concentration} derived similar theoretical results on independent sampling case, but the more challenging setting of large-scale, inhomogeneous, weakly-dependent network dynamics is not studied. Based on the general concentration results and an independent approximation lemma proposed by Rio \cite{rio2013inequalities}, our work extends the results to the inhomogeneous, weakly-dependent setting.

The remainder of this paper is organized as follows. In Section \ref{sec:problem_formulation}, we formally formulate the hyperparameter estimation problem for complex network dynamical systems in the perspective of hierarchical Bayesian modeling. Section \ref{sec:main_results} presents our main theoretical contributions. We begin by introducing essential preliminary concepts and technical lemmas in Section \ref{sec:preliminary}, then establish our first consistency guarantee for systems with independent and identically distributed (i.i.d.) nodes in Section \ref{sec:EstimationIID}. Following this, we present our core result in Section \ref{sec:EstimationWD}: the nonasymptotic guarantee for the setting of weakly-dependent nodes, for which we derive an explicit nonasymptotic error bound. In Section \ref{sec:Experiments}, we provide empirical validation for our theoretical claims by conducting numerical experiments on two representative systems: a Susceptible-Infectious-Susceptible (SIS) model and a Spiking Neuronal Network (SNN) model. Finally, Section \ref{sec:conclusion} concludes the paper by summarizing our findings and discussing their broader implications for the statistical modeling of large-scale systems. An Appendix \ref{sec:appendix} is also provided to detail the connection between our framework and classical Nonlinear Least Squares (NLS) estimation.

\section{Problem Formulation}\label{sec:problem_formulation}
In many complex network dynamical systems, the collective behavior is governed by node-specific parameters. Hierarchical Bayesian model provides a framework to model these inhomogeneous parameters as being drawn from a probability distribution that is itself governed by a set of unknown hyperparameters. Estimating these hyperparameters is key to understanding, simulating, and predicting the system's dynamics. A dynamical system can be formally described in either continuous or discrete time:


\begin{equation}\label{eq:dynSysGenODE}
    \begin{matrix}
            \left\{\begin{aligned}
                \frac{\df \rvs_i(t)}{\df t} &= g_i(\rvs_1(t), \dots, \rvs_n(t); \theta_1, \dots, \theta_n) \\
                \rvs_i(0) &\sim p_{0}(s\mid \rvh_{\mathrm{prob}}), \\
                \theta_i &\sim q(\cdot \mid \rvh_{\mathrm{dyn}}),
            \end{aligned}\right. &&\quad i = 1, \dots, n, \quad t \ge 0.
        \end{matrix} 
\end{equation}
In applications, the SNN model is often described in continuous time form (\ref{eq:dynSysGenODE}), while the SIS model is typically formulated in discrete time form (\ref{eq:dynSysGenDis}):
\begin{equation}\label{eq:dynSysGenDis}
    \begin{matrix}
            \left\{\begin{aligned}
                \rvs_i(t) &= g_i(\rvs_1(t-1), \dots, \rvs_n(t-1);  \theta_1, \dots, \theta_n), \text{ for } t = 1, 2, \dots\\
                \rvs_i(0) &\sim p_{0}(s\mid \rvh_{\mathrm{prob}}), \\
                \theta_i &\sim q(\theta \mid \rvh_{\mathrm{dyn}}), 
            \end{aligned}\right. &&\quad i = 1, \dots, n.
        \end{matrix} 
\end{equation}
In the above systems \ref{eq:dynSysGenODE} and \ref{eq:dynSysGenDis}, $\rvs_i(t)\in \gS \subset \R^k$ is the state of the $i$-th node at time $t$, and $g_i$ is the function governing the dynamics of the $i$-th node, which is determined by parameters $\theta = (\theta_1, \dots, \theta_n) \in \Theta$. And the parameters are independently sampled from a distribution $q(\theta \mid \rvh_{\mathrm{dyn}})$ determined by a hyperparameter $\rvh_{\mathrm{dyn}} \in \gH_{\mathrm{dyn}} \subset \R^{h_{\mathrm{dyn}}}$, and $p_{0}(s\mid \rvh_{\mathrm{prob}})$ is the initial distribution of the nodes' states, which is determined by a hyperparameter $\rvh_{\mathrm{prob}} \in \gH_{\mathrm{prob}} \subset \R^{h_{\mathrm{prob}}}$. Denote the dimension of the whole hyperparameter space $\gH_{\mathrm{dyn}} \times \gH_{\mathrm{prob}}$ as $h \triangleq h_{\mathrm{dyn}} + h_{\mathrm{prob}}$. The dynamics hyperparameter $\rvh_{\mathrm{dyn}}$ and the probability hyperparameter $\rvh_{\mathrm{prob}}$ are unknown and need to be estimated. Denote the system states at time $t$ as $\rmS_n(t) = (\rvs_1(t), \dots, \rvs_n(t))$. An $r_t$-dimensional observation of the system is often a result of an operator on the entire system state, that is, $\rvy_t = f_t(\rmS_n(t)) \in \R^{r_t}$ with $f_t:\gS^n \to \R^{r_t}$ being the operator at time $t$, where $r_t > 0$ represents the dimension of the observation space. In this work, we focus on the continuous time case (\ref{eq:dynSysGenODE}) for clarity of presentation, while the discrete time case (\ref{eq:dynSysGenDis}) can be treated similarly with proper continuous approximation schemes.

From the perspective of the measure transport map, we can connect the hyperparameters to the system's evolution by expressing the distribution of the coupling of the system states and parameters at any time $t$ with the measure transport map \cite{marzouk2016sampling, villani2008optimal}. That is, under suitable conditions, see the regularity of Monge-Ampere equation \cite{villani2008optimal, loeper2009regularity}, there exists a proper map $G_t: \gS^n \to \gS^n$ such that
\begin{align}\label{eq:dynTrans}
        \left(\rmS_n(t)\mid \theta\right) =& G_t(\rmS_n(0); \theta),\\
        p_t(\mS, \theta) =& p_0(G_t^{-1}(\mS; \theta) \mid \rvh_{\mathrm{prob}}) \cdot \det(\nabla G_t^{-1}(\mS; \theta)) q^n(\theta \mid \rvh_{\mathrm{dyn}}),
\end{align}
where $p_t$ is the joint probability distribution density function of the states $\rmS_n(t)$ and parameters $\theta$. By integrating out the other nodes' states and the parameters in (\ref{eq:dynTrans}), we obtain the marginal distribution of the state of a single node at time $t$, that is, 
\begin{equation}
    p_{t,i}(s) = \int_{\sR^{n-1} \times \Theta} p_0(G_t^{-1}(\mS; \theta) \mid \rvh_{\mathrm{prob}}) \cdot \det(\nabla G_t^{-1}(\mS; \theta)) q^n(\theta \mid \rvh_{\mathrm{dyn}}) \df \mS_{-i} \df \theta, \text{ for } i=1,\dots,n,
\end{equation}
where $\mS = (s_1, \dots, s_n)$ and $\mS_{-i} = (s_1, \dots, s_{i-1}, s_{i+1}, \dots, s_n)$. Notice that this marginal distribution $p_{t,i}(s)$ is a function parameterized by the true hyperparameters $\rvh_{\mathrm{prob}}$ and $\rvh_{\mathrm{dyn}}$, which means that the marginal distribution of each state at time $t$ is determined by the initial distribution and the dynamics. This explicit dependence is the foundation of our estimation framework. We will henceforth denote this hyperparameter-dependent marginal distribution of the $i$-th state at time $t$ as $\tilde{p}_{t,i}(s \mid \rvh_{\mathrm{prob}}, \rvh_{\mathrm{dyn}})$.

Since the distribution $\tilde{p}_{t,i}$ depends on the hyperparameters, the states $\rmS_n(t)$ for $t\ge 1$ contain the information of $\rvh_{\mathrm{prob}}$ and $\rvh_{\mathrm{dyn}}$. To formalize this connection, we first combine the dynamics and initial state hyperparameters into a single vector that $\rvh \triangleq (\rvh_{\mathrm{prob}}, \rvh_{\mathrm{dyn}})$, which lies in the hyperparameter space $\gH \triangleq \gH_{\mathrm{prob}}\times \gH_{\mathrm{dyn}} \subset \R^{h_{\mathrm{prob}} + h_{\mathrm{dyn}}}$. We then leverage the Rosenblatt transformation theorem \cite{rosenblatt1952remarks, lebrun2009innovating, liu1986multivariate} to establish a direct functional relationship between the hyperparameters and the system states. The theorem guarantees the existence of a transformation $\psi_t: \gX^n \times \gH \to \gS^n$ which maps a random variable $\rmX_n(t) = (\rvx_1, \dots, \rvx_n) \in \gX^n$, where $\rvx_i \in \gX \subset \R^k$, drawn from a known base distribution $\mu$, for example, the uniform distribution, to the system states $\rmS_n(t)$, provided both distributions are absolutely continuous with respect to the Lebesgue measure. That is, for $t\ge 1$, we have
\begin{equation}\label{eq:Rosenblatt}
    \begin{aligned}
        \rmS_n(t) =& \psi_t(\rmX_n(t), \rvh), \qquad \rmX_n(t) \sim \mu.\\
        \rvs_i(t) =& \psi_{t,i}(\rvx_i(t), \rvh), \qquad i = 1, \dots, n.
    \end{aligned}
\end{equation}
In this work, we focus on the case where the node states are identically distributed at any given time, i.e., $\tilde{p}_{t,i}(s \mid \rvh_{\mathrm{prob}}, \rvh_{\mathrm{dyn}}) = \tilde{p}_{t,j}(s \mid \rvh_{\mathrm{prob}}, \rvh_{\mathrm{dyn}})$ for all $1\le i, j \le n$ and $1\le t \le T$. Under this assumption, the transformation $\psi_{t,i}$ can be simplified to a component-wise form, where a single function $\psi_{t,i} = \tilde{\psi}_t$ for all $1\le i \le n$, where $\tilde{\psi}_t: \gX \times \gH \to \gS$, is applied to each component of the underlying random variable: $\psi_t(\rmX_n(t), \rvh) = (\tilde{\psi}_{t}(\rvx_1(t), \rvh), \dots, \tilde{\psi}_{t}(\rvx_n(t), \rvh))$. A key property of this structure is that it preserves the dependency between base variables $\rmX_n(t)$ from the system states $\rmS_n(t)$ \cite{lebrun2009innovating, lebrun2009generalization}. 

Thus, the observation can be written as $\rvy_t(\rvh) =  f_t(\psi_t(\rmX_n(t), \rvh))$, where we write $\rvh$ explicitly to stress the dependence on the hyperparameters. It is important to note that the time-dependence of $\rmX_n(t)$ stresses that the realization of $\rmX_n(t)$ at time $t$ is not necessarily identical to $\rmX_n(0)$, while its underlying distribution $\mu$ remains time-invariant.

We concatenate the sequence of observations till time $t$ into a single observation vector:
\begin{equation}
    \rmY(\rvh) \triangleq (\rvy_1(\rvh), \dots, \rvy_T(\rvh)) \in \R^{r_1}\times \cdots \times \R^{r_T}.
\end{equation}
A standard approach for estimating the hyperparameter $\rvh$ is to minimize an empirical objective function, which measures the discrepancy between observations simulated with a candidate hyperparameter, $\rmY(\rvh)$, and the true observations recorded, $\rmY^\star = \left(\rvy_1^\star, \dots, \rvy_T^\star\right)$, which is assumed to be generated by $\rvy^\star_t = \hat{f_t}(\rmS^\star(t))$, where $\rmS^\star(t)$ is the system states evolved given true hyperparameter $\rvh^\star$ with noisy observation operator $\hat{f_t}:\gS^n \times \R^{n\times r_t} \to \R^{r_t}$. This objective function is typically defined using a distance metric $\gD$ in M-estimation \cite{tyler1987distribution, zhang2014novel, bellec2025error, maronna1976robust}:
\begin{equation}
    L_n(\rvh) = \gD(\rmY(\rvh), \rmY^\star).
\end{equation}
The expectation of the empirical objective function $L_n$ over sample distribution is denoted as its deterministic form $L^\star: \gH \to \R$. In data-rich scenario, the deterministic objective function $L^\star$ is minimized at the true hyperparameter $\rvh^\star$, that is,
\begin{equation}
    L^\star(\rvh^\star) = \min_{\rvh \in \gH} L^\star(\rvh).
\end{equation}

This M-estimator framework is general and encompasses many classical estimation problems. For example, in Least Squares Estimation (LSE), the empirical objective function is the average of squared residuals \cite{schmiester2021efficient, robins1994estimation, le2022linear}, while in Maximum Likelihood Estimation (MLE), it is the negative average log-likelihood \cite{neyman1948consistent, richards1961method, firth1993bias}. Such tasks can be viewed as a special case of our dynamical systems framework by considering a single-node system with \textit{resample and transform} dynamics:
\begin{equation}\label{eq:dynSysLS}
    \begin{matrix}
        \left\{\begin{aligned}
            \rvs(t) &= f(\rvx(t); \theta), \\
            \rvx(t) & \sim p(x), \\
            \theta &\sim \delta_{\rvh_{\mathrm{dyn}}}(\theta),
        \end{aligned}\right. \quad t = 1, \dots, T.
    \end{matrix}
\end{equation}
where $\delta_{\rvh_{\mathrm{dyn}}}(\cdot)$ is the Dirac-delta function at $\rvh_{\mathrm{dyn}}$. In this formulation, a new input $\rvx(t)$ is sampled and transformed by a time-invariant function $f$ at each time step, and the observation is mapped by an identity operator $\rvy_t = \rvs(t)$. The only hyperparameter to be estimated is the dynamics hyperparameter $\rvh = \rvh_{\mathrm{dyn}}$. In the single-node setting described above, an extensive body of literature, such as \cite{neyman1948consistent, barndorff1983formula, fahrmeir1985consistency, robins1994estimation, zhan2024general, saumard2017concentration}, has thoroughly studied the consistency of the estimator with respect to the observation duration $T$, which is referred to as the sample size. The Artificial Neural Networks (ANNs), such as CNN, RNN, transformer and so on, are widely used nowadays in the area of artificial intelligence \cite{rumelhart1986learning, lecun1989backpropagation, krizhevsky2017imagenet, jordan1997serial, vaswani2017attention}. Some works have framed the ANNs as a dynamical systems, where the forward propagation of information through layers is treated as a temporal evolution \cite{haber2017stable, chen2018neural}. From this point of view, the dynamics of ANNs can be describe as (\ref{eq:dynSysLS}). The central claim of our paper is that for the general multi-node complex network dynamical systems defined in (\ref{eq:dynSysGenODE}), the population size $n$ plays a role analogous to the sample size $T$ in (\ref{eq:dynSysLS}). We prove that for a fixed observation duration, consistency is achieved as the population size becomes sufficiently large. For the classical case, we provide a proof for the consistency of NLS estimation in the Appendix \ref{sec:appendix}.

In the following, we formulate the hyperparameter estimation problem of a general dynamical system (\ref{eq:dynSysGenODE}) with $L^2$ loss as follow:

\begin{problem}[\textbf{Hyperparameter Estimation via Observation of Complex Network Dynamical System}]\label{defn:ProbGen}
    For a complex network dynamical system (\ref{eq:dynSysGenODE}), with the notations described above, an algorithm $\gA$ is performed to obtain estimation $\hat{\rvh}_n = \gA(L_n)$ for true hyperparameter $\rvh^\star$ by minimizing an empirical objective function $L_n$ merging the observation that
    \begin{equation}
        L_n(\rvh) = \frac{1}{T}\|\rmY(\rvh) - \rmY^\star\|^2.
    \end{equation}
    where the norm is a mixture of Euclidean norm in the product space $\R^{r_1} \times \cdots \times \R^{r_T}$ that $\|\rmY(\rvh) - \rmY^\star\|^2 = \|\rvy_1(\rvh) - \rvy_1^\star\|^2_{\R^{r_1}} + \cdots + \|\rvy_T(\rvh) - \rvy_T^\star\|^2_{\R^{r_T}}$. For simplicity, we omit the subscript $\R^{r_t}$ in the norm notation $\|\cdot\|_{\R^{r_t}}$ without ambiguity.
\end{problem}

\begin{remark}
In most applications, the observations at each time are of the same type that $r_1=\cdots=r_T$, $f_1 = \cdots = f_T$ and $\tilde{f}_1 = \cdots = \tilde{f}_T$. However, in some partially observable situations, the type of observations change at different time $t$, thereby they are described in different space $\R^{r_t}$ with different dimmension $r_t$. This setting enlarges the generality of observations' type used for the estimation problem.
\end{remark}

The observation duration $T$ and the network population size $n$ are two critical factors in the consistency theory of hyperparameter estimation problem. Consistency with respect to the observation duration $T$ can be formulated as a classical Nonlinear Least Squares Estimation problem, which is well-studied in the literature \cite{neyman1948consistent, barndorff1983formula, fahrmeir1985consistency, robins1994estimation, zhan2024general, saumard2017concentration}. In contrast, our analysis focuses on the less-explored problem of consistency with respect to the population size $n$ for a fixed observation duration $T$, demonstrating how the convergence of the estimator $\hat{\rvh}_n$ to the true hyperparameter $\rvh^\star$ is controlled by the network population size $n$. 

Our primary contribution is to establish this consistency with respect to $n$ in the challenging and highly relevant setting of weakly-dependent nodes. This is an extension beyond existing works, which have typically been limited to either systems with independent and identically distributed (i.i.d.) nodes \cite{escande2024concentration} or classical setting where consistency is analyzed with respect to sample sizes as in (\ref{eq:dynSysLS}). Since networks in real world are defined by their interconnections, the assumption of nodes' states evolving independently is often unrealistic. Our analysis confronts this complexity by providing a theoretical guarantee for systems where dependencies exist but decay across the network \textit{distance}. While we first establish a baseline result for the i.i.d. case to build our framework, our main theorem extends this guarantee to the more general and practical weakly-dependent setting. In the following sections, we will introduce the necessary preliminary definitions and lemmas before presenting our main results. We will first detail the consistency of the estimate $\hat{\rvh}_n$ for the Problem \ref{defn:ProbGen} in the i.i.d. setting and then present our core finding for the more general case of weakly-dependent nodes.

\section{Main Results}\label{sec:main_results}
\subsection{Preliminary}\label{sec:preliminary}

Firstly, we make some assumptions on the dynamical system.
\begin{assum}\label{assum:dynSysLip}
    The dynamics $g_i$ in the dynamical system (\ref{eq:dynSysGenODE}) is Lipschitz continuous with respect to all nodes.
\end{assum}

\begin{assum}\label{assum:dynSysCompactMeasure}
    In (\ref{eq:dynSysGenODE}), the initial distributions $p_{0}(s\mid \rvh_{\mathrm{prob}})$ and $q(\theta \mid \rvh_{\mathrm{dyn}})$ characterized by the hyperparameters satisfy the following conditions:
    \begin{enumerate}
        \item the initial state distribution $p_{0}(s\mid \rvh_{\mathrm{prob}})$ and the parameter distribution $q(\theta \mid \rvh_{\mathrm{dyn}})$ are supported on compact set, that is, $\supp\left(p_{0}\right) = \left\{s\mid p_{0}(s\mid \rvh_{\mathrm{prob}}) \neq 0\right\}$ and $\supp\left(q\right) =  \left\{\theta \mid q(\theta \mid \rvh_{\mathrm{dyn}}) \neq 0\right\}$ are compact sets in $\gS$ and $\Theta$, respectively,\\
        \item the probability density functions $p_{0}$ and $q$ are Lipschitz continuous with respect to the hyperparameters $\rvh_{\mathrm{prob}}$ and $\rvh_{\mathrm{dyn}}$, respectively.
    \end{enumerate}  
\end{assum}

The Assumption \ref{assum:dynSysLip} is an essential condition in the elementary study of dynamical systems, which guarantees the solution's continuous dependence on the initial conditions and parameters \cite{arnold1992ordinary}. Combined with Assumption \ref{assum:dynSysCompactMeasure}, it guarantees the existence of Lipschitz continuous measure transport map $\psi_t$ in (\ref{eq:Rosenblatt}).

In many practical applications, the observation operator $f$ takes the form of nonlinear average of the states, for example, in the Susceptible-Infectious-Susceptible (SIS) model, the observation, infection rate, is the average of the infected states; in the Spiking Neural Network (SNN) model, the observation, Local Field Potential (LFP), is the average of the neuron's membrane voltage \cite{kermack1927contribution}. Therefore, in this paper, we consider the observation operator $f_t$ of average form.
\begin{assum}[Observation operator]\label{assum:obsOperator}
    The observation operator $f_t$ is an operator, that is, for every $t\ge 1$, we have
    \begin{equation}
        \rvy_t(\rvh) = f_t(\rmS_n(t)) = \frac{1}{n}\sum_{i=1}^n \phi_t(\rvs_i(t)) ,
    \end{equation}
    where $\phi_t: \gS \to \R^{r_t}$.
\end{assum}
Combined with the above transport map $\tilde{\psi}_t$, the observation can be rewritten as $\rvy_t(\rvh) = \frac{1}{n}\sum_{i=1}^n \varphi_{t}(\rvx_i(t)(t),\rvh)$, where $\varphi_{t} \triangleq \phi_t \circ \tilde{\psi}_{t}$.

Then, we make an assumption on the optimization algorithm $\gA$.

\begin{assum}[Optimization algorithm]\label{assum:minimizer}
    The optimization algorithm can find the minimizer of $L_n(\rvh)$ for all  $\rmX_n \in \gX^n$ and $n \in \sN$. That is, the algorithm $\gA$ takes the objective function $L_n(\rvh)$ as input and outputs an estimator $\hat{\rvh}_n$, which satisfying
    \begin{equation}
        L_n(\hat{\rvh}_n) = \min_{\rvh \in \gH} L_n(\rvh), \text{ where } \hat{\rvh}_n = \gA(L_n(\rvh)).
    \end{equation}
\end{assum}

\begin{remark}
    The assumption for the algorithm can be relaxed to that the algorithm can find an approximate minimizer $\hat{\rvh}_n$ of $L_n$, that is,
    \begin{equation}\label{assum:approx_minimizer}
        L_n(\hat{\rvh}_n) \le L_n(\rvh^\star).
    \end{equation}
    In the following analyses, we take the exact minimizer condition Assumption \ref{assum:minimizer} for simplicity, while all these proofs can be conducted similarly under the approximate minimizer condition (\ref{assum:approx_minimizer}). 
\end{remark}

\begin{remark}
    The existence of the minimizer can be guaranteed by several mild conditions. For example, if $\rvh \mapsto L_n(\rvh)$ is lower semi-continuous for all $\rmS_n \in \gS^n$ and $\gH$ is compact, then a minimizer $\hat{\rvh}_n$ exists for all $\rmS_n \in \gS^n$ and at least one measurable minimizer exists under some additional conditions \cite{hess1996epi, pfanzagl1994parametric, vaart1997weak}.
\end{remark}

Only the ability of algorithm for finding the minimizer of objective function is not enough to guarantee the consistency of the estimator, because the objective function may have multiple minimizers. The deterministic objective function related to the estimation problem should be identifiable at the true hyperparameter $\rvh^\star$. We give the definition of identifiability and strictly identifiability as follows.

\begin{defn}\label{identifiable}
    An estimation problem or its corresponding deterministic objective function $L^\star: \gH \to \R$ is \textbf{identifiable}, if $L^\star$ is uniquely minimized at $\rvh = \rvh^\star$.

    Suppose $d:\gH \times \gH \to \R^+ \cup \{0\}$ is a metric on $\gH$. We say that an estimation problem or its corresponding deterministic objective function $L^\star: \gH \to \R$ is \textbf{strictly identifiable}, if $L^\star$ is identifiable at the unique minimizer $\rvh^\star$ and 
        \begin{equation}
            \inf_{d(\rvh, \rvh^\star) \ge \eps} L^\star(\rvh) > L^\star(\rvh^\star), \qquad \forall \eps > 0.
        \end{equation}  
\end{defn}

The identifiability condition is crucial and general for the consistency of the estimator \cite{white2012consistency}. Without this condition, it is not guaranteed for an estimation algorithm to find the true hyperparameter $\rvh^\star$ even if the deterministic objective function $L^\star$ is provided and the algorithm is effective as defined in Assumption \ref{assum:minimizer}.

The following lemma shows that an identifiable and continuous objective function within compact hyperparameter space is strictly identifiable. 

\begin{lemma}\label{le:identifiable}
    If the objective function $L^\star: \gH \to \sR$ is identifiable and continuous with respect to $\rvh$, and $\gH$ is compact, then $L^\star$ is strictly identifiable at the unique minimizer $\rvh^\star$.
\end{lemma}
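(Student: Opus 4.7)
The plan is to use the compactness of $\gH$ together with continuity of $L^\star$ to replace the infimum by a minimum, and then invoke identifiability to rule out equality. Concretely, fix an arbitrary $\eps > 0$ and set
\[
K_\eps \;=\; \{\rvh \in \gH : d(\rvh, \rvh^\star) \ge \eps\}.
\]
If $K_\eps$ is empty, the infimum is $+\infty$ by convention, which is trivially strictly greater than $L^\star(\rvh^\star)$, so assume $K_\eps \neq \emptyset$. Since $d(\cdot, \rvh^\star)$ is continuous in any metric, $K_\eps$ is closed; being a closed subset of the compact set $\gH$, it is itself compact.

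Next I would apply the extreme value theorem: because $L^\star$ is continuous on the compact set $K_\eps$, it attains its infimum there, i.e., there exists $\rvh_\eps \in K_\eps$ such that
\[
L^\star(\rvh_\eps) \;=\; \inf_{\rvh \in K_\eps} L^\star(\rvh) \;=\; \inf_{d(\rvh, \rvh^\star) \ge \eps} L^\star(\rvh).
\]
Since $d(\rvh_\eps, \rvh^\star) \ge \eps > 0$, we have $\rvh_\eps \neq \rvh^\star$. Identifiability of $L^\star$ (Definition \ref{identifiable}) says $\rvh^\star$ is the \emph{unique} minimizer, so any $\rvh \neq \rvh^\star$ satisfies $L^\star(\rvh) > L^\star(\rvh^\star)$; in particular $L^\star(\rvh_\eps) > L^\star(\rvh^\star)$. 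Chaining these together gives
\[
\inf_{d(\rvh, \rvh^\star) \ge \eps} L^\star(\rvh) \;=\; L^\star(\rvh_\eps) \;>\; L^\star(\rvh^\star),
\]
which is precisely the strict identifiability condition, and since $\eps > 0$ was arbitrary, the proof is complete.

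There is no real obstacle here: the only subtlety is the edge case $K_\eps = \emptyset$, which is handled by the convention $\inf_\emptyset = +\infty$ (or by noting that if $\gH$ has diameter less than $\eps$ then the claim is vacuous for that $\eps$). The rest is a direct two-line chain: \emph{closed-subset-of-compact is compact}, then \emph{continuous function on compact attains its infimum}, then \emph{identifiability gives strict inequality at any non-minimizer}.
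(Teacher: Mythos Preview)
Your proof is correct and follows essentially the same approach as the paper: show the set $\{d(\rvh,\rvh^\star)\ge\eps\}$ is compact, use continuity to attain the infimum at some point $\tilde{\rvh}$, and then invoke the unique-minimizer property to get the strict inequality. Your version is slightly more careful in explicitly handling the case $K_\eps=\emptyset$, which the paper omits.
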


\begin{proof}
    Since $\gH$ is compact, for $\rvh \in \gH$, we have
    \begin{equation}
        \left\{d(\rvh, \rvh^\star) \ge \eps \right\} = \gH \setminus \left\{d(\rvh, \rvh^\star) < \eps \right\} 
    \end{equation}
    is a compact subset of $\gH$. Thus the infimum $\inf_{d(\rvh, \rvh^\star) \ge \eps} L^\star(\rvh)$ can be attained at some $\tilde{\rvh} \in \left\{d(\rvh, \rvh^\star) \ge \eps \right\}$ by the continuity of $L^\star$. Recall that $\rvh^\star$ is the unique minimizer of $L^\star(\rvh)$, thus
    \begin{equation}
        \inf_{d(\rvh, \rvh^\star) \ge \eps} L^\star(\rvh) = L^\star(\tilde{\rvh}) > L^\star(\rvh^\star).
    \end{equation}
    That means that $L^\star(\rvh)$ is strictly identifiable.
\end{proof}

To obtain the consistency of the estimator, we leverage the fact from M-estimator \cite{tyler1987distribution, zhang2014novel, bellec2025error, maronna1976robust} that the discrepancy between the true parameter $\rvh^\star$ and the estimator $\hat{\rvh}_n$ attained by an algorithm can be bounded by the difference between the empirical objective function $L_n$ and the deterministic objective function $L^\star$. To illustrate this statement, we have the following lemma.
\begin{lemma}\label{le:SIMarkov}
    Suppose the deterministic objective function $L^\star: \gH \to \R$ is strictly identifiable at the unique minimizer $\rvh^\star$, and $\hat{\rvh}_n$ is the estimator obtained by an algorithm satisfying the Assumption \ref{assum:minimizer}, then for any $\eps > 0$, we have 
    \begin{equation}\label{eq:SIMarkov}
        \gP\left(d(\hat{\rvh}_n, \rvh^\star) \ge \eps \right) \le \frac{2}{\eta(\eps)} \E \left[\sup_{\rvh \in \gH} \|L_n(\rvh) - L^\star(\rvh)\|\right],
    \end{equation}
    where $\eta(\eps)$ is a positive constant determined by $\eps$ and $L^\star$.
\end{lemma}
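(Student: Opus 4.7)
The plan is to follow the classical M-estimator consistency argument, which reduces a statement about the distance between the estimator $\hat{\rvh}_n$ and the truth $\rvh^\star$ to a statement about the uniform deviation of the empirical objective from its deterministic counterpart, after which Markov's inequality finishes the job. The key observation is that strict identifiability converts a gap in the parameter space into a quantitative gap in the objective value.

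First, I would define the modulus of identifiability
\begin{equation*}
    \eta(\eps) \;\triangleq\; \inf_{d(\rvh, \rvh^\star) \ge \eps} L^\star(\rvh) \;-\; L^\star(\rvh^\star),
\end{equation*}
which is strictly positive by the strict identifiability assumption. On the event $\{d(\hat{\rvh}_n, \rvh^\star) \ge \eps\}$, one immediately has $L^\star(\hat{\rvh}_n) - L^\star(\rvh^\star) \ge \eta(\eps)$.

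Next, I would perform the standard three-term decomposition
\begin{equation*}
    L^\star(\hat{\rvh}_n) - L^\star(\rvh^\star) \;=\; \bigl[L^\star(\hat{\rvh}_n) - L_n(\hat{\rvh}_n)\bigr] + \bigl[L_n(\hat{\rvh}_n) - L_n(\rvh^\star)\bigr] + \bigl[L_n(\rvh^\star) - L^\star(\rvh^\star)\bigr].
\end{equation*}
Assumption \ref{assum:minimizer} makes the middle bracket non-positive, while the first and third brackets are each bounded in absolute value by $\sup_{\rvh \in \gH} \|L_n(\rvh) - L^\star(\rvh)\|$. Therefore $L^\star(\hat{\rvh}_n) - L^\star(\rvh^\star) \le 2 \sup_{\rvh \in \gH} \|L_n(\rvh) - L^\star(\rvh)\|$ pointwise.

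Combining the two bounds, the event $\{d(\hat{\rvh}_n, \rvh^\star) \ge \eps\}$ is contained in the event $\{2 \sup_{\rvh \in \gH} \|L_n(\rvh) - L^\star(\rvh)\| \ge \eta(\eps)\}$. A direct application of Markov's inequality to the nonnegative random variable $\sup_{\rvh \in \gH} \|L_n(\rvh) - L^\star(\rvh)\|$ then yields the stated bound. The only nontrivial ingredient is verifying that $\eta(\eps) > 0$, which is precisely the content of strict identifiability, so no additional obstacle arises; the remaining work is purely mechanical and needs no regularity of $L_n$ beyond what has already been imposed on $L^\star$. The measurability of $\sup_{\rvh \in \gH} \|L_n(\rvh) - L^\star(\rvh)\|$, which is needed to take the expectation, can be obtained under the same compactness and continuity conditions already used in Lemma \ref{le:identifiable}.
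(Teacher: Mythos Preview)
Your proposal is correct and follows essentially the same route as the paper's proof: establish the objective-value gap $\eta(\eps)$ from strict identifiability, use the minimizer property to drop the middle term in the three-term decomposition, bound the remaining two terms by the uniform deviation, and finish with Markov's inequality. The only cosmetic difference is that you take $\eta(\eps)$ to be the full gap $\inf_{d(\rvh,\rvh^\star)\ge\eps}L^\star(\rvh)-L^\star(\rvh^\star)$, whereas the paper gives $\tfrac{1}{2}$ of this as an example; your choice is equally valid (indeed slightly sharper) since the lemma only asserts existence of some positive $\eta(\eps)$.
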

\begin{proof}
    Since $\inf_{d(\rvh, \rvh^\star) \ge \eps} L^\star(\rvh) > L^\star(\rvh^\star)$, there exists $\eta > 0$ such that 
    \begin{equation}
        \inf_{d(\rvh, \rvh^\star) \ge \eps} L^\star(\rvh) \ge L^\star(\rvh^\star) + \eta,
    \end{equation}
    Thus, when $d(\hat{\rvh}_n, \rvh^\star) \ge \eps$, we have
    \begin{equation}
        L^\star(\hat{\rvh}_n) \ge \inf_{d(\rvh, \rvh^\star) \ge \eps} L^\star(\rvh) \ge L^\star(\rvh^\star) + \eta.
    \end{equation}
    For example, we can choose $\eta(\eps) = \frac{1}{2}\left(\inf_{d(\rvh, \rvh^\star) \ge \eps} L^\star(\rvh) - L^\star(\rvh^\star)\right)$.
    Therefore, we have the contain relationship that
    \begin{equation}
        \begin{aligned}
            \left\{d(\hat{\rvh}_n, \rvh^\star) \ge \eps \right\} & \subset \left\{L^\star(\hat{\rvh}_n) \ge L^\star(\rvh^\star) + \eta\right\} \\
            &= \left\{L^\star(\hat{\rvh}_n) - L^\star(\rvh^\star) \ge \eta\right\}.
        \end{aligned}
    \end{equation}

    Then, observe that 
    \begin{equation}
        \begin{aligned}
            L^\star(\hat{\rvh}_n) - L^\star(\rvh^\star) &= L^\star(\hat{\rvh}_n) - L_n(\hat{\rvh}_n) + L_n(\hat{\rvh}_n) - L^\star(\rvh^\star) \\
            &\le L^\star(\hat{\rvh}_n) - L_n(\hat{\rvh}_n) + L_n(\rvh^\star) - L^\star(\rvh^\star) \\
            &\le \|L^\star(\hat{\rvh}_n) - L_n(\hat{\rvh}_n) + L_n(\rvh^\star) - L^\star(\rvh^\star)\| \\
            &\le \|L^\star(\hat{\rvh}_n) - L_n(\hat{\rvh}_n)\| + \|L_n(\rvh^\star) - L^\star(\rvh^\star)\| \\
            &\le 2 \sup_{\rvh \in \gH} \|L_n(\rvh) - L^\star(\rvh)\|.
        \end{aligned}
    \end{equation}
    The first inequality is due to Assumption \ref{assum:minimizer} of the algorithm that $\hat{\rvh}_n$ is the minimizer of $L_n(\rvh)$.

    Therefore, 
    \begin{equation}\label{eq:consistent}
            \gP\left(d(\hat{\rvh}_n, \rvh^\star) \ge \eps \right) \le \gP\left(\sup_{\rvh \in \gH} \|L_n(\rvh) - L^\star(\rvh)\| \ge \frac{\eta}{2}\right).
    \end{equation}

    Finally, by the Markov inequality, we have 
    \begin{equation}\label{eq:markovIneq}
        \gP\left(\sup_{\rvh \in \gH} \|L_n(\rvh) - L^\star(\rvh)\| \ge \frac{\eta}{2}\right) \le \frac{2}{\eta} \E \left[\sup_{\rvh \in \gH} \|L_n(\rvh) - L^\star(\rvh)\|\right].
    \end{equation}
\end{proof}

This property is denoted as stability in various researches \cite{rogers1978finite, kearns1997algorithmic, bousquet2002stability}.

The work turns out to bound the supremum of the empirical objective function and the deterministic objective function with respect to $\rvh \in \gH$.

To bound the right term of (\ref{eq:SIMarkov}), we use the well-known chaining method of Sub-Gaussian process to bound the expectation of the supremum. It is necessary to recall some definitions that will be useful in the following, which can be found in textbooks such as \cite{vershynin2018high}.

\begin{defn}[Sub-Gaussian Random Variable]\label{sub-GaussianV}
    A random variable $\rvx$ is called \textbf{sub-Gaussian} with \textbf{variance proxy} of $\sigma^2$ if $\E[\rvx]=0$ and 
    \begin{equation}
        \E\left[\ef^{\lambda \rvx}\right] \le \ef^{\lambda^2 \sigma^2 /2} \text{ for all } \lambda \in \R.
    \end{equation}
\end{defn}

\begin{props}
    Let $\rvx_1, \dots, \rvx_n$ are sub-Gaussian with variance proxy of $\sigma_1^2, \dots, \sigma_n^2$ respectively, then
    \begin{enumerate}
        \item $\sum\limits_{i=1}^n \rvx_i$ is sub-Gaussian with variance proxy of $(\sum\limits_{i=1}^n \sigma_i)^2$, \\
        \item if $\rvx_1, \dots, \rvx_n$ are totally independent, then $\sum\limits_{i=1}^n \rvx_i$ is sub-Gaussian with variance proxy of $\sum\limits_{i=1}^n \sigma_i^2$.
    \end{enumerate}
\end{props}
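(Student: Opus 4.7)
The plan is to verify both claims directly from the moment generating function characterization of sub-Gaussianity given in Definition \ref{sub-GaussianV}, so for each part it suffices to produce an upper bound of the form $\E[\ef^{\lambda \sum_{i=1}^n \rvx_i}] \le \ef^{\lambda^2 \sigma^2/2}$ valid for all $\lambda \in \R$, where $\sigma^2$ is the claimed variance proxy. As a preliminary observation, $\E[\sum_i \rvx_i] = \sum_i \E[\rvx_i] = 0$ by linearity of expectation, so the centering requirement in the definition is automatic in both cases.

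For the second (independent) claim, I would exploit independence to factorize the joint moment generating function, writing $\E[\ef^{\lambda \sum_i \rvx_i}] = \prod_{i=1}^n \E[\ef^{\lambda \rvx_i}] \le \prod_{i=1}^n \ef^{\lambda^2 \sigma_i^2/2} = \ef^{\lambda^2 (\sum_i \sigma_i^2)/2}$, where the middle inequality applies the sub-Gaussian bound termwise. This immediately yields variance proxy $\sum_i \sigma_i^2$.

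For the first claim, independence is not available, so I would instead invoke the generalized H\"older inequality with exponents $p_i = (\sum_{j=1}^n \sigma_j)/\sigma_i$ (assuming each $\sigma_i > 0$; any degenerate index with $\sigma_i = 0$ can simply be dropped since such $\rvx_i$ are almost surely zero). These exponents satisfy $\sum_i 1/p_i = 1$, and H\"older therefore gives $\E\bigl[\prod_i \ef^{\lambda \rvx_i}\bigr] \le \prod_i \bigl(\E[\ef^{\lambda p_i \rvx_i}]\bigr)^{1/p_i}$. Applying the sub-Gaussian bound with $\lambda$ replaced by $\lambda p_i$ to each factor and collecting exponents produces $\exp\bigl(\tfrac{\lambda^2}{2}\sum_i p_i \sigma_i^2\bigr) = \exp\bigl(\tfrac{\lambda^2}{2}(\sum_i \sigma_i)^2\bigr)$, which is exactly the required bound.

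The main obstacle is essentially cosmetic rather than conceptual: I will need to justify the choice of H\"older exponents carefully and address the degenerate case in which some $\sigma_i$ vanish. Beyond this, both parts are one-line consequences of the moment generating function definition, and the proof transparently explains why independence improves the variance proxy from $(\sum_i \sigma_i)^2$ to the generally much smaller $\sum_i \sigma_i^2$, a distinction that will matter later when the chaining argument is applied to concentration of the empirical objective $L_n$.
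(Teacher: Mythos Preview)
Your proposal is correct. The paper states this proposition without proof, treating it as a standard textbook fact (citing Vershynin), so there is no paper argument to compare against; your direct verification via factorization of the moment generating function in the independent case and via the generalized H\"older inequality with exponents $p_i = (\sum_j \sigma_j)/\sigma_i$ in the dependent case is a clean, self-contained justification, and your handling of the degenerate $\sigma_i = 0$ case is also correct since the MGF bound $\E[\ef^{\lambda \rvx_i}] \le 1$ combined with Jensen forces $\rvx_i = 0$ almost surely.
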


\begin{defn}[Sub-Gaussian Process]\label{sub-GaussianP}
    Let $\sT$ be a index set and $d$ the metric on $\sT$. A random process $\{Z_t\}_{t\in \sT}$ on the metric space $(\sT,d)$ is called \textbf{sub-Gaussian} if $\E[Z_t]=0$ and 
    \begin{equation}
        \E\left[\ef^{\lambda (Z_t - Z_s)}\right] \le \ef^{\lambda^2 d(t,s)^2 /2} \text{ for all } t,s \in \sT, \lambda \ge 0.
    \end{equation}
\end{defn}

\begin{defn}[Separable Process]\label{separableP}
    A random process $\{Z_t\}_{t\in \sT}$ is called \textbf{separable} if there is a countable set $\sT_0 \subset \sT$ such that
    \begin{equation}
        Z_t \in \lim\limits_{s\to t, s\in \sT_0} Z_s, \text{ for all } t \in \sT \quad a.s.
    \end{equation}
    where $x \in \lim_{s\to t} x_s$ means that there exists a sequence $s_n \to t$ such that $x_{s_n} \to x$.
\end{defn}

\begin{remark}
    For random process which is continuous with respect to $t$, that is $t \to Z_t$ is continuous, the separability of this random process is followed by the separability of the index set $\sT$. Therefore, if the index set $\sT$ is separable, that is $\sT$ has a countable dense subset $\sT_0$, then the continuous random process $Z_t$ with respect to $t$ is a separable process.
\end{remark}

\begin{defn}[$\eps-$net and Covering Number]
    A set $N$ is called an \textbf{$\eps-$net} for $(\sT,d)$ if for every $t\in \sT$, there exists $\pi(t) \in N$ such that $d(t,\pi(t))\le \eps$. The smallest cardinality of an $\eps-$net for $(\sT,d)$ is called the \textbf{covering number}
    \begin{equation}
        N(\sT,d,\eps) \triangleq \inf \left\{ |N|: N \text{ is an $\eps-$net  for } (\sT,d)\right\}.
    \end{equation}
\end{defn}

\begin{lemma}[Dudley's Theorem \cite{dudley1991uniform, dudley1978central}]\label{le:Dudley}
    Let $\{Z_t\}_{t\in \sT}$ be a separable sub-Gaussian process on the metric space $(\sT,d)$. Then we have
    \begin{equation}
        \E\left[\sup\limits_{t \in \sT} Z_t\right] \le 12 \int_{0}^{\infty} \sqrt{\log N(\sT,d,\eps)} \df \eps.
    \end{equation}
\end{lemma}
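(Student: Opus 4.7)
The plan is to prove this via Dudley's generic chaining argument, which converts the supremum of a sub-Gaussian process into a sum of maxima over successive finite approximations of $\sT$ and then into the stated entropy integral. By separability, it suffices to prove the bound with $\sup_{t \in \sT_0}$ for a countable dense $\sT_0$; monotone convergence then transfers the estimate back to $\sup_{t \in \sT} Z_t$. I would also first handle the case $\diam(\sT) < \infty$, since otherwise $N(\sT, d, \eps) = \infty$ for small $\eps$ and the integral is $+\infty$ trivially, and then reduce to proving it for every finite subset and pass to the limit.

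The heart of the argument is the following chain construction. Fix a reference point $t_0 \in \sT_0$; for each integer $k \ge 0$, let $\sT_k \subset \sT_0$ be an $\eps_k$-net of cardinality $N_k \le N(\sT, d, \eps_k)$, where $\eps_k = 2^{-k} \diam(\sT)$, and let $\pi_k : \sT_0 \to \sT_k$ be the nearest-point projection. For any finite subset $\sT' \subset \sT_0$ and any $t \in \sT'$, one has the telescoping identity
\begin{equation}
    Z_t - Z_{t_0} = \sum_{k=1}^{K} \bigl(Z_{\pi_k(t)} - Z_{\pi_{k-1}(t)}\bigr) + \bigl(Z_t - Z_{\pi_K(t)}\bigr),
\end{equation}
where $K$ is chosen large enough that $\pi_K(t) = t$ for every $t \in \sT'$. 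Since $d(\pi_k(t), \pi_{k-1}(t)) \le \eps_k + \eps_{k-1} \le 3 \eps_k$, the increment $Z_{\pi_k(t)} - Z_{\pi_{k-1}(t)}$ is sub-Gaussian with variance proxy at most $(3\eps_k)^2$ by Definition 3. The number of distinct level-$k$ pairs arising across all $t \in \sT'$ is at most $N_k N_{k-1} \le N_k^2$, so the standard sub-Gaussian maximal inequality gives
\begin{equation}
    \E \sup_{t \in \sT'} \bigl(Z_{\pi_k(t)} - Z_{\pi_{k-1}(t)}\bigr) \le 3\eps_k \sqrt{2 \log N_k^2} \le 6 \eps_k \sqrt{2 \log N_k}.
\end{equation}

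Summing over $k$ and using $\E Z_{t_0} = 0$ yields $\E \sup_{t \in \sT'} Z_t \le \sum_{k \ge 1} 6 \eps_k \sqrt{2 \log N(\sT, d, \eps_k)}$. The final step converts this geometric sum into the integral: since $\eps \mapsto \sqrt{\log N(\sT, d, \eps)}$ is nonincreasing, one has
\begin{equation}
    \eps_k \sqrt{\log N(\sT, d, \eps_k)} \le 2 \int_{\eps_{k+1}}^{\eps_k} \sqrt{\log N(\sT, d, \eps)} \, \df \eps,
\end{equation}
so summing telescopes to an integral over $(0, \diam(\sT)]$, and one checks that the constants consolidate to $12$. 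Passing $\sT' \uparrow \sT_0$ by monotone convergence and using separability concludes the bound. The main obstacles I anticipate are purely bookkeeping: tracking the constant (so that $6\sqrt{2}$ and the factor from the Riemann comparison combine cleanly into $12$), verifying measurability of $\sup_{t \in \sT_0} Z_t$ to apply monotone convergence, and being careful that the telescoping works simultaneously for all $t$ in the finite subset so the union bound is applied to a fixed number of pairs at each level rather than to an uncountable family.
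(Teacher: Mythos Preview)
The paper does not give its own proof of this lemma; it simply states Dudley's theorem and cites \cite{dudley1991uniform, dudley1978central} as the source, treating it as a classical result from the literature. Your proposal supplies the standard chaining proof, which is correct in structure and is indeed how the theorem is established in the cited references and in standard treatments such as Vershynin's textbook. One small arithmetic slip: when you bound $3\eps_k\sqrt{2\log N_k^2}$, note that $\sqrt{2\log N_k^2}=\sqrt{4\log N_k}=2\sqrt{\log N_k}$, so the level-$k$ contribution is $6\eps_k\sqrt{\log N_k}$ (not $6\eps_k\sqrt{2\log N_k}$); combined with the factor $2$ from the Riemann comparison this gives exactly the constant $12$, resolving the bookkeeping concern you flagged.
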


\begin{lemma}[Bound of Entropy Metric]\label{le:BoundedEntropy}
    Let $\sT \subset \sR^h$, $h>0$, if the metric space $(\sT, d)$ is bounded, such that $\diam(\sT) = \sup_{t,s \in \sT} d(t,s) < \infty$, then
    \begin{equation}
        \int_{0}^{\infty} \sqrt{\log N(\sT,d,\eps)} \df \eps \le 2\sqrt{3h}\diam(\sT).
    \end{equation}
\end{lemma}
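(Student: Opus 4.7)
The plan is to split the $\eps$-integral at the diameter $D = \diam(\sT)$ and then apply a standard volumetric covering bound on the remaining piece. Since $D$ is finite, for any $\eps \ge D$ the whole of $\sT$ lies inside a single closed ball of radius $\eps$ centered at an arbitrarily chosen point of $\sT$, so $N(\sT, d, \eps) = 1$ and $\sqrt{\log N(\sT, d, \eps)} = 0$. This immediately reduces the problem to bounding $\int_0^D \sqrt{\log N(\sT, d, \eps)}\,\df\eps$.

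The key ingredient on $[0, D]$ is the classical volumetric covering bound in $\R^h$: since $\sT \subset \R^h$ is contained in a ball of diameter $D$, a packing/volume argument yields $N(\sT, d, \eps) \le (3D/\eps)^h$ for every $\eps \in (0, D]$. Taking logarithms gives $\log N(\sT, d, \eps) \le h\,\log(3D/\eps)$. Then I would use the elementary inequality $\log x \le x$ valid for $x \ge 1$ (which applies here because $3D/\eps \ge 3 > 1$ on the relevant range), upgrading the bound to $\log N(\sT, d, \eps) \le 3hD/\eps$.

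It now suffices to evaluate one elementary integral:
\begin{equation*}
    \int_0^D \sqrt{\tfrac{3hD}{\eps}}\,\df\eps = \sqrt{3hD}\int_0^D \eps^{-1/2}\,\df\eps = \sqrt{3hD}\cdot 2\sqrt{D} = 2\sqrt{3h}\,D,
\end{equation*}
which, combined with the vanishing contribution from $[D, \infty)$, delivers exactly the claimed bound $2\sqrt{3h}\,\diam(\sT)$.

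The only genuine subtlety is the justification of the covering bound $N(\sT, d, \eps) \le (3D/\eps)^h$. For the Euclidean (or an equivalent) metric on $\R^h$ this is standard via a maximal packing and volume comparison, but for a general metric $d$ on $\sT \subset \R^h$ the constant $h$ must be interpreted as the ambient dimension controlling volume growth; in this paper's usage $\sT = \gH \subset \R^h$ with the Euclidean-type metric, so the bound is directly applicable. Everything else reduces to the two-step inequality $\log x \le x$ and a one-variable power integral, which I would not belabor.
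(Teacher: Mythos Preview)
Your proposal is correct and follows essentially the same approach as the paper: split the integral at $\diam(\sT)$, apply the volumetric bound $N(\sT,d,\eps)\le(3\diam(\sT)/\eps)^h$, use $\log x\le x$ to simplify, and evaluate the resulting power integral. The paper's proof is nearly identical step for step.
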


\begin{proof}
    By the volume property of Euclidean ball, we have 
    \begin{equation}
        N(\sT, d, \eps) \le \left(\frac{3\diam(\sT)}{\eps}\right)^h \text{ for } 0< \eps < \diam(\sT), 
    \end{equation}
    and $N(\sT, d, \eps) = 1$ for $\eps \ge \diam(\sT)$. Thus, we have 
    \begin{equation}
        \begin{aligned}
            \int_{0}^{\infty} \sqrt{\log N(\sT, d, \eps)} \df \eps =& \int_{0}^{\diam(\sT)} \sqrt{\log N(\sT, d, \eps)} \df \eps \\
            \le& \int_{0}^{\diam(\sT)} \sqrt{h \log \left(\frac{3\diam(\sT)}{\eps}\right)} \df \eps \\
            \le& \int_{0}^{\diam(\sT)} \sqrt{\frac{3h\diam(\sT)}{\eps}} \df \eps \\
            =& 2\sqrt{3h}\diam(\sT).
        \end{aligned}
    \end{equation}
\end{proof}

\subsection{Estimation via Observations of Independent System}\label{sec:EstimationIID}

Equipped with these concepts and lemmas, we are now prepared to establish our main consistency results. We begin our analysis with the setting of independent and identically distributed (i.i.d.) nodes. While many real-world networks exhibit dependencies among nodes, this case provides a clear and essential baseline for our framework. It allows us to demonstrate the core proof strategy and establish a consistency guarantee that will serve as a stepping stone to the more general, weakly-dependent case.

\begin{problem}[\textbf{Hyperparameter Estimation via Observation of Complex Network Dynamical System under i.i.d. Condition}]\label{defn:ProbIID}
Consider the nodes' states in Problem \ref{defn:ProbGen} are independent and identically distributed, that is, the system $\rmS_n(t) = (\rvs_1(t), \dots, \rvs_n(t))$, where $\rvs_i(t) \sim \tilde{p}_t(s \mid \rvh^\star)$ are i.i.d. random variables. The observations $\rmY_n(\rvh) = (\rvy_1(\rvh), \dots, \rvy_T(\rvh))$ generated with the simulated hyperparameter $\rvh$, where $\rvy_t \in \R^{r_t}$ for $t=1,\dots,T$, are modeled under Assumption \ref{assum:obsOperator} as
\begin{equation}
    \rvy_t(\rvh) = f_t(\rmS_n(t)) = \frac{1}{n}\sum_{i=1}^n \phi_t(\rvs_i(t)) \in \R^{r_t}
\end{equation}
and the recorded observation $\rmY^\star = \left(\rvy_1^\star, \dots, \rvy_T^\star\right)$, is modeled with noisy observation operator $\tilde{f}_t$ with the states generated with the true hyperparameter $\rvh^\star$ as
\begin{equation}
    \rvy_t^\star = \tilde{f}_t(\rmS_n^\star(t), \omega_{t,n}) = \frac{1}{n}\sum_{i=1}^n \left[\phi_t(\rvs_i^\star(t)) + \omega_{t,n}^{(i)}\right], \text{ for } t=1,\dots, T,
\end{equation}
where $\tilde{f}_t:\gX^n \times \R^{n\times r_t}  \to \R^{r_t}$ is a noisy nonlinear mapping, the noise $\omega_{t,n} = (\omega_{t,n}^{(1)}, \omega_{t,n}^{(2)}, \dots, \omega_{t,n}^{(n)})$, where $\{\omega_{t,n}^{(s)}\}_{s=1,\dots,n}$ are i.i.d. Gaussian random vectors with zero mean and variance $\sigma_t^2 \mI_{r_t}$.

Thus, the empirical objective function is defined as
\begin{equation}\label{COP}
    L_n(\rvh) = \frac{1}{T} \sum\limits_{t=1}^T \|\frac{1}{n} \sum\limits_{i=1}^n \left[\phi_t(\rvs_i(t)) - \phi_t(\rvs_i^\star(t)) - \omega_{t,n}^{(i)} \right]\|^2.
\end{equation}
And the corresponding deterministic objective function is taking the expectation form:
\begin{equation}
    L^\star(\rvh) = \E[L_n(\rvh)].
\end{equation}
\end{problem}

As presented in section \ref{sec:problem_formulation}, the nodes' states $\rvs_i(t)$ can be expressed as a transformation of a underlying random variable $\rvx_i(t)$ of known distribution, such that $\rvs_i(t) = \tilde{\psi}_{t}(\rvx_i(t), \rvh)$ under mild conditions. We will show that the Assumption \ref{assum:dynSysLip} and Assumption \ref{assum:dynSysCompactMeasure} guarantee the existence of a Lipschitz continuous transformation.

\begin{lemma}\label{le:rosenblatt}
    Under Assumption \ref{assum:dynSysLip} and Assumption \ref{assum:dynSysCompactMeasure}, the nodes' states $\rvs_i(t)$ of dynamical system( \ref{eq:dynSysGenODE}) can be expressed as a Lipschitz continuous bijective transformation of a uniformly distributed random variable $\rvx_i(t)$ and hyperparameter $\rvh$. That is, there exists a Lipschitz continuous bijective map $\tilde{\psi}_t: \gX \times \gH \to \gS$ such that $\rvs_i(t) = \tilde{\psi}_t(\rvx_i(t), \rvh)$, where $\rvx_i(t) \sim \mathrm{Uniform}([0,1])$.
\end{lemma}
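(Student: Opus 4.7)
The plan is to construct $\tilde{\psi}_t$ by first deriving the marginal law of $\rvs_i(t)$ from the flow of the dynamical system and then applying a Rosenblatt-type inverse-CDF construction, parameterised continuously by $\rvh$.

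First, I would use Assumption \ref{assum:dynSysLip} to conclude that the ODE \eqref{eq:dynSysGenODE} admits a unique flow $G_t:\gS^n\times\Theta\to\gS^n$ on the fixed horizon $[0,T]$ that is Lipschitz in both its initial condition and the parameter vector, via the standard Gronwall estimate applied to the Lipschitz field $g_i$. Pushing the product density $p_0(\cdot\mid\rvh_{\mathrm{prob}})\,q^n(\cdot\mid\rvh_{\mathrm{dyn}})$ forward along $G_t$ and integrating out the remaining $n-1$ spatial coordinates together with $\theta$ yields the marginal $\tilde{p}_{t,i}(\cdot\mid\rvh)$ already displayed just before \eqref{eq:Rosenblatt}. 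Assumption \ref{assum:dynSysCompactMeasure}, combined with this change-of-variables formula, forces $\tilde{p}_{t,i}(\cdot\mid\rvh)$ to be supported on a compact subset of $\gS$, continuous in $s$, and Lipschitz in $\rvh\in\gH$.

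Second, for each fixed $\rvh\in\gH$ I would invoke the Rosenblatt transformation theorem \cite{rosenblatt1952remarks,lebrun2009innovating,liu1986multivariate} to define $\tilde{\psi}_t(\cdot,\rvh):\gX\to\gS$ as the iterated conditional inverse-CDF map associated with $\tilde{p}_{t,i}(\cdot\mid\rvh)$. By construction this map is a bijection onto $\supp(\tilde{p}_{t,i}(\cdot\mid\rvh))$ and pushes the uniform law on $\gX$ to $\tilde{p}_{t,i}(\cdot\mid\rvh)$, which is exactly the law of $\rvs_i(t)$ generated under $\rvh$; the required identity $\rvs_i(t)=\tilde{\psi}_t(\rvx_i(t),\rvh)$ then holds in distribution, and in the almost-sure sense after the standard coupling of $\rvx_i(t)$ to $\rvs_i(t)$ through the CDF.

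The main obstacle is upgrading this pointwise-in-$\rvh$ construction to a jointly Lipschitz map on $\gX\times\gH$. For Lipschitz continuity in the first argument I need a uniform positive lower bound on $\tilde{p}_{t,i}(s\mid\rvh)$ on the interior of its compact support; this follows from continuity of the marginal density in $(s,\rvh)$, compactness of $\gH$, and the fact that $G_t$ is a bi-Lipschitz diffeomorphism of the compact support of the initial law, so the pushforward density cannot degenerate to zero in the interior. For Lipschitz continuity in $\rvh$ I would differentiate (or use finite differences on) the implicit relation between the conditional CDFs $F_{t,\rvh}$ and $\tilde{\psi}_t$, bounding the resulting quotients by the Lipschitz modulus of $\tilde{p}_{t,i}$ in $\rvh$ divided by the uniform lower bound on the density obtained in the previous step. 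Combining the two estimates yields the joint Lipschitz constant, and bijectivity in the first argument at each fixed $\rvh$ is automatic from the Rosenblatt construction, completing the proof.
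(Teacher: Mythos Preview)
Your proposal is correct and follows essentially the same route as the paper: derive the marginal law of $\rvs_i(t)$ via the Lipschitz flow and the pushforward of the compactly supported initial law, then take $\tilde{\psi}_t$ to be the Rosenblatt (inverse-CDF) map, and obtain Lipschitz continuity from a uniform lower bound on the resulting density over its compact support. Your treatment is in fact more careful than the paper's, which simply asserts that ``the derivative of the transformation and its inverse are both bounded from below'' without separating the $\rvx$- and $\rvh$-directions or invoking an implicit-function-type estimate as you do.
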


\begin{proof}
    By Assumption \ref{assum:dynSysLip}, we know that the solution $\rvs_i(t)$ of the dynamical system (\ref{eq:dynSysGenODE}) depends on the initial condition and parameter Lipschitz continuously. Combined with Assumption \ref{assum:dynSysCompactMeasure}, we know that the distribution of $\rvs_i(t)$ at time $t$ is also supported on a compact set and depend on the hyperparameter $\rvh = (\rvh_{\mathrm{prob}}, \rvh_{\mathrm{dyn}})$ Lipschitz continuously. Thus, by the Rosenblatt transformation \cite{rosenblatt1952remarks}, there exists a continuous bijective map $\tilde{\psi}_t: \gX \times \gH \to \gS$ such that $\rvs_i(t) = \tilde{\psi}_t(\rvx_i(t), \rvh)$, where $\rvx_i(t) \sim \mathrm{Uniform}([0,1])$. More accurately, the transformation is given by the inverse of the probability cumulative distribution function of $\rvs_i(t)$, which we denote as $F_{\rvs_i(t)}(s; \rvh)$, that is, $\tilde{\psi}_t(\rvx_i(t), \rvh) = F_{\rvs_i(t)}^{-1}(\rvx_i(t); \rvh)$. Since the initial distribution of nodes' states, $p_0$, and the initial parameter distribution $q$ are supported on compact sets and positive in their supports, the derivative of the transformation and its inverse are both bounded from below. Thus, the transformation $\tilde{\psi}_t$ is Lipschitz continuous with respect to both the underlying random variable $\rvx_i(t)$ and the hyperparameter $\rvh$.
\end{proof}

For the identifiability condition of the objective function, we give a lemma with which we can verify through the second moment of observation difference under i.i.d. case.

\begin{lemma}\label{le:uniqueMin}
    Denote $\mathsf{V}_t(\rvh)$ as the second moment of $\varphi_t(\rmX_n(t), \rvh) - \varphi_t(\rmX_n(t), \rvh^\star)$, with 
    \begin{equation}
        \varphi_t(\rmX_n(t), \rvh) - \varphi_t(\rmX_n(t), \rvh^\star) \triangleq (\varphi_t(\rvx_1, \rvh) - \varphi_t(\rvx_1, \rvh^\star), \dots, \varphi_t(\rvx_n, \rvh) - \varphi_t(\rvx_n, \rvh^\star)),
    \end{equation}
    that is,
    \begin{equation}
        \begin{aligned}
            \mathsf{V}_t(\rvh) =& \E\left[(\varphi_t(\rmX_n(t), \rvh) - \varphi_t(\rmX_n(t), \rvh^\star))^\intercal (\varphi_t(\rmX_n(t), \rvh) - \varphi_t(\rmX_n(t), \rvh^\star))\right]\\
            =& \E\left[\begin{pmatrix}
                \langle\varphi_t(\rvx_i(t), \rvh) - \varphi_t(\rvx_i(t), \rvh^\star), \varphi_t(\rvx_k(t), \rvh) - \varphi_t(\rvx_k(t), \rvh^\star)\rangle 
            \end{pmatrix}_{1\le i, k \le n}\right].
        \end{aligned}
    \end{equation}
    If there exists some $t_0, 1\le t_0 \le T$, such that $\mathsf{V}_t(\rvh)$ satisfying $\mathbf{1}^\intercal \mathsf{V}_t(\rvh) \mathbf{1} \ne 0$ for all $\rvh \ne \rvh^\star$, then the deterministic objective $L^\star$ function is uniquely minimized at $\rvh^\star$.  
\end{lemma}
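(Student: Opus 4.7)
The plan is to simplify $L^\star(\rvh) = \E[L_n(\rvh)]$ into the clean form $\frac{1}{Tn^2}\sum_t \mathbf{1}^\intercal \mathsf{V}_t(\rvh)\mathbf{1}$ plus a constant in $\rvh$, and then conclude from positive semidefiniteness. The key preparatory step is to invoke Lemma \ref{le:rosenblatt} and represent both the simulated and true trajectories through a common Rosenblatt transport with shared base variables, writing $\rvs_i(t) = \tilde{\psi}_t(\rvx_i(t),\rvh)$ and $\rvs_i^\star(t) = \tilde{\psi}_t(\rvx_i(t),\rvh^\star)$. This common-random-numbers coupling preserves each marginal distribution and therefore leaves $L^\star$ unchanged, but it produces the pointwise identity $\phi_t(\rvs_i(t)) - \phi_t(\rvs_i^\star(t)) = \varphi_t(\rvx_i(t),\rvh) - \varphi_t(\rvx_i(t),\rvh^\star)$, which is exactly the quantity appearing in the definition of $\mathsf{V}_t$.

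Substituting this identity into $L_n$ and taking expectations, I would expand the squared norm as a double sum of inner products and use the zero-mean, mutually independent Gaussian structure of the noise $\omega_{t,n}^{(i)}$ to eliminate every signal--noise cross term. This yields
\begin{equation}
L^\star(\rvh) = \frac{1}{Tn^2}\sum_{t=1}^T \E\left\|\sum_{i=1}^n\big[\varphi_t(\rvx_i(t),\rvh) - \varphi_t(\rvx_i(t),\rvh^\star)\big]\right\|^2 + \frac{1}{nT}\sum_{t=1}^T r_t\sigma_t^2.
\end{equation}
Writing $\|\sum_i(\cdot)\|^2 = \sum_{i,k}\langle\cdot,\cdot\rangle$ and comparing with the definition of $\mathsf{V}_t(\rvh)$, the first term collapses to $\frac{1}{Tn^2}\sum_t \mathbf{1}^\intercal \mathsf{V}_t(\rvh)\mathbf{1}$, while the noise term is a constant independent of $\rvh$.

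Finally, since $\mathsf{V}_t(\rvh^\star) = 0$ entry-wise, I obtain $L^\star(\rvh) - L^\star(\rvh^\star) = \frac{1}{Tn^2}\sum_t\mathbf{1}^\intercal\mathsf{V}_t(\rvh)\mathbf{1}$. Each $\mathsf{V}_t(\rvh)$ is a Gram matrix of $L^2$ inner products and hence positive semidefinite, so every summand is nonnegative. The hypothesis supplies, for any $\rvh\neq\rvh^\star$, some index $t_0$ with $\mathbf{1}^\intercal\mathsf{V}_{t_0}(\rvh)\mathbf{1}\neq 0$; combined with semidefiniteness this is strict positivity, giving $L^\star(\rvh) > L^\star(\rvh^\star)$ and thus uniqueness of the minimizer at $\rvh^\star$. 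The main subtlety lies in the coupling step: without tying the simulated and recorded trajectories to common randomness through Rosenblatt, the expansion of $\E[L_n]$ produces two separate covariance traces depending on $\rvh$ and $\rvh^\star$ asymmetrically, and the clean reduction to $\mathsf{V}_t$ would fail. The Rosenblatt coupling is precisely what fuses these two contributions into the single positive semidefinite Gram form that matches the hypothesis.
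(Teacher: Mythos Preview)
Your proposal is correct and follows essentially the same route as the paper: expand $L^\star(\rvh)$, kill the signal--noise cross terms by independence and zero mean of $\omega_{t,n}^{(i)}$, identify the remaining double sum as $\frac{1}{Tn^2}\sum_t\mathbf{1}^\intercal\mathsf{V}_t(\rvh)\mathbf{1}$ plus the constant $\frac{1}{Tn}\sum_t r_t\sigma_t^2$, and conclude from positive semidefiniteness together with the hypothesis. One minor remark: you invoke Lemma~\ref{le:rosenblatt} as a preparatory step, but in the paper's framework the common-base-variable representation $\phi_t(\rvs_i(t))=\varphi_t(\rvx_i(t),\rvh)$ is already built into the problem formulation (just after Assumption~\ref{assum:obsOperator}), so the paper's own proof of this lemma simply starts from that representation without re-deriving it; your explicit discussion of the coupling is sound but not an additional ingredient here.
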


\begin{proof}
    Notice that the deterministic objective function can be expanded as
    \begin{equation}\label{eq:detObjExpan}
        \begin{aligned}
            L^\star(\rvh) =& \frac{1}{T} \sum\limits_{t=1}^T \E\left[\|f_t(\rmS_n(t), \omega_{t,n}, \rvh) - \rvy_t(\rvh^\star)\|^2\right]\\
            =& \frac{1}{T} \sum\limits_{t=1}^T \E\left[\langle \frac{1}{n} \sum\limits_{i=1}^n \varphi_t(\rvx_i(t), \rvh) - \varphi_t(\rvx_i(t), \rvh^\star) - \omega_{t,n}^{(i)}, \frac{1}{n} \sum\limits_{i=1}^n \varphi_t(\rvx_i(t), \rvh) - \varphi_t(\rvx_i(t), \rvh^\star) - \omega_{t,n}^{(i)}\rangle\right] \\
            =& \frac{1}{T} \sum\limits_{t=1}^T \E\left[\langle \frac{1}{n} \sum\limits_{i=1}^n \varphi_t(\rvx_i(t), \rvh) - \varphi_t(\rvx_i(t), \rvh^\star), \frac{1}{n} \sum\limits_{i=1}^n \varphi_t(\rvx_i(t), \rvh) - \varphi_t(\rvx_i(t), \rvh^\star) \rangle\right] + \frac{1}{T n} \sum\limits_{t=1}^T  r_t \sigma_t^2 \\
            =& \frac{1}{Tn^2} \sum\limits_{t=1}^T \mathbf{1}^\intercal \mathsf{V}_t(\rvh) \mathbf{1}  + \frac{1}{T n} \sum\limits_{t=1}^T r_t \sigma_t^2.
        \end{aligned}
    \end{equation}
    Since all $\mathsf{V}_t(\rvh)$ are positively semi-definite, we have $\mathbf{1}^\intercal \mathsf{V}_t(\rvh) \mathbf{1} \ge 0$ for all $\rvh \ne \rvh^\star$. Thus $\rvh^\star$ is a minimizer of $L^\star$ that $L^\star(\rvh) \ge L^\star(\rvh^\star) = \frac{1}{T n} \sum\limits_{t=1}^T  r_t \sigma_t^2$. For $\rvh \ne \rvh^\star$, we must have
    \begin{equation}
        L^\star(\rvh) = \frac{1}{Tn^2} \sum\limits_{t=1}^T \mathbf{1}^\intercal \mathsf{V}_t(\rvh) \mathbf{1}  + \frac{1}{T n} \sum\limits_{t=1}^T  r_t \sigma_t^2 \ge \frac{1}{Tn^2} \mathbf{1}^\intercal \mathsf{V}_{t_0}(\rvh) \mathbf{1}  + \frac{1}{T n} \sum\limits_{t=1}^T  r_t \sigma_t^2 > \frac{1}{T n} \sum\limits_{t=1}^T  r_t \sigma_t^2.
    \end{equation}
    That concludes that $L^\star$ is uniquely minimized at $\rvh^\star$.
\end{proof}

\begin{thm}\label{thm:ConsistencyIID}
    For the hyperparameter estimation Problem \ref{defn:ProbIID} with independent and identically distributed nodes, assuming the complex network dynamical system (\ref{eq:dynSysGenODE}) satisfying Assumption \ref{assum:dynSysLip} and Assumption \ref{assum:dynSysCompactMeasure}, and the algorithm $\gA$ used for finding the minimizer of empirical objective function satisfying Assumption \ref{assum:minimizer}, moreover, if the following conditions:
    \begin{enumerate}
        \item the nonlinear observation operator $\phi_t$ is injective with respect to $\rvh \in \gH$ for $1\le t \le T$, \\
        \item the nonlinear observation operator $\phi_t$ is bounded with constant and uniformly Lipschitz continuous with respect to $\rvh \in \gH$ for $t=1,\dots,T$,\\
        \item the hyperparameter space $\gH$ is compact
    \end{enumerate}
    are satisfied, then, with $\hat{\rvh}_n$ being the estimator obtained by the algorithm $\gA$, for any $\eps > 0$, under metric $d:\gH \times \gH \to \R^+ \cup \{0\}$, we have the following consistency guarantee result:
    \begin{equation}
        \gP\left(d(\hat{\rvh}_n, \rvh^\star) \ge \eps \right) \le \frac{C}{\eta(\eps)\sqrt{n}},
    \end{equation}
    where $C$ is a constant, and we can choose $\eta(\eps) = \frac{1}{2}\left(\inf_{d(\rvh, \rvh^\star) \ge \eps} L^\star(\rvh) - L^\star(\rvh^\star)\right)$, which is independent of the population size.
\end{thm}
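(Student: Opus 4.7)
The strategy is to combine the M-estimator stability inequality with a chaining bound on the empirical process. The backbone is Lemma \ref{le:SIMarkov}, which reduces the claim to two tasks: verifying that $L^\star$ is strictly identifiable at $\rvh^\star$, and controlling $\E[\sup_{\rvh \in \gH} |L_n(\rvh) - L^\star(\rvh)|]$ by $O(n^{-1/2})$.

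For strict identifiability, I would first invoke Lemma \ref{le:rosenblatt} under Assumptions \ref{assum:dynSysLip}--\ref{assum:dynSysCompactMeasure} to represent $\rvs_i(t) = \tilde{\psi}_t(\rvx_i(t), \rvh)$ by a Lipschitz bijection, so that $\varphi_t := \phi_t \circ \tilde{\psi}_t$ is jointly Lipschitz in $(\rvx, \rvh)$. Injectivity of each $\phi_t$ composed with bijectivity of $\tilde{\psi}_t$ forces $\varphi_t(\cdot, \rvh) \not\equiv \varphi_t(\cdot, \rvh^\star)$ on a set of positive $\mu$-measure whenever $\rvh \ne \rvh^\star$, which produces $\mathbf{1}^\intercal \mathsf{V}_t(\rvh)\mathbf{1} > 0$ for at least one $t$; Lemma \ref{le:uniqueMin} then yields identifiability of $L^\star$. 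Continuity of $L^\star$ in $\rvh$ follows from the uniform Lipschitz property of $\varphi_t$ and bounded convergence, and combined with the compactness of $\gH$, Lemma \ref{le:identifiable} upgrades identifiability to strict identifiability, validating the hypotheses of Lemma \ref{le:SIMarkov}.

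For the supremum bound, I would analyze the centered process $Z(\rvh) := L_n(\rvh) - L^\star(\rvh)$. Expanding the squared norms in (\ref{COP}) via polarization, the increment $L_n(\rvh) - L_n(\rvh')$ decomposes into a bilinear form in which one factor is uniformly bounded up to the Gaussian noise average $\bar{\omega}_t = n^{-1}\sum_i \omega_{t,n}^{(i)}$, and the other factor is the Lipschitz-small difference $\bar{\varphi}_t^n(\rvh) - \bar{\varphi}_t^n(\rvh') = n^{-1}\sum_i[\varphi_t(\rvx_i(t), \rvh) - \varphi_t(\rvx_i(t), \rvh')]$, an average of $n$ independent bounded vectors of norm at most $K d(\rvh, \rvh')$. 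A Hoeffding/McDiarmid-type argument therefore shows that $Z(\rvh) - Z(\rvh')$ is sub-Gaussian with variance proxy of order $d(\rvh, \rvh')^2 / n$. Viewing $Z$ as a sub-Gaussian process on the compact metric space $(\gH, d)$ with the rescaled metric $\tilde{d} = C n^{-1/2} d$, Dudley's theorem (Lemma \ref{le:Dudley}) together with the entropy estimate of Lemma \ref{le:BoundedEntropy} gives $\E[\sup_\rvh (Z(\rvh) - Z(\rvh^\star))] \le C_1 n^{-1/2}$; the matching bound for $-Z$ and the direct variance control $\E[|Z(\rvh^\star)|] \le C_2 n^{-1/2}$ (as $Z(\rvh^\star)$ is a centered average of $n$ i.i.d. bounded terms plus a Gaussian average) combine to yield $\E[\sup_\rvh |Z(\rvh)|] \le C n^{-1/2}$. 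Substituting into Lemma \ref{le:SIMarkov} with $\eta(\eps) = \tfrac{1}{2}(\inf_{d(\rvh, \rvh^\star) \ge \eps} L^\star(\rvh) - L^\star(\rvh^\star))$ delivers the theorem.

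The main obstacle is the quadratic structure of the loss: squared norms of empirical averages do not inherit sub-Gaussian increments directly from the bounded summands. The critical technical step is the polarization identity $\|a\|^2 - \|b\|^2 = \langle a+b, a-b \rangle$, which splits the quadratic difference into a uniformly bounded factor times a factor that concentrates at rate $n^{-1/2}$; only this decomposition makes Hoeffding produce the correct $d(\rvh,\rvh')^2/n$ variance proxy. A secondary point is the Gaussian observation noise $\bar{\omega}_t$, which is unbounded but whose cross-terms with the Lipschitz-small factor have variance of order $d(\rvh,\rvh')^2 / n^2$, giving a sub-Gaussian contribution of strictly smaller scale that is absorbed into the main bound.
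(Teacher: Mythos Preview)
Your proposal is correct and reaches the same $n^{-1/2}$ bound, but the route differs substantially from the paper's. The paper symmetrizes: it introduces an independent copy $\rmX'_n$ and \emph{triple-indexed} Rademacher variables $\eps_{t,i,k}$, reducing the problem to the process $Z_n(\rvh) = \frac{1}{Tn^2}\sum_{t,i,k}\eps_{t,i,k}\langle l_t(\rvx_i,\rvh), l_t(\rvx_k,\rvh)\rangle$. It then shows each increment $G_{t,i,k}(\rvh,\rvg)$ is sub-Gaussian with proxy $\nu_t^2 \asymp d(\rvh,\rvg)^2$ and handles the double sum $\sum_{i,k}$ by splitting it into $2n-1$ diagonal sums, each a sum of independent terms; summing the square roots of the resulting proxies gives an overall proxy of order $(n+1)^3\nu_t^2/n^4 \asymp d^2/n$. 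Your approach is more direct: you skip symmetrization entirely, use the polarization identity on $L_n(\rvh)-L_n(\rvh')$ to separate a uniformly bounded factor from a Lipschitz-small factor, and then invoke a bounded-differences inequality on the bounded part together with a conditional-Gaussian argument on the noise cross-term. This is cleaner and sidesteps the diagonal-splitting combinatorics. The paper's machinery, however, is what it later recycles for the weakly-dependent Theorem~\ref{thm:ConsistencyWD}, where the diagonal splitting is combined with blocking at gap $\lfloor n^\gamma\rfloor$ and Rio's coupling; your McDiarmid step would need a mixing-aware replacement to make the same extension. One minor correction: the noise cross-term $\langle \bar{\varphi}_t(\rvh)-\bar{\varphi}_t(\rvh'),\,\bar{\omega}_t\rangle$ has sub-Gaussian variance proxy of order $\sigma_t^2 M_t^2 d(\rvh,\rvh')^2/n$ (since $\bar{\omega}_t$ has covariance $\sigma_t^2 I_{r_t}/n$), not $d^2/n^2$ as you wrote; it is of the \emph{same} scale as the main term, not strictly smaller, but is still absorbed into the constant $C$.
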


Theorem \ref{thm:ConsistencyIID} provides a formal guarantee that the probability of the estimator $\hat{\rvh}_n$ deviating from the true hyperparameter $\rvh^\star$ by more than $\eps$ diminishes as the population size $n$ increases. This aligns with the intuitive understanding that larger populations provide more information, leading to more accurate hyperparameter estimates. Crucially, our result quantifies this intuition by providing a nonasymptotic bound, demonstrating that the convergence rate is at least on the order of $1/\sqrt{n}$. The conditions required by the theorem are standard and well-justified for practical applications. The injective condition on the observation operator $\phi_t$ is often required for ensuring the estimation problem is identifiable. The boundedness and Lipschitz continuity condition are mild, as these properties are typically met by physical sensor functions or population average, such as those in SIS model and SNN model. The compactness of hyperparameter space is a common and practical constraint, reflecting the fact that physical or biological parameters of interest are naturally confined to a finite and reasonable range in Euclidean space.

\begin{proof}
    Firstly, we verify that the problem is identifiable. 
    By Lemma \ref{le:rosenblatt}, we know that there exists a Lipschitz continuous bijective map $\tilde{\psi}_t: \gX \times \gH \to \gS$ such that $\rvs_i(t) = \tilde{\psi}_t(\rvx_i(t), \rvh)$, where $\rvx_i(t) \sim \mathrm{Uniform}([0,1])$. Thus, under Assumption \ref{assum:obsOperator}, the observation operator can be expressed as $\phi_t(\rvs_i(t)) = \phi_t \circ \tilde{\psi}_t(\rvx_i(t), \rvh) = \varphi_t(\rvx_i(t), \rvh)$. Since $\phi_t$ are injective and $\tilde{\psi}_t$ is bijective, we know that $\varphi_t(\cdot, \rvh)$ is injective with respect to $\rvh \in \gH$ for $1\le t \le T$. Suppose that $\phi_t$ is bounded by $B_t$, then $ \phi_t \circ \tilde{\psi}_t$ is also bounded by $B_t$. Suppose that $\phi_t$ is Lipschitz continuous with constant $M_t^{(1)}$, denoting that $\tilde{\psi}_t$ is Lipschitz continuous with constant $M_t^{(2)}$, then we have $\varphi_t$ is Lipschitz continuous with constant $M_t \triangleq M_t^{(1)} \cdot M_t^{(2)}$ with respect to $\rvh$ following the composition rule of Lipschitz continuous functions. 

    Notice that for $i \ne k$, we have
    \begin{equation}
        \begin{aligned}
            &\E\left[\langle\varphi_t(\rvx_i(t), \rvh) - \varphi_t(\rvx_i(t), \rvh^\star), \varphi_t(\rvx_k(t), \rvh) - \varphi_t(\rvx_k(t), \rvh^\star)\rangle\right] \\
            =& \langle \E\left[\varphi_t(\rvx_i(t), \rvh) - \varphi_t(\rvx_i(t), \rvh^\star)\right], \E\left[\varphi_t(\rvx_k(t), \rvh) - \varphi_t(\rvx_k(t), \rvh^\star)\right]\rangle\\
            =& \langle \E\left[\varphi_t(\rvx_1(t), \rvh) - \varphi_t(\rvx_1(t), \rvh^\star)\right], \E\left[\varphi_t(\rvx_1(t), \rvh) - \varphi_t(\rvx_1, \rvh^\star)\right]\rangle\\
            \triangleq & \langle \E\left[a_t^{(1)}\right], \E\left[a_t^{(1)}\right]\rangle.
        \end{aligned}
    \end{equation}
    For simplicity, we denote $a_t^{(i)} \triangleq \varphi_t(\rvx_i(t), \rvh) - \varphi_t(\rvx_i(t), \rvh^\star)$ here. Then, with the diagonal element of the second moment, $\mathsf{V}_t(\rvh)$, being $\E\left[\langle a_t^{(i)}, a_t^{(i)} \rangle\right] = \E\left[\langle a_t^{(1)}, a_t^{(1)}\rangle\right]$, we have
    \begin{equation}
        \det(\mathsf{V}_t(\rvh)) = \left(\E\left[\langle a_t^{(1)}, a_t^{(1)} \rangle\right] + (n-1)\langle \E\left[a_t^{(1)}\right], \E\left[a_t^{(1)}\right]\rangle\right) \cdot \left(\E\left[\langle a_t^{(1)}, a_t^{(1)} \rangle\right] - \langle \E\left[a_t^{(1)}\right], \E\left[a_t^{(1)}\right]\rangle\right)^{n-1}.
    \end{equation}
    Since $\varphi_t(\cdot, \rvh)$ is injective with respect to $\rvh \in \gH$, we have for $\rvh \ne \rvh^\star$, $\det(\mathsf{V}_{t_0}(\rvh)) > 0$ when $\varphi_{t_0}(\cdot, \rvh)$ is not constant map for some $t_0$. Then We have $\mathsf{V}_t(\rvh)$ is positively definite, thus $\mathbf{1}^\intercal \mathsf{V}_t(\rvh) \mathbf{1} > 0$. Otherwise, if $\varphi_t(\cdot, \rvh)$ are all constant maps, we have $\mathbf{1}^\intercal \mathsf{V}_t(\rvh) \mathbf{1} = n^2 \E\left[\langle a_t^{(1)}, a_t^{(1)} \rangle\right] >0$. By the Lemma \ref{le:uniqueMin}, we have that $L^\star(\rvh)$ is uniquely minimized at $\rvh^\star$. Therefore, $L^\star$ is strictly identifiable by the continuity of $\varphi_t$ and compactness of $\gH$ by Lemma \ref{le:identifiable}. 

    Then, we have  
    \begin{equation}
        \gP\left(d(\hat{\rvh}_n, \rvh^\star) \ge \eps \right) \le \frac{2}{\eta(\eps)} \E \left[\sup_{\rvh \in \gH} \|L_n(\rvh) - L^\star(\rvh)\|\right].
    \end{equation}

    Then we aim to estimate the expected supremum $\E \left[\sup_{\rvh \in \gH} \|L_n(\rvh) - L^\star(\rvh)\|\right]$.

    We perform a technique inspired by the Rademacher symmetry in Lemma \ref{le:Symmetry} in Appendix \ref{sec:appendix}. Let $\rmX_n^\prime(t) = (\rvx_1^\prime(t), \dots, \rvx_n^\prime(t))$ be an independent copy of $\rmX_n(t)$, and $\{\eps_{t,i,k}\}_{\{1\le t \le T, 1\le i \le n, 1 \le k \le n\}}$ be i.i.d. Rademacher random variables, independent of $\rmX_n(t), \rmX_n^\prime(t)$, which take values in $\{1, -1\}$, each with probability $1/2$. As $L^\star(\rvh) = \E[L_n^\prime(\rvh) \mid \rmX_n(t)]$, by Jensen's inequality, we have
    \begin{equation}
        \begin{aligned}
            \E \left[\sup_{\rvh \in \gH} L_n(\rvh) - L^\star(\rvh)\right] =& \E \left[\sup_{\rvh \in \gH} L_n(\rvh) - \E[L_n^\prime(\rvh) \mid \rmX_n(t)]\right] \\
            \le& \E \left[\sup_{\rvh \in \gH} L_n(\rvh) - L_n^\prime(\rvh)\right] \\
            =& \E \left[\sup_{\rvh \in \gH} \frac{1}{T} \sum\limits_{t=1}^T \|\frac{1}{n} \sum\limits_{i=1}^n l_t(\rvx_i(t),\rvh)\|^2 - \frac{1}{T} \sum\limits_{t=1}^T \|\frac{1}{n} \sum\limits_{i=1}^n l_t(\rvx_i^\prime(t),\rvh)\|^2\right] \\
            =& \E \left[\sup_{\rvh \in \gH} \frac{1}{T n^2} \sum\limits_{t=1}^T \sum\limits_{i=1}^n \sum\limits_{k=1}^n \left\{\langle l_t(\rvx_i(t),\rvh), l_t(\rvx_k(t),\rvh)\rangle - \langle l_t(\rvx_i^\prime(t),\rvh), l_t(\rvx_k^\prime(t),\rvh)\rangle\right\}\right] \\
            =& \E \left[\sup_{\rvh \in \gH} \frac{1}{T n^2} \sum\limits_{t=1}^T \sum\limits_{i=1}^n \sum\limits_{k=1}^n \eps_{t,i,k}\left\{\langle l_t(\rvx_i(t),\rvh), l_t(\rvx_k(t),\rvh)\rangle - \langle l_t(\rvx_i^\prime(t),\rvh), l_t(\rvx_k^\prime(t),\rvh)\rangle\right\}\right] \\
            \le& 2 \E \left[\sup_{\rvh \in \gH} \frac{1}{T n^2} \sum\limits_{t=1}^T \sum\limits_{i=1}^n \sum\limits_{k=1}^n \eps_{t,i,k} \langle l_t(\rvx_i(t),\rvh), l_t(\rvx_k(t),\rvh)\rangle\right],
        \end{aligned}
    \end{equation}
    where we denoted $l_t(\rvx_i(t),\rvh) \triangleq \varphi_t(\rvx_i(t), \rvh) - \varphi_t(\rvx_i(t), \rvh^\star) - \omega_{t,n}^{(i)}$ for simplicity. The first inequality is due to Jensen's inequality, and the last equality is due to the symmetry of Rademacher random variables.

    Denote $Z_n(\rvh) \triangleq \frac{1}{T n^2} \sum\limits_{t=1}^T \sum\limits_{i=1}^n \sum\limits_{k=1}^n \eps_{t,i,k} \langle l_t(\rvx_i(t),\rvh), l_t(\rvx_k(t),\rvh)\rangle$, we have 
    \begin{equation}
        Z_n(\rvh) - Z_n(\rvg) = \frac{1}{T n^2} \sum\limits_{t=1}^T \sum\limits_{i=1}^n \sum\limits_{k=1}^n G_{t,i,k}(\rvh, \rvg).
    \end{equation}
    where we denoted $G_{t,i,k}(\rvh, \rvg) \triangleq \eps_{t,i,k} \{\langle l_t(\rvx_i(t),\rvh), l_t(\rvx_k(t),\rvh)\rangle - \langle l_t(\rvx_i(t),\rvg), l_t(\rvx_k(t),\rvg)\rangle\}$. For $G_{t,i,k}(\rvh, \rvg)$, we have 
    \begin{equation}
            G_{t,i,k}(\rvh, \rvg) = \eps_{t,i,k} \left\{\langle l_t(\rvx_i(t),\rvh) - l_t(\rvx_i(t),\rvg), l_t(\rvx_k(t),\rvh)\rangle + \langle l_t(\rvx_i(t),\rvg), l_t(\rvx_k(t),\rvh) - l_t(\rvx_k(t),\rvg)\rangle \right\}.
    \end{equation}
    We split this to two parts. For the first one, we have
    \begin{equation}
        \begin{aligned}
            G_{t,i,k}^{(1)}(\rvh, \rvg) \triangleq& \eps_{t,i,k} \langle l_t(\rvx_i(t),\rvh) - l_t(\rvx_i(t),\rvg), l_t(\rvx_k(t),\rvh)\rangle \\
            =& \eps_{t,i,k} \langle \varphi_t(\rvx_i(t), \rvh) - \varphi_t(\rvx_i(t), \rvg), \varphi_t(\rvx_k(t), \rvh) - \varphi_t(\rvx_k(t), \rvh^\star) - \omega_{t,n}^{(k)}\rangle \\
            =& \Xi_1(i,k, \rvh, \rvg) + \Xi_2(i,k, \rvh, \rvg),
        \end{aligned}
    \end{equation}
    where we denoted
    \begin{align}
        \Xi_1(i,k, \rvh, \rvg) \triangleq& \eps_{t,i,k} \langle \varphi_t(\rvx_i(t), \rvh) - \varphi_t(\rvx_i(t), \rvg), \varphi_t(\rvx_k(t), \rvh) - \varphi_t(\rvx_k(t), \rvh^\star)\rangle \\
        \Xi_2(i,k, \rvh, \rvg) \triangleq& \eps_{t,i,k} \langle \varphi_t(\rvx_i(t), \rvh) - \varphi_t(\rvx_i(t), \rvg), - \omega_{t,n}^{(k)}\rangle
    \end{align}

    For $\Xi_1(i,k, \rvh, \rvg)$, we have
    \begin{equation}
        |\Xi_1(i,k, \rvh, \rvg)| \le 2B_t M_t d(\rvh, \rvg).
    \end{equation}
    which means $\Xi_1(i,k, \rvh, \rvg)$ is sub-Gaussian with variance proxy of $(2B_t M_t d(\rvh, \rvg))^2$.

    As for the second term $\Xi_2(i,k, \rvh, \rvg)$, notice that the gaussian random variable $\omega_{t,n}^{(k)}$ and the Rademacher random variable $\eps_{t,i,k}$ are both symmetric, thus the law of $\Xi_2(i,k,, \rvh, \rvg)$ is same as the law of $\Xi_2^\prime(i,k, \rvh, \rvg) \triangleq \langle \varphi_t(\rvx_i(t), \rvh) - \varphi_t(\rvx_i(t), \rvg), \omega_{t,n}^{(k)}\rangle$. Therefore, we can calculate the expectation via the properties of gaussian random variable.
    \begin{equation}
        \begin{aligned}
            \E\left[\ef^{\lambda \Xi_2(i,k,, \rvh, \rvg)}\right] =& \E\left[\ef^{\lambda \Xi_2^\prime(i,k,, \rvh, \rvg)}\right] \\
            =& \E\left[\E\left[\ef^{\lambda \Xi_2^\prime(i,k,, \rvh, \rvg)}\mid \rvx_i(t)\right]\right] \\
            =& \E\left[\ef^{\lambda^2 \sigma_t^2 \|\varphi_t(\rvx_i(t), \rvh) - \varphi_t(\rvx_i(t), \rvg)\|^2 /2}\right] \\
            \le & \ef^{\lambda^2 \sigma_t^2 M_t^2 d(\rvh, \rvg)^2 /2}.
        \end{aligned}
    \end{equation}
    which means $\Xi_2(i,k, \rvh, \rvg)$ is sub-Gaussian with variance proxy of $(\sigma_t M_t d(\rvh, \rvg))^2$. 

    Now, we know that $G_{t,i,k}^{(1)}(\rvh, \rvg)$ is sub-Gaussian with variance proxy of $\left[(2B_t +\sigma_t) M_t d(\rvh, \rvg)\right]^2$. Through the similar routine, it can be claimed that $G_{t,i,k}^{(1)}(\rvh, \rvg)$ is also sub-Gaussian with variance proxy of $\left[(2B_t +\sigma_t) M_t d(\rvh, \rvg)\right]^2$. Therefore, $G_{t,i,k}(\rvh, \rvg)$ is sub-Gaussian with variance proxy of $\nu_t^2 \triangleq 4\left[(2B_t +\sigma_t) M_t d(\rvh, \rvg)\right]^2$.

    Then, we split the sum over $\left\{G_{t,i,k}(\rvh, \rvg)\right\}_{t,i,k}$ over the index $i,k$ to several parts, within each part there are independent items.
    \begin{equation}\label{eq:diagSplit}
        \begin{aligned}
            \sum\limits_{i=1}^n \sum\limits_{k=1}^n G_{t,i,k}(\rvh, \rvg) =& \sum\limits_{i=1}^n G_{t,i,i}(\rvh, \rvg) + \sum\limits_{i=1}^{n-1} G_{t,i,i+1}(\rvh, \rvg) + \cdots + \sum\limits_{i=1}^{2} G_{t,i,i+n-2}(\rvh, \rvg) + G_{t,1,n}(\rvh, \rvg) \\
            &+ \sum\limits_{i=1}^{n-1} G_{t,i+1,i}(\rvh, \rvg) + \cdots + \sum\limits_{i=1}^{2} G_{t,i+n-2,i}(\rvh, \rvg) + G_{t,n,1}(\rvh, \rvg).
        \end{aligned}
    \end{equation}
    Because within each summation part, the terms are totally independent and all sub-Gaussian with variance proxy of $\nu_t^2$, we have $\sum\limits_{i=1}^{k} G_{t,i,i+n-k}(\rvh, \rvg)$ and $\sum\limits_{i=1}^{k} G_{t,i+n-k,i}(\rvh, \rvg)$ are both sub-Gaussian with variance proxy of $k\nu_t^2$, for $k=1,\dots,n$. Therefore, $\sum\limits_{i=1}^n \sum\limits_{k=1}^n G_{t,i,k}(\rvh, \rvg)$ is sub-Gaussian with variance proxy of $(\sqrt{n} + \sqrt{n-1} + \cdots + \sqrt{1} + \sqrt{n-1} + \cdots + \sqrt{1})^2\nu_t^2$. While
    \begin{equation}
        (\sqrt{n} + \sqrt{n-1} + \cdots + \sqrt{1} + \sqrt{n-1} + \cdots + \sqrt{1})^2\nu_t^2 \le \frac{16}{9}(n+1)^3\nu_t^2,
    \end{equation}
    we have $\sum\limits_{i=1}^n \sum\limits_{k=1}^n G_{t,i,k}(\rvh, \rvg)$ is sub-Gaussian with variance proxy of $ \frac{16}{9}(n+1)^3\nu_t^2$.

    Substitute this result to $Z_n(\rvh) - Z_n(\rvg)$, we have that $Z_n(\rvh) - Z_n(\rvg)$ is sub-Gaussian with variance proxy of $ \frac{16 (n+1)^3}{9T^2 n^4}(\sum\limits_{t=1}^T \nu_t)^2$. Denote $B \triangleq \max_t B_t$, $M \triangleq \max_t M_t$ and $\sigma \triangleq \max_t \sigma_t$, we finally obtain that $Z_n(\rvh) - Z_n(\rvg)$ is sub-Gaussian with variance proxy of $ \frac{512(2B +\sigma)^2 M^2 d(\rvh, \rvg)^2}{9n} $.

    With the fact that $\{Z_n(\rvh)\}_{\rvh\in\gH}$ is a separable sub-Gaussian process in the metric space $(\gH, \tilde{d})$ where $\tilde{d}(\rvh,\rvg) = \frac{16\sqrt{2}(2B +\sigma) M d(\rvh, \rvg)}{\sqrt{n}} $. Now, the Dudley's inequality Lemma \ref{le:Dudley} and Lemma \ref{le:BoundedEntropy} yield
    \begin{equation}
        \begin{aligned}
            \E \left[\sup_{\rvh \in \gH} L_n(\rvh) - L^\star(\rvh)\right] \le& 2\E \left[\sup_{\rvh \in \gH} Z_n(\rvh)\right] \\
            \le& 24 \int_{0}^{\infty} \sqrt{\log N(\gH,\tilde{d},\eps)} \df \eps \\
            \le& \frac{768\sqrt{6h}(2B+\sigma)\diam(\gH)M}{\sqrt{n}}.
        \end{aligned}
    \end{equation}

    Substituting this estimation to the Markov's inequality \ref{le:SIMarkov} yields
    \begin{equation}
        \gP\left(d(\hat{\rvh}_n, \rvh^\star) \ge \eps \right) \le \frac{2}{\eta(\eps)} \E \left[\sup_{\rvh \in \gH} |L_n(\rvh) - L^\star(\rvh)|\right] \le \frac{C}{\eta(\eps)\sqrt{n}}.
    \end{equation}
    where $C = 3072\sqrt{6h}(2B+\sigma)\diam(\gH)M$ is a constant. As stated in Lemma \ref{le:SIMarkov}, we can choose $\eta(\eps) = \frac{1}{2}\left(\inf_{d(\rvh, \rvh^\star) \ge \eps} L^\star(\rvh) - L^\star(\rvh^\star)\right)$, which is independent of the population size. 
\end{proof}

\subsection{Estimation via Observations of Weakly-Dependent Complex Network Dynamical System}\label{sec:EstimationWD}

The preceding subsection \ref{sec:EstimationIID} established a nonasymptotic scaling guarantee for hyperparameter estimation in complex network dynamical systems with independent and identically distributed nodes' states. However, the core premise of complex networks is interconnectivity, making the assumption of independence a significant limitation for real-world applications. The proof strategy in Theorem \ref{thm:ConsistencyIID}, which leverages the properties of independent sums of random variables, is no longer applicable in the presence of dependencies among nodes.

We now turn to the central contribution of this work that extends the consistency results to the more realistic and challenging setting of weakly-dependent complex network dynamical systems. To overcome the technical challenges posed by dependencies, we employ the concept of $\beta$-mixing to formally quantify the degree of dependence among the nodes' states. We then develop a proof based on an independent approximation technique, which allows us to control the deviation introduced by dependencies and establish consistency results analogous to those in the independent case. 

Firstly, we give the definition of weakly dependence by $\beta$-mixing coefficient.

\begin{defn}[$\beta$-mixing Coefficient]\label{defn:betaMixing}
    Let $\cA$ and $\cB$ be two $\sigma$-fields on the same probability space $(\Omega, \cF, \gP)$. Let the probability measure $\gP_{\cA \otimes \cB}$ be defined on $(\Omega \times \Omega, \cA \otimes \cB)$ as the image of $\gP$ by the canonical injection $\pi$ from $(\Omega, \cF, \gP)$ into $(\Omega \times \Omega, \cA \otimes \cB)$ defined by $\pi(\omega) = (\omega, \omega)$. Then 
    \begin{equation}
        \gP_{\cA \otimes \cB}(A \times B) = \gP(A \cap B), \text{ for } A \in \cA, B \in \cB.
    \end{equation}
    Let $\gP_{\cA}$ and $\gP_{\cB}$ be the restriction of $\gP$ on $\cA$ and $\cB$ respectively. The $\beta$-mixing coefficient, or the coefficient of absolute regularity, is defined by
    \begin{equation}
        \beta(\cA, \cB) = \sup_{C \in \cA \otimes \cB} \left| \gP_{\cA \otimes \cB}(C) - \gP_{\cA}\otimes \gP_{\cB}(C) \right|.
    \end{equation}
    Moreover, for a random process $\{\rvx_t\}_{t \in \sN}$, the uniform $\beta$-mixing coefficient of $\{\rvx_t\}_{t \in \sN}$ is defined as
    \begin{equation}
        \beta_k = \sup_{t \in \sN} \beta\left(\sigma(\rvx_t), \sigma(\{\rvx_{t+s}: s \ge k\})\right), \text{ for } k \ge 1.
    \end{equation}
\end{defn}

The estimation problem of the weakly-dependent system is defined as follows.

\begin{problem}[\textbf{Hyperparameter Estimation via Observation of Complex Network Dynamical System under Weakly-Dependent Condition}]\label{defn:ProbWD}
Consider the nodes' states in Problem \ref{defn:ProbGen} are weakly-dependent with $\beta$-mixing coefficient $\beta_k$ of polynomial decay rate such that $\beta_k \le k^{-\lambda}$ with $\lambda > 0$, that is, $\beta_k = \sup_{t \in \sN} \beta\left(\sigma(\rvs_t), \sigma(\{\rvs_{t+s}: s \ge k\})\right) \le k^{-\lambda}$. And the other settings are the same as Problem \ref{defn:ProbIID}. 
\end{problem}

A core technical tool in our proof is the independent approximation lemma proposed by Rio \cite{rio2013inequalities}, which allows us to approximate dependent random variables with independent ones while controlling the approximation error using the $\beta$-mixing coefficient.

\begin{lemma}[Independent Approximation \cite{rio2013inequalities}]\label{le:indepApprox}
    Let $\cA$ be a $\sigma$-field on the probability space $(\Omega, \cF, \gP)$, and $\rvx$ be a random variable defined on this probability space with values in some Polish space. Then there exists a random variable $\rvx^\prime$, with the same law as $\rvx$, independent of $\cA$, such that $\gP(\rvx^\prime \ne \rvx) = \beta(\sigma(\rvx), \cA)$.
\end{lemma}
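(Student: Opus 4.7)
The plan is to construct $\rvx'$ via a maximal coupling performed conditionally on $\cA$. Since the value space is Polish, a regular conditional probability $P_{\rvx|\cA}(\cdot)(\omega)$ exists; write $\mu = P_\rvx$ for the marginal law and $\mu_\omega = P_{\rvx|\cA}(\cdot)(\omega)$ for the conditional law. The first step is to identify the $\beta$-coefficient in Definition \ref{defn:betaMixing} with an expected total variation distance,
\begin{equation*}
\beta(\sigma(\rvx), \cA) \;=\; \E\bigl[\|P_{\rvx|\cA} - \mu\|_{\mathrm{TV}}\bigr],
\end{equation*}
which follows from the standard disintegration formula applied to the joint law and the product law on $(\Omega, \sigma(\rvx) \otimes \cA)$. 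This reduces the lemma to constructing, on a possibly enlarged probability space, a random variable $\rvx'$ whose conditional law given $\cA$ equals $\mu$ and for which $\gP(\rvx' \ne \rvx \mid \cA)(\omega) = \|\mu_\omega - \mu\|_{\mathrm{TV}}$. Forcing the conditional law of $\rvx'$ to be the constant $\mu$ is precisely what will deliver the target marginal $\rvx' \sim \mu$ and the independence $\rvx' \perp \cA$ simultaneously.

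The second step is the explicit maximal coupling. After enlarging the probability space to include a $\mathrm{Uniform}([0,1])$ random variable $U$ that is independent of $(\rvx, \cA)$, I would work with densities $p = \df \mu/\df \lambda$ and $q_\omega = \df \mu_\omega/\df \lambda$ with respect to a common dominating measure $\lambda$, and set $r_\omega = \min(p, q_\omega)$ and $a_\omega = p - r_\omega \ge 0$. The rule is: if $U \le r_\omega(\rvx)/q_\omega(\rvx)$, set $\rvx' = \rvx$; otherwise draw $\rvx'$ from the normalized residual $a_\omega/\|\mu - \mu_\omega\|_{\mathrm{TV}}$ via an independent randomization (which may require a further enlargement). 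A direct density computation then verifies that the conditional density of $\rvx'$ given $\cA$ equals $r_\omega + a_\omega = p$ for $\gP$-a.e.\ $\omega$, and that $\gP(\rvx' \ne \rvx \mid \cA)(\omega) = 1 - \int r_\omega \df \lambda = \|\mu_\omega - \mu\|_{\mathrm{TV}}$ by construction.

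Combining the two steps, taking the expectation of the conditional disagreement probability against $\gP$ on $\cA$ and applying the disintegration identity from the first step yields $\gP(\rvx' \ne \rvx) = \beta(\sigma(\rvx), \cA)$. The main obstacle in this plan is the measurability bookkeeping: the densities $q_\omega$ and $r_\omega$ depend on $\omega$ through $\cA$, so one must produce versions that are jointly measurable in the value $x$ and the outcome $\omega$, and then arrange the auxiliary randomization so that the final $\rvx'$ is $\bigl(\sigma(\rvx) \vee \cA \vee \sigma(U)\bigr)$-measurable. This step relies essentially on the Polish-space hypothesis, which provides regular conditional probabilities and permits the standard measurable-selection arguments on the space of probability measures to go through.
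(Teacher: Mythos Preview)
The paper does not supply its own proof of this lemma; it is quoted verbatim as a result of Rio and used as a black box in the proof of Theorem~\ref{thm:ConsistencyWD}. Your proposal is the classical Berbee--Goldstein maximal-coupling construction and is correct in outline: the identification $\beta(\sigma(\rvx),\cA)=\E\bigl[\|P_{\rvx\mid\cA}-\mu\|_{\mathrm{TV}}\bigr]$ is the standard disintegration characterisation of the $\beta$-coefficient, and the fibrewise maximal coupling you describe (accept when $U\le r_\omega(\rvx)/q_\omega(\rvx)$, otherwise resample from the normalised residual $a_\omega$) does force the conditional law of $\rvx'$ given $\cA$ to be the constant $\mu$, which yields both the marginal law and the independence at once. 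One small point worth making explicit in your write-up: on the rejection event the supports of $q_\omega-r_\omega$ and $p-r_\omega$ are disjoint, which is what guarantees that the conditional disagreement probability is exactly $\|\mu_\omega-\mu\|_{\mathrm{TV}}$ rather than merely bounded above by it; this is needed for the equality $\gP(\rvx'\ne\rvx)=\beta$ rather than just $\le\beta$. The measurability issues you flag (joint measurability of $(\omega,x)\mapsto q_\omega(x)$ and the measurable realisation of the residual draw) are indeed the only real work, and the Polish-space assumption is exactly what makes them routine.
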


\begin{thm}\label{thm:ConsistencyWD}
    For the hyperparameter estimation Problem \ref{defn:ProbWD} with weakly-dependent nodes of $\beta$-mixing coefficient $\beta_k$ of polynomial decay rate such that $\beta_k \le k^{-\lambda}$ with $\lambda > 0$, assuming the complex network dynamical system (\ref{eq:dynSysGenODE}) satisfying Assumption \ref{assum:dynSysLip} and Assumption \ref{assum:dynSysCompactMeasure}, and the algorithm $\gA$ used for finding the minimizer of empirical objective function satisfying Assumption \ref{assum:minimizer}, moreover, if the following conditions:
    \begin{enumerate}
        \item the estimation Problem \ref{defn:ProbWD} is identifiable, \\
        \item the nonlinear observation operator $\phi_t$ is bounded and uniformly Lipschitz continuous with respect to $\rvh \in \gH$ for $t=1,\dots,T$,\\
        \item the hyperparameter space $\gH$ is compact
    \end{enumerate}
    are satisfied, then, with $\hat{\rvh}_n$ being the estimator obtained by the algorithm $\gA$, for any $\eps > 0$, under metric $d:\gH \times \gH \to \R^+ \cup \{0\}$, we have the following consistency guarantee result:
    \begin{equation}
        \gP\left(d(\hat{\rvh}_n, \rvh^\star) \ge \eps \right) \le \frac{C}{\eta(\eps)}n^{-\frac{\lambda}{1+2\lambda}},
    \end{equation}
    where $C$ is a constant, and we can choose $\eta(\eps) = \frac{1}{2}\left(\inf_{d(\rvh, \rvh^\star) \ge \eps} L^\star(\rvh) - L^\star(\rvh^\star)\right)$, which is independent of the population size.
\end{thm}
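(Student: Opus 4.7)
The plan is to mirror the structure of the proof of Theorem \ref{thm:ConsistencyIID}, substituting the node independence by a block-independent coupling obtained from Lemma \ref{le:indepApprox}. Since identifiability of $L^\star$ is hypothesised directly (condition 1), the Lipschitz continuity of $\varphi_t=\phi_t\circ\tilde\psi_t$ in $\rvh$ given by Lemma \ref{le:rosenblatt} under Assumption \ref{assum:dynSysCompactMeasure} and condition 2, together with the compactness of $\gH$ and Lemma \ref{le:identifiable}, upgrades it to strict identifiability at $\rvh^\star$. Lemma \ref{le:SIMarkov} then reduces the claim to controlling $\E[\sup_{\rvh\in\gH}|L_n(\rvh)-L^\star(\rvh)|]$.

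To exploit the $\beta$-mixing hypothesis I would construct a block-independent coupling of $(\rvx_i(t))_{i=1}^n$: partition $\{1,\dots,n\}$ into $m$ blocks $B_1,\dots,B_m$ of length $p$ separated by gaps of length $q$ with $m(p+q)\le n$, and iterate Lemma \ref{le:indepApprox} along the blocks. This produces a coupled process $\tilde\rmX_n(t)$ whose block restrictions $(\tilde\rvx_i(t))_{i\in B_j}$ share the joint law of the original block contents, are mutually independent across $j$, and coincide with the original sample on a good event of probability at least $1-(m-1)\beta_q\ge 1-m q^{-\lambda}$. Writing $\tilde L_n$ for the empirical objective built from $\tilde\rmX_n(t)$, one splits
\[
\E\sup_{\rvh\in\gH}|L_n-L^\star|\;\le\;\E\sup_{\rvh\in\gH}|L_n-\tilde L_n|+\E\sup_{\rvh\in\gH}|\tilde L_n-L^\star|.
\]
On the good event the first summand is supported only on the gap indices and is uniformly bounded via condition 2; on the bad event it is bounded by the worst-case value of $L_n-\tilde L_n$ times $m q^{-\lambda}$. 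For the second summand I would repeat the Rademacher-symmetrisation and chaining argument used to prove Theorem \ref{thm:ConsistencyIID}. The symmetrisation is unaffected, because it only requires an independent copy of the coupled sample. The key observation is that the sub-Gaussian estimate $\E[\ef^{\lambda\sum_i\eps_i H_i}]\le \ef^{\lambda^2\sum_i\|H_i\|_\infty^2/2}$ relies only on independence of the Rademacher signs from the data; hence it transfers verbatim once one reorganises $\sum_{i,k}G_{t,i,k}(\rvh,\rvg)$ by block pairs, so that contributions from disjoint block pairs are genuinely independent. Applying an analogue of the diagonal-band decomposition (\ref{eq:diagSplit}) at the block level yields a sub-Gaussian variance proxy for $\tilde Z_n(\rvh)-\tilde Z_n(\rvg)$ of order $d(\rvh,\rvg)^2\,p/n$, and then Dudley's bound (Lemma \ref{le:Dudley}) with the entropy estimate of Lemma \ref{le:BoundedEntropy} contributes a chaining term of order $\sqrt{p/n}$.

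Combining the two contributions and optimising $p$ and $q$ under the constraints $m(p+q)\le n$ and $\beta_q\le q^{-\lambda}$ produces the advertised rate $n^{-\lambda/(1+2\lambda)}$, with the $\eta(\eps)$ factor inherited unchanged from Lemma \ref{le:SIMarkov}. The main technical obstacle I expect is the variance-proxy bookkeeping in the third step: when $\sum_{i,k}G_{t,i,k}$ is decomposed into $m^2$ block-pair sums, one must verify that the pattern of dependencies between block pairs sharing exactly one block does not inflate the total variance proxy beyond $O(p)$ per block pair, so that Dudley's integral stays of order $\sqrt{p/n}$; the coupling-error bookkeeping (including the gap-index residual in the first summand) and the final $(p,q)$-optimisation are mechanical by comparison.
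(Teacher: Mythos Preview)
Your overall plan is sound and mirrors the paper's logic at the top level (strict identifiability via Lemma~\ref{le:identifiable}, reduction by Lemma~\ref{le:SIMarkov}, Rio coupling, then sub-Gaussian chaining with Dudley). But the implementation you sketch is genuinely different from the paper's, and in one respect less convenient.

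\textbf{Blocking scheme.} The paper does \emph{not} use Bernstein big-block/small-block. It thins the index set modulo $\mathfrak G=\lfloor n^{\gamma}\rfloor$: the $k$-th subsequence is $\rvx_k,\rvx_{k+\mathfrak G},\rvx_{k+2\mathfrak G},\dots$, so consecutive members sit at distance $\mathfrak G$ in the original ordering. Rio's lemma is then applied recursively \emph{within} each subsequence, producing per-element copies with $\gP(\rvx_i'\neq\rvx_i)\le\beta_{\mathfrak G}$. This (i) eliminates gap indices entirely, (ii) matches the one-point-to-future form of $\beta_k$ in Definition~\ref{defn:betaMixing}, and (iii) lets the paper retain the \emph{element-level} diagonal decomposition \eqref{eq:diagSplit} from the i.i.d.\ proof, splitting each diagonal further into $\mathfrak G$ modular pieces whose summands are independent. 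The resulting variance proxy is $O(\mathfrak G/n)$; the approximation error is bounded termwise by $C\beta_{\mathfrak G}\le Cn^{-\gamma\lambda}$ (with a Gautschi-inequality step for $\E\|\omega\|$); and the single-parameter balance $\gamma=\tfrac{1}{1+2\lambda}$ gives the exponent directly.

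\textbf{Where your route needs more care.} Your consecutive-block coupling requires $\beta(\sigma(B_j),\sigma(B_{<j}))\le\beta_q$, a block-to-block bound that is strictly stronger than the point-to-future $\beta_k$ assumed in Problem~\ref{defn:ProbWD}. The gap indices are also not as ``mechanical'' as you suggest: if you discard them you pick up an additive $q/(p+q)$ error that has to be balanced alongside $q^{-\lambda}$ and $\sqrt{p/n}$, and a naive three-way optimisation yields $n^{-\lambda/(1+3\lambda)}$ rather than the target; if you keep them they stay correlated with adjacent coupled blocks and contaminate the independence you need for the chaining. Finally, your block-level diagonal decomposition groups many terms sharing the same noise coordinate $\omega^{(k)}$ into a single block-pair sum $S_{JK}$, and the per-term sub-Gaussian bound for the $\Xi_2$ piece does not sum as cleanly there as it does along the paper's element-level diagonals. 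None of this is fatal, but the paper's modular thinning sidesteps all three issues and reduces the optimisation to one parameter.

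\textbf{Order of operations.} A smaller difference: the paper symmetrises first (passing from $L_n$ to the Rademacher process $Z_n$) and only then couples, splitting $\E\sup Z_n\le\E\sup(Z_n-Z_n')+\E\sup Z_n'$. You couple at the level of $L_n$ and symmetrise afterwards. Either order can be made to work, but the paper's choice makes the termwise approximation-error bound especially transparent.
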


Analogous to Theorem \ref{thm:ConsistencyIID}, Theorem \ref{thm:ConsistencyWD} provides a nonasymptotic bound for weakly-dependent systems, demonstrating that the convergence rate is at least on the order of $n^{-\frac{\lambda}{1+2\lambda}}$. The exponent reflects the cost of dependencies, as a slower decay of correlation (smaller $\lambda$) reduces the effective amount of information and thus naturally leads to slower convergence. Notably, when $\lambda \to \infty$, the systems tends to be independent, the exponent $-\frac{\lambda}{1+2\lambda}$ approaches $-1/2$, recovering the result in Theorem \ref{thm:ConsistencyIID}. The identifiability condition required by this theorem is more direct than the injective condition required by Theorem \ref{thm:ConsistencyIID}. This is because, in the weakly-dependent setting, the dependencies among nodes complicate the analysis, making it challenging to establish identifiability through injectivity alone by Lemma \ref{le:uniqueMin}. Assuming identifiability directly is a standard and practical approach in the analysis of complex system and is feasible to verify in practice.

\begin{proof}
    The brief proof routine is similar to the proof of Theorem \ref{thm:ConsistencyIID}. The main difference is that the system $\{\rvs_i(t)\}_{1\le i \le n}$ is weakly-dependent, thus we need to take care of the dependence by the mixing coefficient.

    Firstly, $L^\star$ is strictly identifiable by the identifiability and the continuity of $\varphi$ and compactness of $\gH$ by Lemma \ref{le:identifiable}.

    By the definition of $\beta$-mixing coefficient and Lemma \ref{le:rosenblatt}, we have $\beta(\sigma(\rvx_i(t)), \sigma(\{\rvx_{i+s}(t): s \ge k\})) \le \beta_k \le k^{-\lambda}$ for $1\le i \le n$ and $k \ge 1$. 

    As the proof of Theorem \ref{thm:ConsistencyIID}, it is sufficient to bound $\E\left[\sup_{\rvh \in \gH} Z_n(\rvh)\right]$ by the symmetric method. But now, the system $\{\rvx_i(t)\}_{1\le i \le n}$ is weakly-dependent, if we took the same proof by estimating the sub-Gaussian property of the summation, the dependence would enlarge the variance proxy to order $\gO(1)$ so that the consistency failed. Thus, we need to take care of the dependence by the mixing coefficient. This will ensure that the variance does not grow uncontrollably and maintains the consistency of our estimates. We split the supremum over $\rvh$ to two parts by the independent approximation method.

    For $\{\rvx_i(t)\}_{1\le i \le n}$, we divide this sequence in module manner into $\lfloor n^{\gamma} \rfloor$ subsequences, where $ 0< \gamma <1$. Denote the gap as $\mathfrak{G} \triangleq \lfloor n^{\gamma} \rfloor$, while the length as $\mathfrak{L} \triangleq \lceil n / \mathfrak{G} \rceil$. The $k$-th subsequence is constructed as $\{\rvx_{r}^{(k)}(t)\}_{1 \le r \le \mathfrak{L}}$, where $1\le k \le \mathfrak{G}$, such that
    \begin{equation}
        \rvx_{r}^{(k)}(t) =\left\{\begin{matrix}
            &\rvx_{(r-1) \cdot \mathfrak{G} + k}(t), &\text{ for } (r-1) \cdot \mathfrak{G} + k \le n, \\
            &0, &\text{ otherwise. }
           \end{matrix}\right.
    \end{equation}
    The length of each subsequence is at most $\mathfrak{L}$. 
    
    For each subsequence, we can recursively apply the independent approximation Lemma \ref{le:indepApprox} to obtain an independent copy subsequence $\{\rvx_{r}^{(k)\prime}(t)\}_{1 \le r \le \mathfrak{L}}$ of $\{\rvx_{r}^{(k)}(t)\}_{1 \le r \le \mathfrak{L}}$ such that $\gP(\rvx_{r}^{(k)\prime}(t) \ne \rvx_{r}^{(k)}(t)) \le \beta_{\mathfrak{G}}$. To achieve this, we firstly consider $\sigma\left(\rvx_{2}^{(k)}(t), \dots, \rvx_{\mathfrak{L}}^{(k)}(t)\right)$, the $\sigma$-field generated by the random variables after index $2$. By Lemma \ref{le:indepApprox}, there exists a random variable $\rvx_{1}^{(k)\prime}(t)$, with the same law as $\rvx_{1}^{(k)}(t)$, independent of $\sigma\left(\rvx_{2}^{(k)}(t), \dots, \rvx_{\mathfrak{L}}^{(k)}(t)\right)$, such that 
    \begin{equation}
        \gP(\rvx_{1}^{(k)\prime}(t) \ne \rvx_{1}^{(k)}(t)) = \beta(\sigma(\rvx_{1}^{(k)}(t)), \sigma\left(\rvx_{2}^{(k)}(t), \dots, \rvx_{\mathfrak{L}}^{(k)}(t)\right)) \le \beta_{\mathfrak{G}}.
    \end{equation}
    Having constructed $\rvx_{1}^{(k)\prime}(t)$, we can repeat this process for the remaining random variables in the subsequence. Suppose we have constructed the independent copy till index $r$, for the $(r+1)$-th random variable, we consider the $\sigma$-field generated by the original random variables after index $r+2$ and the independent copy random variables before index $r$:
    \begin{equation}
        \sigma\left(\rvx_{1}^{(k)\prime}(t), \dots, \rvx_{r}^{(k)\prime}(t), \rvx_{1}^{(k)\prime}(t), \dots, \rvx_{\mathfrak{L}}^{(k)}(t)\right).
    \end{equation}
    Lemma \ref{le:indepApprox} yields that there exists a random variable $\rvx_{r+1}^{(k)\prime}(t)$ with the same law as $\rvx_{r+1}^{(k)}(t)$, which is independent of the $\sigma$-field $\sigma\left(\rvx_{1}^{(k)\prime}(t), \dots, \rvx_{r}^{(k)\prime}(t), \rvx_{r+2}^{(k)}(t), \dots, \rvx_{\mathfrak{L}}^{(k)}(t)\right)$, such that
    \begin{equation}
        \begin{aligned}
            \gP(\rvx_{r+1}^{(k)\prime}(t) \ne \rvx_{r+1}^{(k)}(t)) =& \beta(\sigma(\rvx_{r+1}^{(k)}(t)), \sigma\left(\rvx_{1}^{(k)\prime}(t), \dots, \rvx_{r}^{(k)\prime}(t), \rvx_{r+2}^{(k)}(t), \dots, \rvx_{\mathfrak{L}}^{(k)}(t)\right)) \\
            =& \beta(\sigma(\rvx_{r+1}^{(k)}(t)), \sigma\left(\rvx_{r+2}^{(k)}(t), \dots, \rvx_{\mathfrak{L}}^{(k)}(t)\right))\le \beta_{\mathfrak{G}},
        \end{aligned}
    \end{equation}
    the equality holds because $\rvx_{r+1}^{(k)}(t)$ is independent of $\sigma\left(\rvx_{1}^{(k)\prime}(t), \dots, \rvx_{r}^{(k)\prime}(t)\right)$ by the construction. By induction, we completed the construction of the independent copy subsequence $\{\rvx_{r}^{(k)\prime}(t)\}_{1 \le r \le \mathfrak{L}}$. For each $1\le k \le \mathfrak{G}$, we constructe the corresponding independent copy subsequence as the above procedure. Then, the union of all independent copy subsequences together form a copy of the original sequence $\{\rvx_i(t)\}_{1\le i \le n}$, denoted as $\{\rvx_i^\prime (t)\}_{1\le i \le n}$, with $\rvx_i^\prime (t) = \rvx_{\lceil i / \mathfrak{G} \rceil}^{(i \mod \mathfrak{G})\prime}(t)$. And the copy sequence satisfies that  
    \begin{equation}
        \gP(\rvx_i^\prime (t) = \rvx_i(t)) \le \beta_{\mathfrak{G}} \text{ for all } 1\le i \le n.
    \end{equation}

    Now, we can split the supremum into two parts as
    \begin{equation}
            \E\left[\sup_{\rvh \in \gH} Z_n(\rvh)\right] = \E\left[\sup_{\rvh \in \gH} \left\{Z_n(\rvh) - Z_n^\prime(\rvh)\right\}\right] + \E\left[\sup_{\rvh \in \gH} Z_n^\prime(\rvh)\right],
    \end{equation}
    where $Z_n^\prime(\rvh) \triangleq \frac{1}{T n^2} \sum\limits_{t=1}^T \sum\limits_{i=1}^n \sum\limits_{k=1}^n \eps_{t,i,k} \langle l_t(\rvx_i^\prime (t),\rvh), l_t(\rvx_k^\prime (t),\rvh)\rangle$ is the summation over the copy sequence $\{\rvx_i^\prime (t)\}_{1\le i \le n}$, with $l_t$ defined in the proof of Theorem \ref{thm:ConsistencyIID}. The first is expressed as the approximation error, while the second is the independent approximation for the original supremum.
    
    For the approximation error, we expand the difference as
    \begin{equation}
        \begin{aligned}
            Z_n(\rvh) - Z_n^\prime(\rvh) =& \frac{1}{T n^2} \sum\limits_{t=1}^T \sum\limits_{i=1}^n \sum\limits_{k=1}^n \eps_{t,i,k} \langle l_t(\rvx_i(t),\rvh), l_t(\rvx_k(t),\rvh)\rangle - \frac{1}{T n^2} \sum\limits_{t=1}^T \sum\limits_{i=1}^n \sum\limits_{k=1}^n \eps_{t,i,k} \langle l_t(\rvx_i^\prime (t),\rvh), l_t(\rvx_k^\prime (t),\rvh)\rangle\\
            =& \frac{1}{T n^2} \sum\limits_{t=1}^T \sum\limits_{i=1}^n \sum\limits_{k=1}^n \eps_{t,i,k} \left\{\langle l_t(\rvx_i(t),\rvh), l_t(\rvx_k(t),\rvh) - l_t(\rvx_k^\prime (t),\rvh)\rangle \right. \\
            &\left.+ \langle l_t(\rvx_i(t),\rvh) - l_t(\rvx_i^\prime (t),\rvh), l_t(\rvx_k^\prime (t),\rvh)\rangle\right\}.
        \end{aligned}
    \end{equation}
    For each first inner product, we have
    \begin{equation}
        \begin{aligned}
            &\left|\langle l_t(\rvx_i(t),\rvh), l_t(\rvx_k(t),\rvh) - l_t(\rvx_k^\prime (t),\rvh)\rangle \right|\\
            =& \left|\langle \varphi_t(\rvx_i(t), \rvh) - \varphi_t(\rvx_i(t), \rvh^\star) - \omega_{t,n}^{(i)}, \varphi_t(\rvx_k(t), \rvh) - \varphi_t(\rvx_k^\prime (t), \rvh) - (\varphi_t(\rvx_k(t), \rvh^\star) - \varphi_t(\rvx_k^\prime (t), \rvh^\star)) \rangle\right|\\
            \le& (2B_t + \|\omega_{t,n}^{(i)}\|_2) (\|\varphi_t(\rvx_k(t), \rvh) - \varphi_t(\rvx_k^\prime (t), \rvh)\|_2 + \|\varphi_t(\rvx_k(t), \rvh^\star) - \varphi_t(\rvx_k^\prime (t), \rvh^\star)\|_2) \\
            \le& 4 B_t (2B_t + \|\omega_{t,n}^{(i)}\|_2) \mathbb{I}(\rvx_k(t) \ne \rvx_k^\prime (t)),
        \end{aligned}
    \end{equation}
    where $\mathbb{I}(\cdot)$ is the indicator function. Similarly, for each second inner product, we have
    \begin{equation}
        \begin{aligned}
            &\left|\langle l_t(\rvx_i(t),\rvh) - l_t(\rvx_i^\prime (t),\rvh), l_t(\rvx_k^\prime (t),\rvh)\rangle \right|\\
            =&\left| \langle \varphi_t(\rvx_i(t), \rvh) - \varphi_t(\rvx_i^\prime (t), \rvh) - (\varphi_t(\rvx_i(t), \rvh^\star) - \varphi_t(\rvx_i^\prime (t), \rvh^\star)), \varphi_t(\rvx_k^\prime (t), \rvh) - \varphi_t(\rvx_k^\prime (t), \rvh^\star) - \omega_{t,n}^{(k)} \rangle\right|\\
            \le& (2B_t + \|\omega_{t,n}^{(k)}\|_2) (\|\varphi_t(\rvx_i(t), \rvh) - \varphi_t(\rvx_i^\prime (t), \rvh)\|_2 + \|\varphi_t(\rvx_i(t), \rvh^\star) - \varphi_t(\rvx_i^\prime (t), \rvh^\star)\|_2) \\
            \le& 4 B_t (2B_t + \|\omega_{t,n}^{(k)}\|_2) \mathbb{I}(\rvx_i(t) \ne \rvx_i^\prime (t)).
        \end{aligned}
    \end{equation}
    Notice that $\|\omega_{t,n}^{(k)}\|_2$ is of scaled chi-distribution. Therefore, we have
    \begin{equation}
        \begin{aligned}
            &\E\left[\sup_{\rvh \in \gH} \left\{Z_n(\rvh) - Z_n^\prime(\rvh)\right\}\right] \\
            \le& \E\left[\sup_{\rvh \in \gH} \frac{1}{T n^2} \sum\limits_{t=1}^T \sum\limits_{i=1}^n \sum\limits_{k=1}^n \left|\langle l_t(\rvx_i(t),\rvh), l_t(\rvx_k(t),\rvh) - l_t(\rvx_k^\prime (t),\rvh)\rangle\right| + \left| \langle l_t(\rvx_i(t),\rvh) - l_t(\rvx_i^\prime (t),\rvh), l_t(\rvx_k^\prime (t),\rvh)\rangle\right|\right]\\
            \le& \E\left[\sup_{\rvh \in \gH} \frac{1}{T n^2} \sum\limits_{t=1}^T \sum\limits_{i=1}^n \sum\limits_{k=1}^n \left(4 B_t \left(2B_t + \|\omega_{t,n}^{(i)}\|_2\right) \mathbb{I}(\rvx_k(t) \ne \rvx_k^\prime (t)) + 4 B_t \left(2B_t + \|\omega_{t,n}^{(k)}\|_2\right) \mathbb{I}(\rvx_i(t) \ne \rvx_i^\prime (t))\right)\right]\\
            =& \frac{1}{T n^2} \sum\limits_{t=1}^T \sum\limits_{i=1}^n \sum\limits_{k=1}^n \left(4 B_t \left(2B_t + \E\left[\|\omega_{t,n}^{(i)}\|_2\right]\right) \gP(\rvx_k(t) \ne \rvx_k^\prime (t)) + 4 B_t \left(2B_t + \E\left[\|\omega_{t,n}^{(k)}\|_2\right]\right) \gP(\rvx_i(t) \ne \rvx_i^\prime (t))\right)\\
            \le& \frac{1}{T n^2} \sum\limits_{t=1}^T \sum\limits_{i=1}^n \sum\limits_{k=1}^n 8 B_t \left(2B_t + \sqrt{2} \sigma_t \frac{\Gamma(\frac{r_t+1}{2})}{\Gamma(\frac{r_t}{2})}\right) \beta_{\mathfrak{G}}\\ 
            \le& 8B (2B + \sqrt{r+1} \sigma) \beta_{\mathfrak{G}}
        \end{aligned}
    \end{equation}
    where the notation $B \triangleq \max_t B_t$ and $\sigma \triangleq \max_t \sigma_t$ are defined as in the proof of Theorem \ref{thm:ConsistencyIID}, $r = \max_t r_t$, and $\Gamma(\cdot)$ is the Gamma function. The last inequality is due to the maximality of $B, \sigma, r$, and the fact which is known as Gautschi's inequality, that
    \begin{equation}
        \frac{\Gamma(x+1)}{\Gamma(x+s)} \le (x+1)^{1-s}, \text{ for } s \in (0,1),
    \end{equation}
    with assigning $x = \frac{r_t - 1}{2}$ and $s = \frac{1}{2}$. 
    
    Since $\beta_{\mathfrak{G}} \le \mathfrak{G}^{-\lambda} \le n^{-\gamma \lambda}$, therefore, we have for the approximation error that
    \begin{equation}
        \E\left[\sup_{\rvh \in \gH} \left\{Z_n(\rvh) - Z_n^\prime(\rvh)\right\}\right] \le 8B (2B + \sqrt{r+1} \sigma ) n^{-\gamma \lambda}.
    \end{equation}

    For the independent approximation part, we can apply the same proof as in Theorem \ref{thm:ConsistencyIID} to obtain that $G_{t,i,k}^\prime(\rvh, \rvg)$ is sub-Gaussian with variance proxy of $\nu_t^2 \triangleq 4\left[(2B_t +\sigma_t) M_t d(\rvh, \rvg)\right]^2$, as similarly defined in the proof of Theorem \ref{thm:ConsistencyIID}, where $G_{t,i,k}^\prime(\rvh, \rvg) \triangleq \eps_{t,i,k} \{\langle l_t(\rvx_i^\prime (t),\rvh), l_t(\rvx_k^\prime (t),\rvh)\rangle - \langle l_t(\rvx_i^\prime (t),\rvg), l_t(\rvx_k^\prime (t),\rvg)\rangle\}$ is replaced the original sequence with the independent copy. Then, concerning the constructed independent subsequences, we need to split the summation over $\left\{G_{t,i,k}^\prime(\rvh, \rvg)\right\}_{t,i,k}$ over the index $i,k$ into several parts more carefully.

    The first step is same to split the summation in diagonal manner as in (\ref{eq:diagSplit}). But now, notice that within each diagonal summation, the elements are no longer totally independent, because we constructed the independent subsequences with gap $\mathfrak{G}$. Therefore, we need to split each diagonal summation into $\mathfrak{G}$ parts once more, where each part contains independent items. For the diagonal summation $\sum\limits_{i=1}^{k} G_{t,i,i+n-k}^\prime(\rvh, \rvg)$, $1 \le k \le n$, we split it with the constructed independent subsequences of gap $\mathfrak{G}$, that is,
    \begin{equation}
            \sum\limits_{i=1}^{k} G_{t,i,i+n-k}^\prime(\rvh, \rvg) = \sum\limits_{s=1}^{\mathfrak{G}} \sum\limits_{r=1}^{\lceil k / \mathfrak{G}\rceil} G_{t,(r-1) \cdot \mathfrak{G} + s,(r-1) \cdot \mathfrak{G} + s + n-k}^\prime(\rvh, \rvg).       
    \end{equation}
    For each $k$, the summation $\sum\limits_{r=1}^{\lceil k / \mathfrak{G}\rceil} G_{t,(r-1) \cdot \mathfrak{G} + s,(r-1) \cdot \mathfrak{G} + s + n-k}^\prime(\rvh, \rvg)$ is computed over totally independent terms, and all terms are sub-Gaussian with variance proxy of $\nu_t^2$. Therefore, we have that $\sum\limits_{r=1}^{\lceil k / \mathfrak{G}\rceil} G_{t,(r-1) \cdot \mathfrak{G} + s,(r-1) \cdot \mathfrak{G} + s + n-k}^\prime(\rvh, \rvg)$ is sub-Gaussian with variance proxy of $\lceil k / \mathfrak{G}\rceil\nu_t^2$. Thus, the total diagonal summation $\sum\limits_{i=1}^{k} G_{t,i,i+n-k}^\prime(\rvh, \rvg)$ is sub-Gaussian with variance proxy of $\mathfrak{G}^2 \lceil k / \mathfrak{G}\rceil\nu_t^2$. Then, $\sum\limits_{i=1}^n \sum\limits_{k=1}^n G_{t,i,k}^\prime(\rvh, \rvg)$ is sub-Gaussian with variance proxy of $(\sqrt{\lceil n / \mathfrak{G}\rceil} + \sqrt{\lceil (n-1) / \mathfrak{G}\rceil} + \cdots + \sqrt{1} + \sqrt{\lceil (n-1) / \mathfrak{G}\rceil} + \cdots + \sqrt{1})^2\mathfrak{G}^2\nu_t^2$. By Cauchy-Schwarz inequality and that $(n+1)/\mathfrak{G} \ge 1$, we have
    \begin{equation}
        (\sqrt{\lceil n / \mathfrak{G}\rceil} + \sqrt{\lceil (n-1) / \mathfrak{G}\rceil} + \cdots + \sqrt{1} + \sqrt{\lceil (n-1) / \mathfrak{G}\rceil} + \cdots + \sqrt{1})^2\mathfrak{G}^2\nu_t^2 \le 6(n+1)^3 \mathfrak{G}\nu_t^2.
    \end{equation}
    Therefore, $\sum\limits_{i=1}^n \sum\limits_{k=1}^n G_{t,i,k}^\prime(\rvh, \rvg)$ is sub-Gaussian with variance proxy of $6(n+1)^3 \mathfrak{G}\nu_t^2$, and $Z_n^\prime(\rvh) - Z_n^\prime(\rvg)$ is sub-Gaussian with variance proxy of $ \frac{6(n+1)^3 \mathfrak{G}}{T^2 n^4}(\sum\limits_{t=1}^T \nu_t)^2$. With the same notation $M \triangleq \max_t M_t$ defined in the proof of Theorem \ref{thm:ConsistencyIID}, we have
    \begin{equation}
        \frac{6(n+1)^3 \mathfrak{G}}{T^2 n^4}(\sum\limits_{t=1}^T \nu_t)^2 \le 6 \nu^2 \frac{(n+1)^3 \lfloor n^\gamma \rfloor}{n^4} \le 6 \nu^2 \frac{(n+1)^3}{n^{4-\gamma}} \le \frac{48 \nu^2}{n^{1-\gamma}},
    \end{equation}
    where $\nu \triangleq 2\left[(2B +\sigma) M d(\rvh, \rvg)\right]$. Then, as the same routine in the proof of Theorem \ref{thm:ConsistencyIID}, Dudley's inequality Lemma \ref{le:Dudley} and Lemma \ref{le:BoundedEntropy} yield that
    \begin{equation}
        \E\left[\sup_{\rvh \in \gH} Z_n^\prime(\rvh)\right] \le \frac{576\sqrt{h}(2B+\sigma)M\diam(\gH)}{n^{\frac{1}{2}-\frac{\gamma}{2}}}.
    \end{equation}
    
    Finally, we combine the two parts together to obtain
    \begin{equation}
            \E\left[\sup_{\rvh \in \gH} Z_n(\rvh)\right] \le 8B \left(2B + \sqrt{r+1} \sigma \right) n^{-\gamma \lambda} + \frac{576\sqrt{h}(2B+\sigma)M\diam(\gH)}{n^{\frac{1}{2}-\frac{\gamma}{2}}}.
    \end{equation}
    By the choice of $\gamma = \frac{1}{1+2\lambda}$, we have 
    \begin{equation}
        \E\left[\sup_{\rvh \in \gH} Z_n(\rvh)\right] \le \left(8B \left(2B + \sqrt{r+1} \sigma \right) + 576\sqrt{h}(2B+\sigma)M\diam(\gH)\right) n^{-\frac{\lambda}{1+2\lambda}}.
    \end{equation}
    Therefore, Lemma \ref{le:SIMarkov} yields that
    \begin{equation}
        \gP\left(d(\hat{\rvh}_n, \rvh^\star) \ge \eps \right) \le \frac{C}{\eta(\eps)} n^{-\frac{\lambda}{1+2\lambda}},
    \end{equation}
    where $C = 4 \left(8B \left(2B + \sqrt{r+1} \sigma \right) + 576\sqrt{h}(2B+\sigma)M\diam(\gH)\right) $ is a constant. As stated in Lemma \ref{le:SIMarkov}, we can choose $\eta(\eps) = \frac{1}{2}\left(\inf_{d(\rvh, \rvh^\star) \ge \eps} L^\star(\rvh) - L^\star(\rvh^\star)\right)$, which is independent of the data size.
\end{proof}

\section{Numerical Experiments}\label{sec:Experiments}

Having established a rigorous theoretical framework for the consistency of hyperparameter estimation in both independent and weakly-dependent systems, we now shift our focus to empirical validation. The convergence rates derived in Theorems \ref{thm:ConsistencyIID} and \ref{thm:ConsistencyWD} provide a formal guarantee, but their practical relevance hinges on whether these effects are observable in realistic simulations.

This section aims to bridge theory and practice by demonstrating our results on two distinct and widely-used models: a Susceptible-Infectious-Susceptible (SIS) model, which is a variant of Susceptible-Infected-Resistant (SIR) model \cite{shaman2012forecasting, shaman2013real, rasmussen2011inference, yang2014comparison}, representing discrete-state dynamics common in epidemiology, and a spiking neuronal network (SNN) model \cite{zhang2024framework, lu2024simulation, lu2024imitating}, representing continuous-state dynamics in neuroscience. For both systems, we will show that the empirical estimation error decreases as the network size grows, providing strong numerical evidence that aligns with our theoretical guarantees.

\subsection{Data Assimilation in SIS Models}\label{sec:SIS}
Susceptible-Infectious-Susceptible model is a classical framework for describing the spread of infectious diseases in which previously infected individuals become susceptible again after recovery. Examples of diseases that fit this pattern include the common cold, seasonal influenza, and COVID-19 \cite{anderson1991infectious, daley1964epidemics, boccaletti2006complex, kermack1927contribution, cao2022mepognn, hethcote2000mathematics, keeling2005networks, he2020seir, basnarkov2021seair, ghostine2021extended, lu2013optimizing, xu2019cybersecurity}. 

In the SIS framework, the population of interest is represented as a network, where each individual is modeled as a node $i$. For a network with $n$ nodes, the connections between individuals are described by an adjacency matrix $\mA$, where $a_{i,j} = 1$ indicates that node $i$ is connected to node $j$, and $a_{i,j}=0$ if there is no connection between node $i$ and node $j$. Let $\gamma_i$ and $\lambda_i$ be the infection rate and recovery rate of the concerning diseases, respectively. Since individuals may have different levels of immunity, these rates are not necessarily identical across nodes. However, once the disease is specified, the rates are assumed to follow certain probability distributions, that is, $\gamma_i \sim p(\gamma \mid \rvh^\star_{\gamma, \text{dyn}})$ and similarly for the recovery rate $\lambda_i \sim p(\lambda \mid \rvh^\star_{\lambda, \text{dyn}})$, where $\rvh^\star_{\gamma, \text{dyn}}$ and $\rvh^\star_{\lambda, \text{dyn}}$ are the corresponding true hyperparameters. Let $\rvxi_i(t)$ be the state of node $i$ at time $t$, which is defined by $\rvxi_i(t) = 0$ representing that the individual is susceptible and $\rvxi_i(t) = 1$ representing that the individual is infectious. For $i = 1, \dots, n$ and  $t = 1, \dots, T$, the SIS dynamical system can then be formulated as:
\begin{equation}
    \begin{matrix}\left\{
        \begin{aligned}
            \rvxi_i(t) &= \left(1-\rvxi_i(t-1)\right)\cdot \mathbb{I}\left[\rvu_i(t) \le I_i(t)\right] + \rvxi_i(t-1) \cdot \mathbb{I}\left[\rvu_i^\prime(t) \le S_i(t) \right], \\
            S_i(t) &= \lambda_i, \\
            I_i(t) &= 1 - \prod\limits_{1 \le k \le n, k \ne i} \left(1- \gamma_i a_{k,i} \rvxi_k(t-1)\right), \\
            \lambda_i &\sim p(\lambda \mid \rvh^\star_{\lambda, \text{dyn}}), \\
            \gamma_i &\sim p(\gamma \mid \rvh^\star_{\gamma, \text{dyn}}),\\
            \rvxi_i(0) &\sim Bernoulli(p^\star), \\
            \rvu_i(t)& ,\rvu_i^\prime(t) \sim U\left([0,1]\right),
        \end{aligned} \right.
    \end{matrix}
\end{equation}
where $\{\rvu_i(t)\}_{1 \le i \le n, 1 \le t \le T}$ are independent and identically uniformly distribution random variables, and $\mathbb{I}$ refers to the indicator function, and $S_i(t)$ and $I_i(t)$ represent the probability that node $i$ transitions from the infectious state to the susceptible state and the probability that node $i$ transitions from the susceptible state to the infectious state at time $t$, respectively.

When a disease begins to spread, the public health organizations typically record the infection status of the population, which serves as the initial condition for the SIS model. Through the model, our goal is to obtain reliable estimates of the information of infection rate and recovery rate for the concerning disease to assess diseases severity and control the spreading. Thus, the hyperparameters $\rvh^\star_{\gamma, \text{dyn}}$ and $\rvh^\star_{\lambda, \text{dyn}}$ are the primary objects of interest, and we denote $\rvh^\star = (\rvh^\star_{\gamma, \text{dyn}}, \rvh^\star_{\lambda, \text{dyn}}) \in \sR_+^2$.

The observation for SIS model is commonly defined as the proportion of infected individuals at time $t$: 
\begin{equation}
    \rvy_t = \frac{1}{n} \sum\limits_{i=1}^n \rvxi_i(t), \text{ for } t= 1,\dots, T.
\end{equation} 
The form of this observation satisfies the observation operator Assumption \ref{assum:obsOperator} of our framework. Therefore, our theoretical results are directly applicable to this estimation problem when combined with an optimization algorithm that minimizes the corresponding objective function.

To demonstrate the theoretical guarantee for hyperparameter estimation problem in the SIS model, we employ the EnKF algorithm as the estimation algorithm. We construct SIS networks of different sizes, with the average in-degree of nodes being $10$. The true infection rates are drawn from an exponential distribution with hyperparameter $\rvh^\star_{\gamma, \text{dyn}} = 0.0015$ and the true recovery rates follow an exponential distribution with hyperparameter $\rvh^\star_{\lambda, \text{dyn}} = 0.0025$. Since the infection rate exhibits more complex dynamics than the recovery rate in the SIS dynamical systems, we focus on estimating the infection rate hyperparameter $\rvh^\star_{\gamma, \text{dyn}}$, and assume the recovery rate hyperparameter is known in the following experiments. Thus, the hyperparameter to be estimated reduces to $\rvh^\star = \rvh^\star_{\gamma, \text{dyn}} \in \sR_+$.

We first examine the identifiability of this estimation problem. After constructing an SIS model with $1000$ nodes, we perturb the infection rate hyperparameter around its true value $\rvh^\star$. For each perturbed hyperparameter, we simulate the SIS dynamics 10 times with different random seeds to obtain statistical information of observations. The deviation from the true observation is quantified using the RRMSE (Relative Root Mean Square Error) for every perturbed model, 
\begin{equation}\label{eq:rrmse}
    \mathrm{RRMSE}(\rmY(\tilde{\rvh})) = \sqrt{\frac{1}{T} \left\|\frac{\rmY(\tilde{\rvh}) - \rmY^\star}{\rmY^\star}\right\|^2},
\end{equation}
where $\rmY^\star$ denotes the observation generated using the true hyperparameter $\rvh^\star$, and $\rmY(\rvh)$ denotes the observation generated using the perturbed hyperparameter $\rvh$. This value indicates the deviation of observations generated with perturbed hyperparameter from the observation generated with the true hyperparameter. The mean and standard deviation of the RRMSE values are shown in Fig. \ref{fig:rmse_observation_deviation_SIS}. The perturbation level is measured by the relative noise amplitude : $ \left|\frac{\tilde{\rvh} - \rvh^\star}{\rvh^\star}\right|$. The results indicate that the observations with the perturbed hyperparameter $\tilde{\rvh}$ coincide with the true observation only when $\tilde{\rvh} = \rvh^\star$, which confirms that the hyperparameter estimation problem for the SIS model is identifiable.

\begin{figure}[!t]
    \centering
    \includegraphics[width=0.45\textwidth]{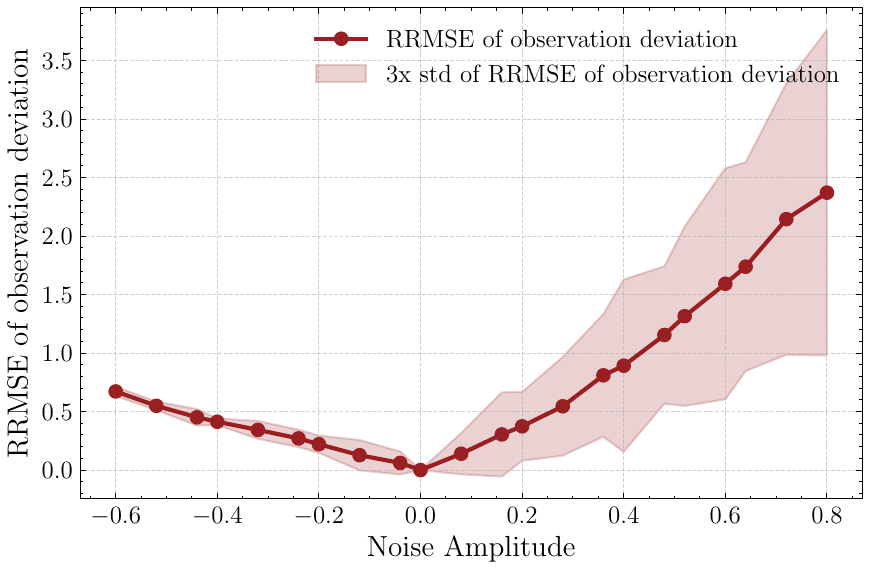}
    \caption{The RRMSE of the observation $\rmY(\tilde{\rvh})$ with perturbed hyperparameter $\tilde{\rvh}$ and the true observation $\rmY^\star$ with the preset true hyperparameter $\rvh^\star$ for different perturbation noise amplitude, for SIS model experiments.}
    \label{fig:rmse_observation_deviation_SIS}
\end{figure}

We constructed SIS models with 21 different network population sizes: 400, 600, 800, 1000, 1200, 1400, 1600, 1800, 2000, 2200, 2400, 2600, 2800, 3000, 3200, 3400, 5000, 6000, 7000, 8000, and 9000. The sampling of sizes is denser at the lower end of this range to closely observe the initial, rapid phase of convergence predicted by our theory. For larger networks, where the estimator saturates, a sparser sampling is sufficient.

\begin{figure}[H]
    \centering
    \includegraphics[width=0.45\textwidth]{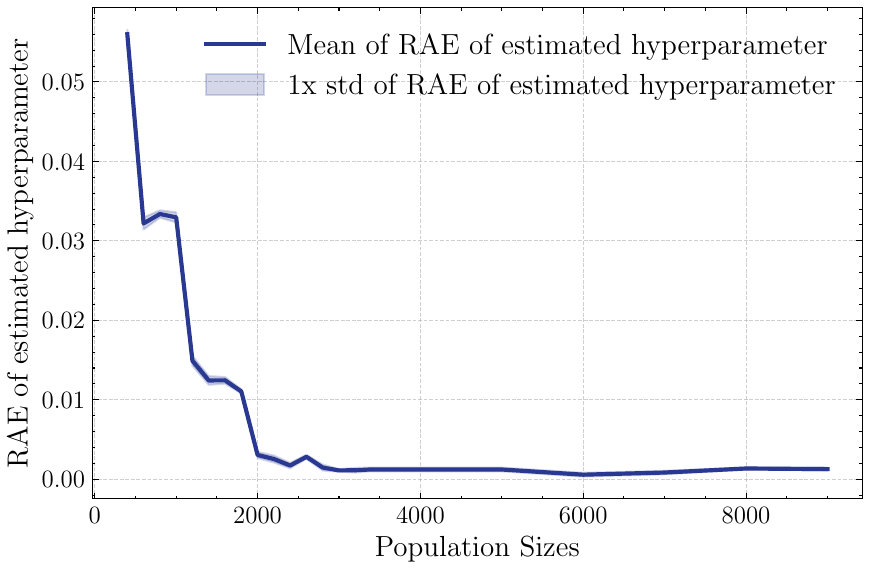}
    \caption{Hyperparameter estimation results for different network population sizes for SIS model experiments. The RAE is computed for the last 100 steps of the estimation process representing the final converged performance.}
    \label{fig:hp_estimation_size_SIS}
\end{figure}

Then, based on the observation sequence, we estimate the hyperparameter $\rvh^\star$ at each time $t$ using the EnKF \cite{evensen2002sequential, law2015data} algorithm with the generated observation $\rvy_t(\hat{\rvh}_n(t))$ with the estimation $\hat{\rvh}_n(t)$. This yields a sequence of hyperparameter estimates $\left\{\hat{\rvh}_n(1), \hat{\rvh}_n(2), \dots, \hat{\rvh}_n(t)\right\}$. To evaluate the performance of the estimation algorithm, the estimated hyperparameter $\hat{\rvh}_n(t)$ is compared with the preset hyperparameter $\rvh^\star$ with the RAE (Relative Absolute Error): 
\begin{equation}\label{eq:rae_curve}
    \mathrm{RAE}(\hat{\rvh}_n(t)) = \left|\frac{\hat{\rvh}_n(t) - \rvh^\star}{\rvh^\star}\right|.
\end{equation}

Each data assimilation task is run for 1500 time steps, with each step corresponding to a new observation. The RAE curves of the estimated hyperparameters for different network population sizes are shown in Fig. \ref{fig:hp_estimation_traces_SIS}. The results indicate that the estimates tend to converge after approximately 1200 steps. Moreover, the final RAE values decrease as the network population size increases, thereby validating our theoretical framework in a practical setting.

To provide a more intuitive measure of convergence results, we compute the RAE of the mean over the last 100 time steps for the estimated hyperparameter, which reflects the convergence performance of the estimation:
\begin{equation}\label{eq:rae_final}
    \mathrm{RAE}(\bar{\rvh}) = \left|\frac{\bar{\rvh} - \rvh^\star}{\rvh^\star}\right| \text{ with } \bar{\rvh} = \frac{1}{100} \sum\limits_{t=T-100}^{T} \hat{\rvh}_n(t).
\end{equation}
We show these RAE values for SIS models with different network population sizes in Fig. \ref{fig:hp_estimation_size_SIS}. Our theoretical results guarantee that the estimated hyperparameter $\hat{\rvh}$ will converge to the true hyperparameter $\rvh^\star$ with probability increasing to 1 as the network population size increasing. In Fig. \ref{fig:hp_estimation_size_SNN}, the RAE decreases with the network population size increasing, which coincides with our theoretical result.

\begin{figure}[H]
    \centering
    \subfloat[\label{fig:hp_estimation_traces_SIS:a1}]{\includegraphics[width=0.22\textwidth]{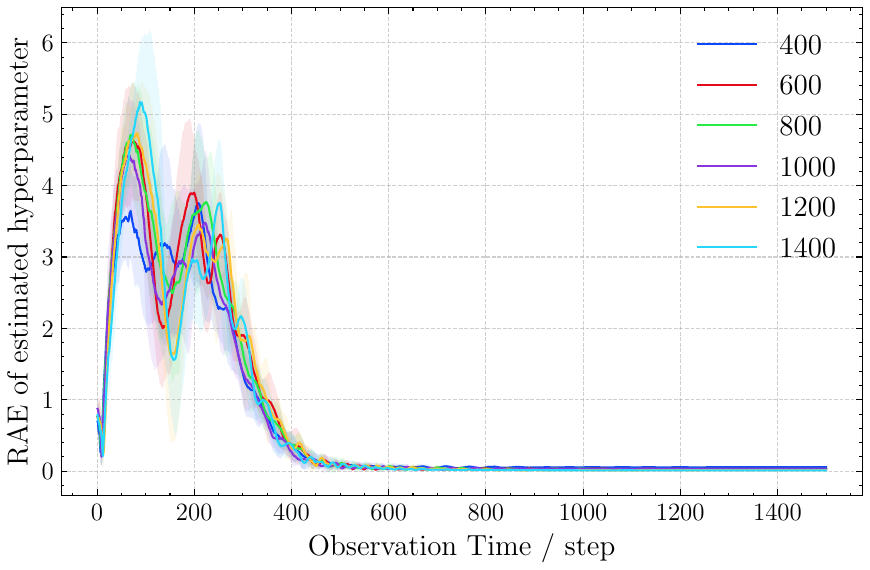}}
    \subfloat[\label{fig:hp_estimation_traces_SIS:b1}]{\includegraphics[width=0.22\textwidth]{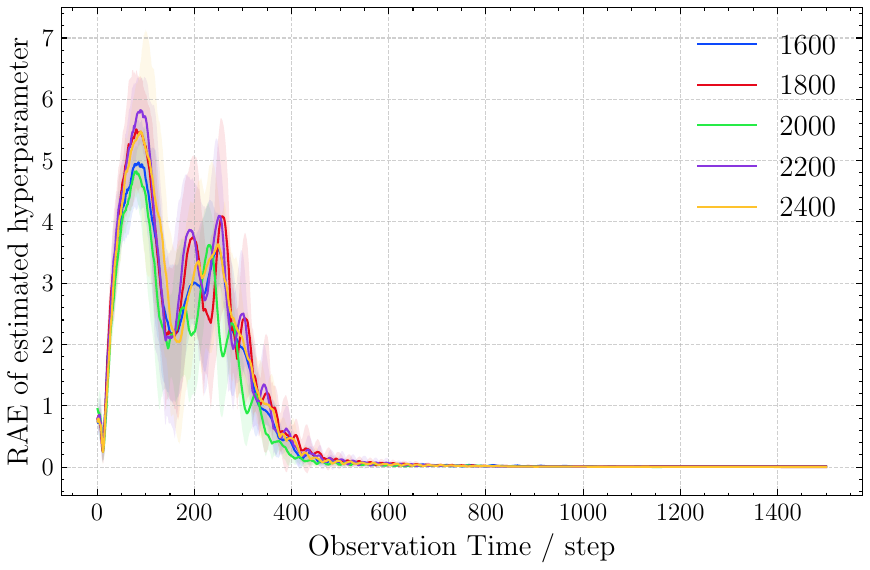}}
    \subfloat[\label{fig:hp_estimation_traces_SIS:c1}]{\includegraphics[width=0.22\textwidth]{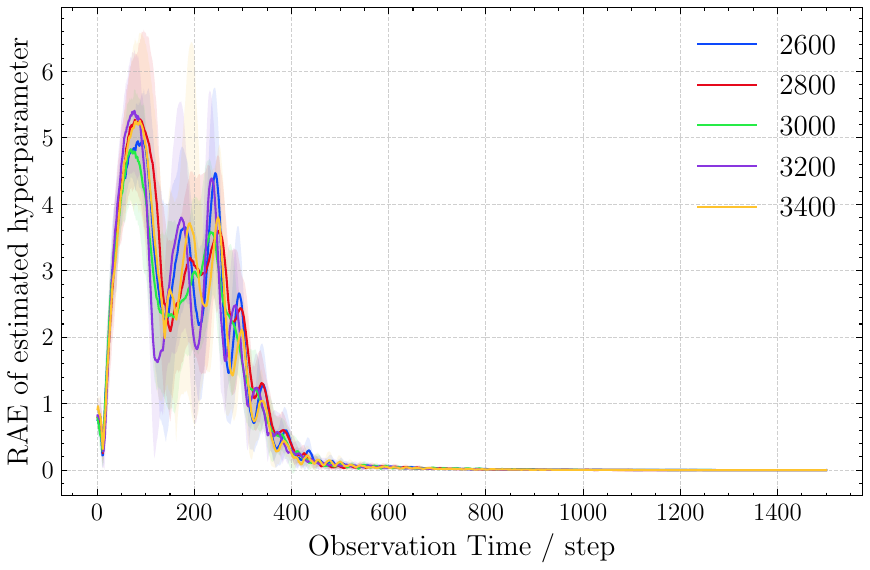}}
    \subfloat[\label{fig:hp_estimation_traces_SIS:d1}]{\includegraphics[width=0.22\textwidth]{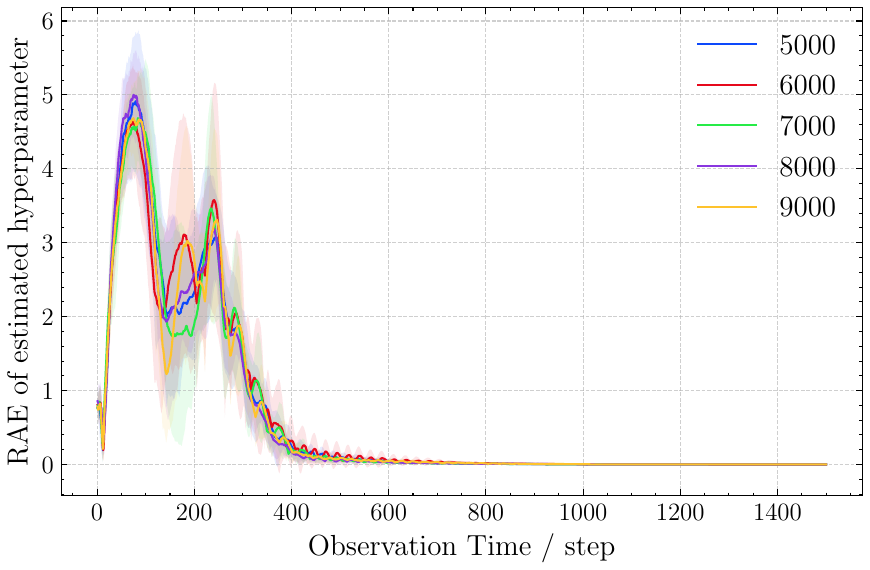}}\\
    \subfloat[\label{fig:hp_estimation_traces_SIS:a2}]{\includegraphics[width=0.22\textwidth]{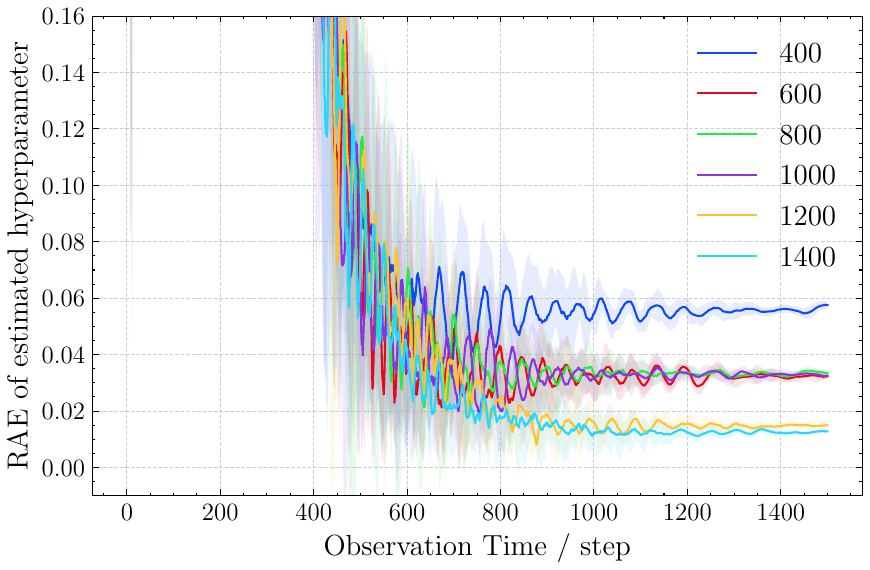}}
    \subfloat[\label{fig:hp_estimation_traces_SIS:b2}]{\includegraphics[width=0.22\textwidth]{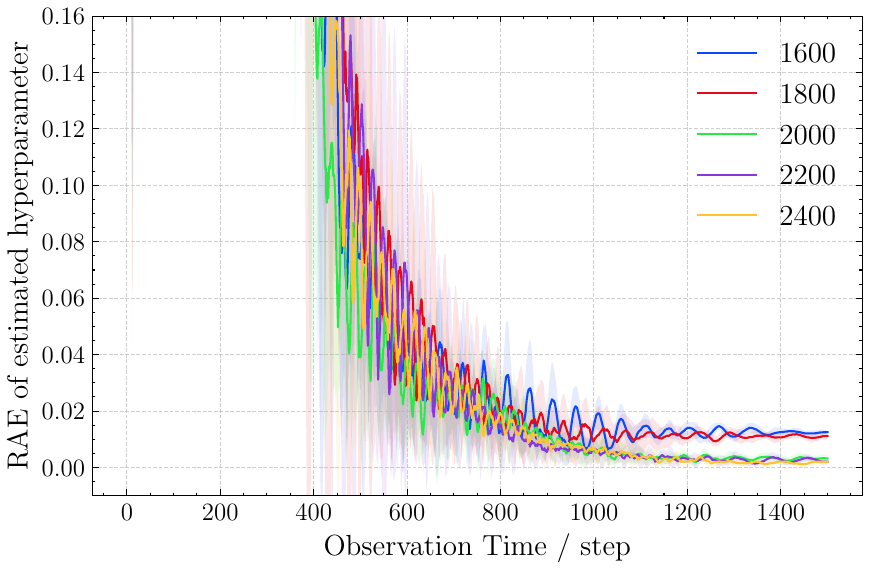}}
    \subfloat[\label{fig:hp_estimation_traces_SIS:c2}]{\includegraphics[width=0.22\textwidth]{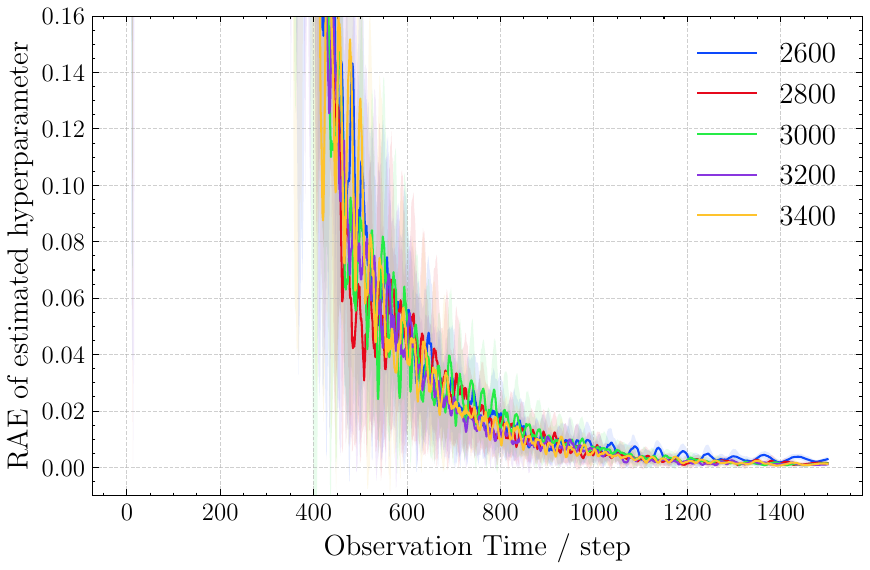}}
    \subfloat[\label{fig:hp_estimation_traces_SIS:d2}]{\includegraphics[width=0.22\textwidth]{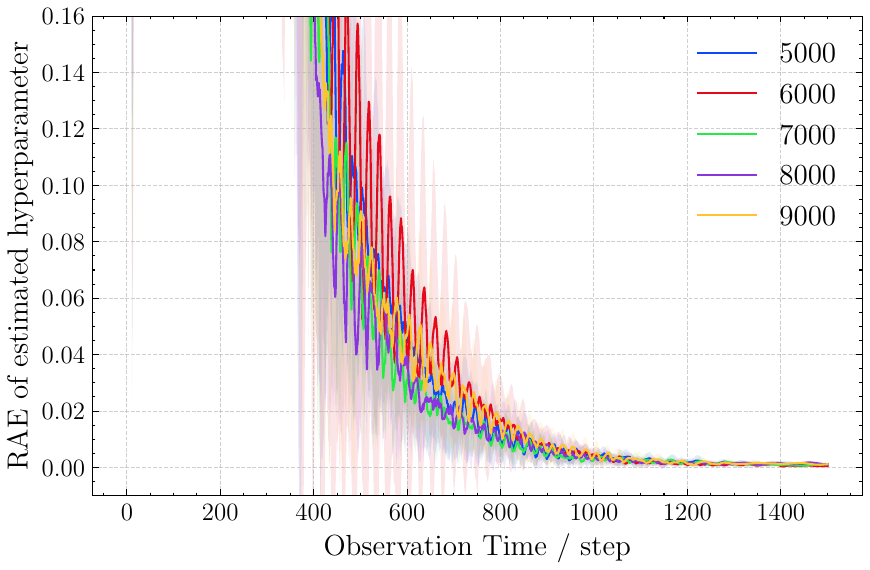}}
    \caption{The RAE curves of hyperparameter estimation process for different neural network sizes for SIS model experiments. The top 4 figures (a, b, c, d) show the whole RAE curves of estimation process for 4 groups of experiments with different network population sizes of ([400, 600, 800, 1000, 1200, 1400], [1600, 1800, 2000, 2200, 2400], [2600, 2800, 3000, 3200, 3400], [5000, 6000, 7000, 8000, 9000]), respectively. The bottom 4 figures (e, f, g, h) are the zoom-in illustration of the top 4 figures (a, b, c, d), respectively.}

    \label{fig:hp_estimation_traces_SIS}
\end{figure}

\subsection{Data Assimilation in Spike Neuronal Networks}\label{sec:SNN}

In brain neuronal networks, the synaptic conductances of neurons can be modeled as random variables governed by hyperparameters. The leaky integrate-and-fire (LIF) \cite{burkitt2006review, stein1965theoretical} model serves as the canonical framework for describing the spiking neuronal network model and is widely used to capture the dynamical behavior of large-scale neuronal networks. A variety of neuroimaging signals can be employed as observations for hyperparameter estimation, including BOLD \cite{deco2012ongoing, logothetis2004interpreting}, fMRI \cite{deco2012ongoing, logothetis2008we, heeger2002does}, MEG \cite{david2004evaluation, da2013eeg}, EEG \cite{buzsaki2012origin} and LFP \cite{peyrache2012spatiotemporal, buzsaki2012origin, linden2011modeling} and so on.

The Digital Twin Brain (DTB) model \cite{lu2024imitating, lu2024simulation} is a computational framework that emulates brain dynamics using spiking neuronal network composed of LIF neurons, which is governed by the following dynamics:
\begin{equation}
    \left\{\begin{matrix}
        \begin{aligned}
        C_i \frac{\df V_i(t)}{\df t} =& -g_L (V_i(t) - V_L) + I_{syn,i}(t) + I_{bg,i}(t) + I_{ext,i}(t), \quad V_i(t) < V_{th}, \\
        I_{syn,i}(t) =& \sum_{u} I_{u,i}(t) = \sum_{u} (g_{u,i}+g_{u,i}^{\mathrm{sin}}(t)) (V_u - V_i(t))J_{u,i}(t), \\
        g_{u,i} \sim& p(g \mid \rvh_{\mathrm{dyn}}^\star), \\
        g_{u,i}^{\mathrm{sin}}(t) =& g_0 \sin(\pi \omega t), \\
        \tau_{bg} \df I_{bg,i}(t) =& (\mu_{bg} -I_{bg,i}(t)) \df t + \sqrt{2\tau_{bg}} \sigma_{bg} \df W(t), \\
        \frac{\df J_{u,i}(t)}{\df t} =& -\frac{J_{u,i}(t)}{\tau_{i}^{u}} + \sum_{k,j} w_{ij}^{u} \delta(t - t_{k}^{j}), \\
        V_i(t) =& V_{rest}, \quad t\in (t_{k}^{i}, t_{k}^{i}+T_{ref}].  
        \end{aligned}
    \end{matrix}\right.
\end{equation}
In the neuronal network dynamical system described above, $C_i$ represents the neuron membrane capacitance of node $i$, $g_L$ denotes the leakage conductance, $V_L(t)$ is the leakage voltage, and $I_{syn,i}(t)$ refers to the total synaptic current of neuron $i$ at time $t$, defined as the sum of currents from four synapse types. Specifically, $I_{u,i}(t)$ represents the synaptic current for synapse type $u$ of neuron $i$. Here we considered four synapse types: AMPA, NMDA, $\mathrm{GABA_{A}}$ and $\mathrm{{GABA}_B}$. $g_{u,i}$ denotes the conductance of synapse type $u$ for neuron $i$. $V_u$ is the reversal potential of synapse type $u$. The parameters $\mu_{bg}$ and $\sigma_{bg}$ represents the mean and standard deviation of the background current, respectively, and $\tau_{bg}$ denotes the time-scale constant of the background current while $W(t)$ is the standard Wiener process. And $\tau_{i}^{u}$ is the time-scale constant of synapse type $u$ for neuron $i$. $w_{ij}^{u}$ is the connection weights from neuron $j$ to neuron $i$ for synapse type $u$, which are drawn from the uniform distribution $\mathrm{Uniform}(0,1)$. $\delta(\cdot)$ is the Dirac-delta function, and $t_{k}^{j}$ is the spike time of neuron $j$ at the $k$-th spike. The background current $I_{bg,i}(t)$ is modeled as an independent Ornstein-Uhlenbeck (OU) process, which is defined as
\begin{equation}\label{eq:OUcurrent}
    \tau_{b g} d I_{b g, i}=\left(\mu_{b g}-I_{b g, i}\right) \mathrm{dt}+\sqrt{2 \tau_{b g}} \sigma_{b g} d W_{t}.
\end{equation}
The external stimulus $I_{ext,i}(t)$ is applied exclusively to excitatory neurons. When the membrane potential $V_i$ reaches the threshold $V_{th}$, a spike is generated for neuron $i$, and the membrane potential is reset to the resting potential $V_{rest}$ for a refractory period $T_{ref}$. The synaptic current is then updated based on the spikes of other neurons. The dynamics of the neuron model continue to evolve based on these updates, allowing for the simulation of complex neural interactions. 

The synaptic conductance $g_{u,i}$ governs the strength of current transmission between internal neurons, which reflects the structural and functional properties of a certain brain region. Therefore, our objective is to estimate this conductance. Within the hierarchical Bayesian model in this paper, we model the conductance from NMDA channel, $g_{\mathrm{NMDA},i}$, as random variables following a Gamma distribution. For notational simplicity, here we still denote the conductance $\{g_{\mathrm{NMDA},i}\}$ as $\{g_{u,i}\}$ when no ambiguity arises:
\begin{equation}
    g_{u,i} \sim \mathrm{Gamma}(\alpha, \beta), \text{ with } \beta = \alpha / \rvh_{\mathrm{dyn}},
\end{equation}
where we set $\alpha = 5$ based on empirical considerations. Then, we can estimate the conductance $g_{u,i}$ through the hyperparameter $\rvh_{\mathrm{dyn}}^\star$ by observations. Here, we choose the LFP (Local Field Potential) as the observation, which is defined as the average membrane voltage across all neurons, that is
\begin{equation}
    \rvy(t) = \frac{1}{n} \sum\limits_{i=1}^n V_i(t), \text{ for } t= 1,\dots, T.
\end{equation} 

In our network setup, the excitatory and inhibitory neurons are in a ratio of $4:1$. Each neuron has connections of in-degree $D = 10$, with excitatory and inhibitory connections also maintaining a ratio of $4:1$. 

Based on the constructed spike neural network, we preset the hyperparameter to be estimated as $\rvh^\star = \rvh_{\mathrm{dyn}}^\star = 4.86 \times 10^{-6}$. The conductance from NMDA channel, $g_{u,i}$, are drawn from a Gamma distribution with shape parameter $\alpha = 5$ and inverse scale parameter $\beta = 5/ \rvh^\star$.

And the settings for other arguments in the neuron model are shown in Table. \ref{tab:default_parameters}.
\begin{table}[!ht]
  \centering
  \caption{{\bf Default parameters in the SNN model}}
  \label{tab:default_parameters}
  \begin{tabular}{ccccccccccc}
    \toprule
    {\bf Symbol} & {\bf Value}   \\
    \toprule
    $C_{i}$ & $1 \mathrm{\mu f} $  \\
    \midrule
    $g_{L}$ & $0.03 \mathrm{mS}$ \\
    \midrule
    $V_{L}$ & $-75 \mathrm{mV}$ \\
    \midrule
    $V_{th}$ & $-50 \mathrm{mV}$ \\
    \midrule
    $V_{rest}$ & $-65 \mathrm{mV}$ \\
    \midrule
    $\left [ V_{\mathrm{AMPA}, i}, V_{\mathrm{NMDA},i}, V_{\mathrm{GABA_A},i}, V_{\mathrm{GABA_B},i} \right ]$ & $\left [ 0, 0, -70, -100 \right ] \mathrm{mV}$ \\
    \midrule
    $\left [ g_{\mathrm{AMPA}, i}, g_{\mathrm{GABA_{A}},i}, g_{\mathrm{GABA_{B}},i} \right ]$ & $\left [ 1.6 \times 10^{-5}, 3.672 \times 10^{-5}, 7.56 \times 10^{-6} \right ]$ \\
    \midrule
    $\left [ \tau_{\mathrm{AMPA}, i}, \tau_{\mathrm{NMDA},i}, \tau_{\mathrm{GABA_A},i}, \tau_{\mathrm{GABA_B},i} \right ]$ & $\left [ 2, 40, 10, 50 \right ]$ \\
    \midrule
    $w_{i,j}^{u}$ &  $\sim \mathrm{Uniform}(0,1)$ \\
    \midrule
    $g_0$ & $4.86 \times 10^{-6}$ \\
    \midrule 
    $\omega$ & $0.02$ \\
    \midrule
    $T_{ref}$ & $5 \mathrm{ms}$ \\
    \midrule
    $\mu_{bg}$ & $0.71 \mathrm{nA}$ \\
    \midrule
    $\sigma_{bg}$ & $0.05 \mathrm{nA}$ \\
    \midrule
    $\tau_{bg}$ & $10 \mathrm{ms}$ \\
    \midrule
    $I_{ext,i}$ & $0 \mathrm{mA}$ \\
    \bottomrule
  \end{tabular}
\end{table}

To evaluate how our theoretical results support assimilation tasks in spiking neural networks, we follow a procedure analogous to that described in section \ref{sec:SIS}. 

Specifically, we first verify that the observation used for assimilation is injective with respect to the hyperparameter, thereby ensuring that the deterministic objective function is identifiable. To this end, we constructed a spiking neuron network with 1000 nodes, and perturbed the preset hyperparameter $\rvh^\star$ by adding Gaussian noise with different noise amplitude, and then sample the conductances according to the distribution determined by each perturbed hyperparameter. For each perturbation level, we repeatedly generating the neural network 100 times with different random seeds to reduce stochastic effects and obtain the corresponding observation sequence under various hyperparameters. Then, we compute the RRMSE, as defined in (\ref{eq:rrmse}), between the observation sequence under perturbed hyperparameter and the observation sequence under preset true hyperparameter with respect to duration $T$, which quantifies the deviation between the observation sequence under perturbed hyperparameter and the observation sequence under preset true hyperparameter. The results are shown in Fig. \ref{fig:rmse_observation_deviation_SNN}, which indicates that the deviation is nonzero whenever the perturbed hyperparameter differs from the true hyperparameter. This demonstrates that the observation sequence is injective with respect to the hyperparameter, thereby confirming that the estimation problem is identifiable.

\begin{figure}[!t]
    \centering
    \includegraphics[width=0.5\textwidth]{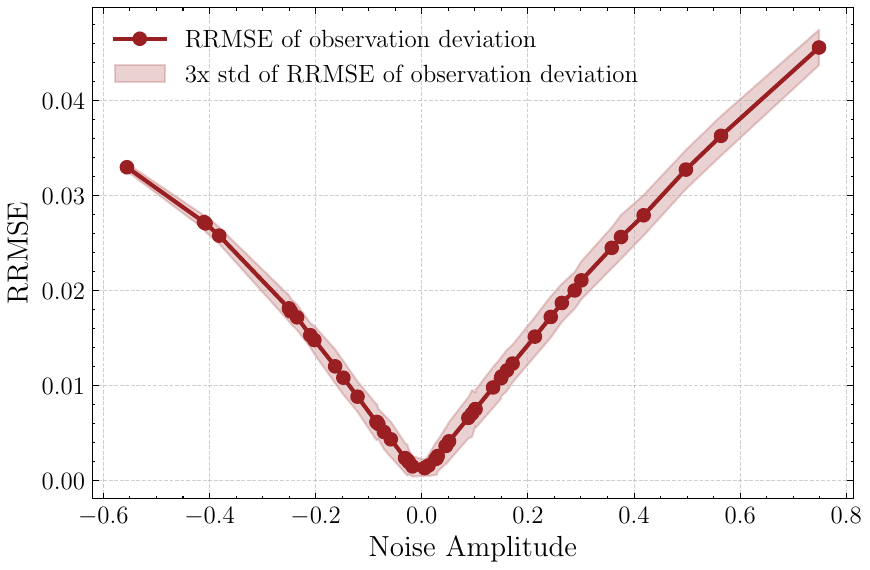}
    \caption{The RRMSE of the observation $\rmY(\tilde{\rvh})$ with perturbed hyperparameter $\tilde{\rvh}$ and the true observation $\rmY^\star$ with the preset true hyperparameter $\rvh^\star$ for different perturbation noise amplitude, for SNN model experiments.}
    \label{fig:rmse_observation_deviation_SNN}
\end{figure}

Analogous to the analysis in Section \ref{sec:SIS}, we constructed SNN models with 14 different network population sizes: 400, 600, 800, 1000, 1200, 1600, 2000, 2400, 2800, 3200, 4000, 5000, 8000 and 10000. Again, the sampling of sizes is denser for smaller networks to closely observe the initial, rapid phase of convergence, and sparser for larger networks where the estimator saturates.

Under the preset true hyperparameter $\rvh^\star$, we simulate the spiking neural network and record the LFP at a temporal resolution of $1$ ms. This yields the observation sequence $\rmY^\star = \left\{\rvy_1^\star, \dots, \rvy_T^\star\right\}$ over a duration of  $T=4000$ ms. Following the procedure outlined in section \ref{sec:SIS}, we estimate the hyperparameter $\rvh^\star$ sequentially at each time step $t$ using the EnKF algorithm. This yields a sequence of hyperparameter estimates $\left\{\hat{\rvh}_n(1), \hat{\rvh}_n(2), \dots, \hat{\rvh}_n(t)\right\}$. We evaluate the accuracy of the estimates by computing the RAE, as defined in (\ref{eq:rae_curve}). To reduce the stochastic effects, we repeat each assimilation task 100 times with different random seeds. The results are shown in Fig. \ref{fig:hp_estimation_traces_SNN}. The results indicate that the estimates converge after approximately 2500 steps and that the final RAE decreases as the number of neurons increases, thereby validating our theoretical predictions in a practical setting.

\begin{figure}[!t]
    \centering
    \subfloat[\label{fig:hp_estimation_traces_SNN:a1}]{\includegraphics[width=0.3\textwidth]{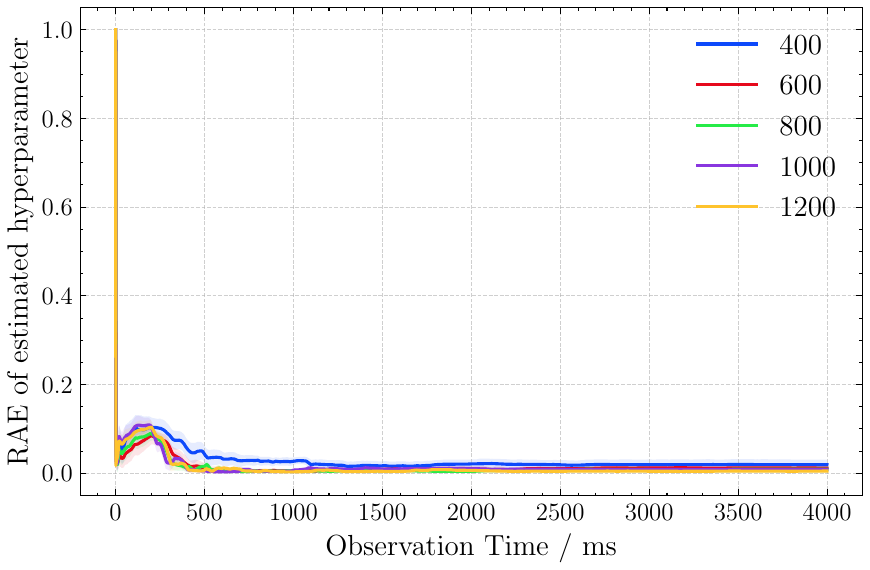}}
    \subfloat[\label{fig:hp_estimation_traces_SNN:b1}]{\includegraphics[width=0.3\textwidth]{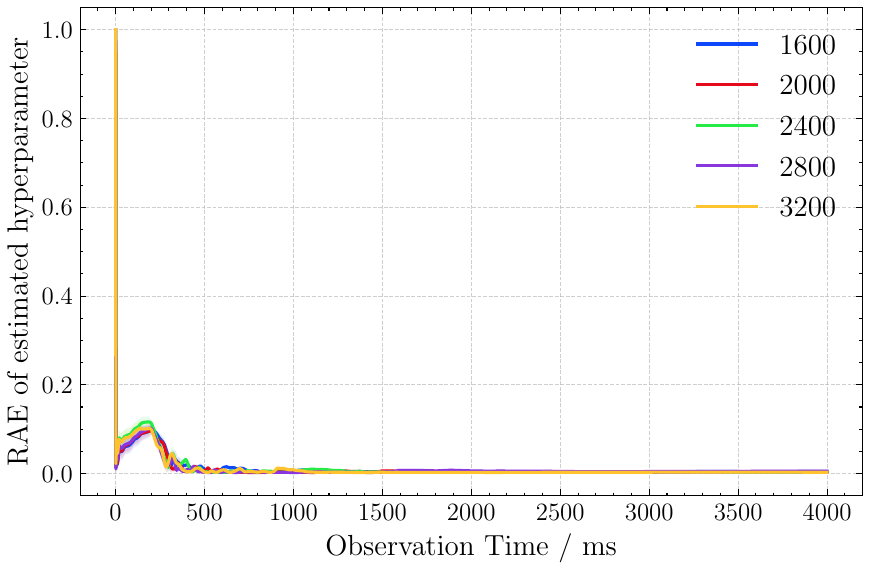}}
    \subfloat[\label{fig:hp_estimation_traces_SNN:c1}]{\includegraphics[width=0.3\textwidth]{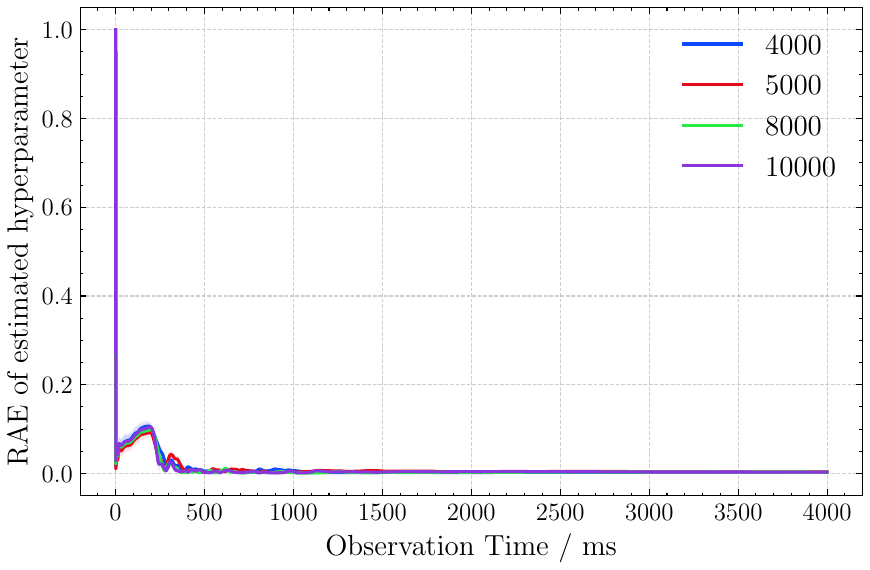}}\\
    \subfloat[\label{fig:hp_estimation_traces_SNN:a2}]{\includegraphics[width=0.3\textwidth]{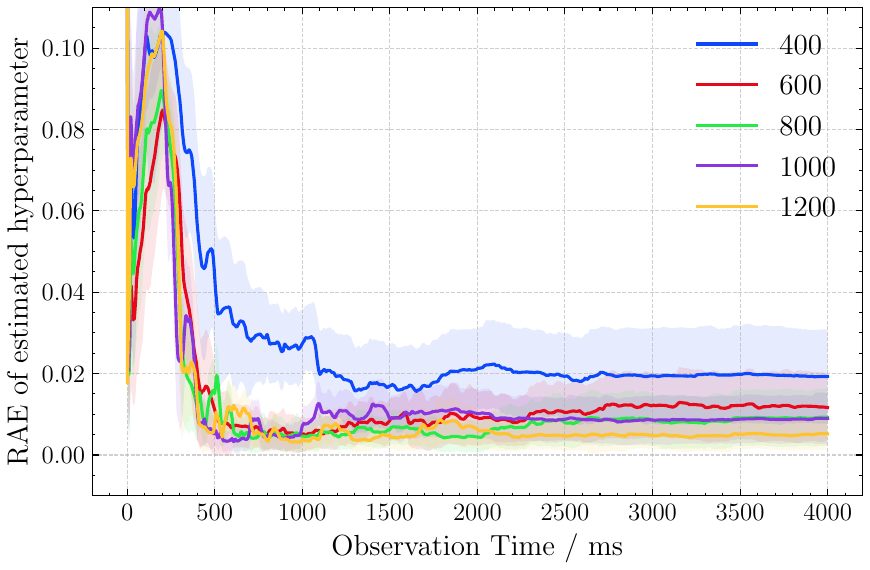}}
    \subfloat[\label{fig:hp_estimation_traces_SNN:b2}]{\includegraphics[width=0.3\textwidth]{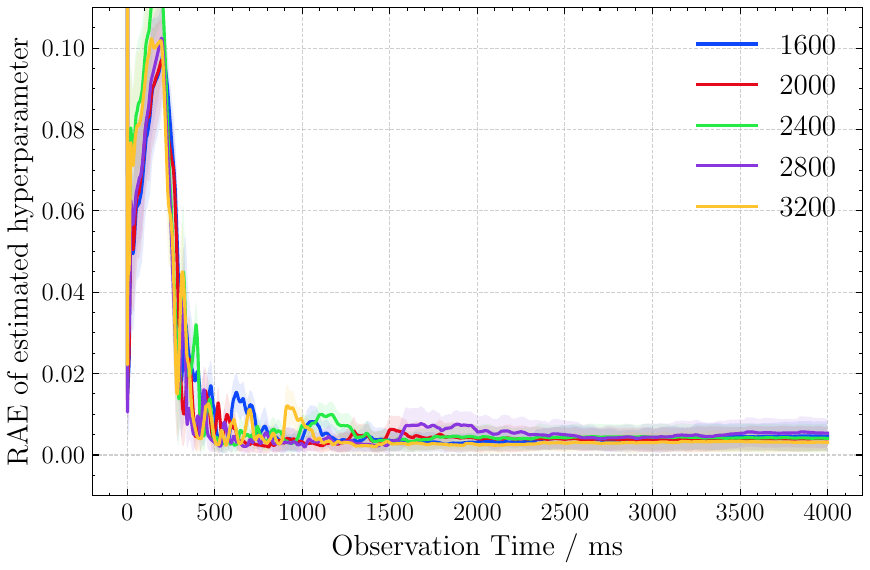}}
    \subfloat[\label{fig:hp_estimation_traces_SNN:c2}]{\includegraphics[width=0.3\textwidth]{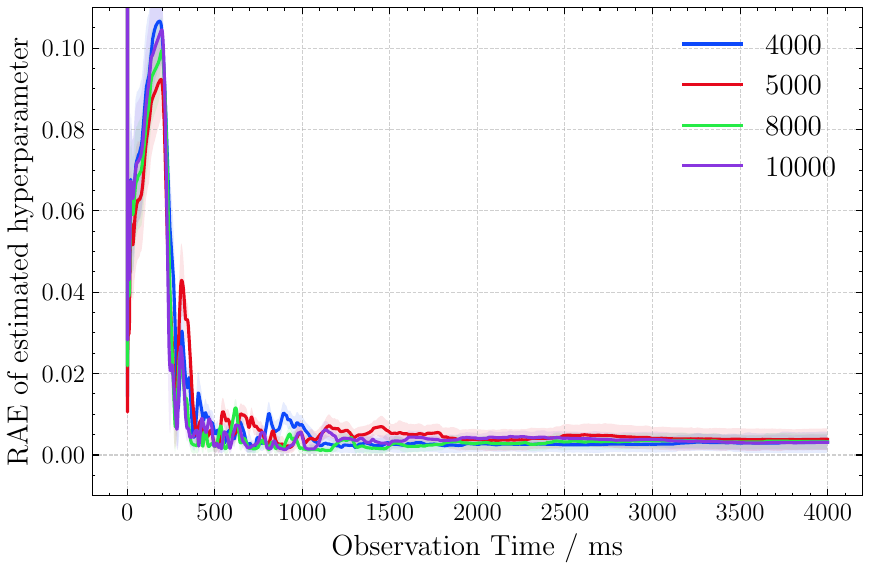}}
    \caption{The RAE curves of hyperparameter estimation process for different neural network sizes for SNN experiments. The top 3 figures (a, b, c) show the whole RAE curves of estimation process for 3 groups of experiments with different network population sizes of ([400, 600, 800, 1000, 1200], [1600, 2000, 2400, 2800, 3200], [4000, 5000, 8000, 10000]), respectively. The bottom 3 figures (d, e, f) are the zoom-in illustration of the top 3 figures (a, b, c), respectively.}

    \label{fig:hp_estimation_traces_SNN}
\end{figure}

To provide a more interpretable measure of convergence, we compute the RAE of the mean estimate over the final 100 steps, as defined in (\ref{eq:rae_final}). We show these RAE values of SNN models with different network population sizes in Fig. \ref{fig:hp_estimation_size_SNN}. Our theoretical results state that the estimated hyperparameter $\hat{\rvh}$ will converge to the true hyperparameter $\rvh^\star$ with probability increasing to 1 as the network population size increasing. In Fig. \ref{fig:hp_estimation_size_SNN}, the RAE decreases with the network population size increasing, which supports that our theoretical results hold for assimilation tasks in spiking neural networks.

\begin{figure}[!t]
    \centering
    \includegraphics[width=0.5\textwidth]{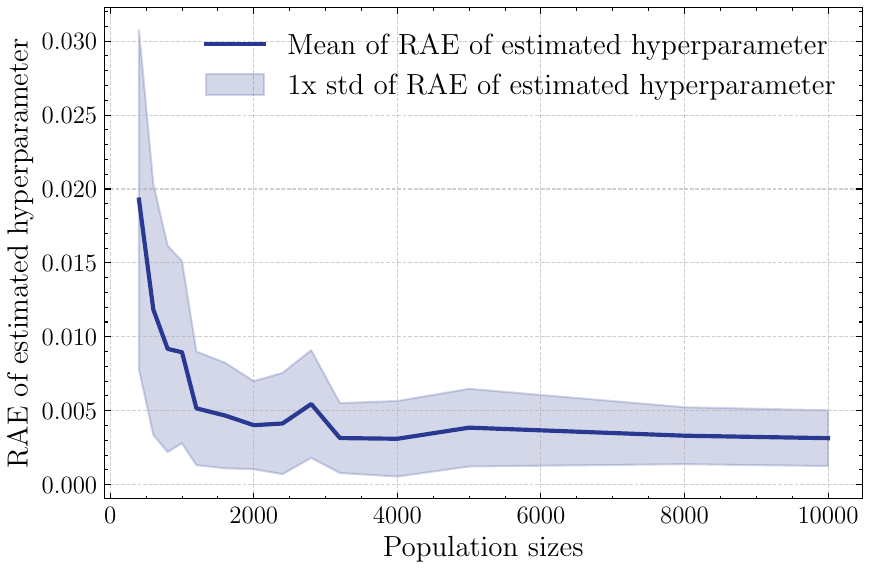}
    \caption{Hyperparameter estimation results for different network population sizes for SNN model experiments. The RAE is computed for the last 100 steps of the estimation process representing the final converged performance.}

    \label{fig:hp_estimation_size_SNN}
\end{figure}

\section{Conclusion}\label{sec:conclusion}
In this work, we have addressed a fundamental challenge in the statistical modeling of large-scale complex network dynamical systems: the reliability of parameter estimation in the face of high dimensionality and data scarcity. While the hierarchical Bayesian framework offers a powerful approach for modeling inhomogeneous systems by estimating hyperparameters that govern parameter distributions, its theoretical validity for networks of increasing size has remained an open question.

Our central contribution is the development of a rigorous theoretical framework that establishes the consistency of hyperparameter estimation with respect to the growing size of the network population. We have formally proven that as the population size increases, the estimates of hyperparameters, which govern both the system's dynamics and its initial conditions, converge to their true values, even with a limited duration of observational data. By leveraging a measure transport perspective and focusing on common mean-type observations, we established this guarantee for a general class of optimization-based estimators.

Crucially, this consistency result is not confined to simplified systems with independent nodes; we have extended it to encompass systems with weak dependencies, which more accurately reflect the interconnected nature of most real-world phenomena. Our theoretical claims were substantiated through numerical experiments on representative models from epidemiology (the SIS model) and neuroscience (a spiking neuronal network model). In both cases, the empirical estimation error was shown to decrease as the network size increased, precisely as our theory predicts.

This research fills a significant void in the statistical theory of complex network dynamical systems. It provides the necessary justification for applying hierarchical inference and data assimilation techniques to large-scale, inhomogeneous problems, ensuring that the resulting models are not only computationally tractable but also statistically robust. The findings offer confidence that as we build larger and more complex models of phenomena like brain activity or global pandemics, our inference methods will remain reliable and will not be compromised by the scale of the system itself. By providing this theoretical guarantee, our work paves the way for more ambitious and realistic modeling of complex systems across science and engineering.

\section*{Funding}
    The authors were jointly supported by the STCSM (No. 23JC1400800), the JiHua Laboratory S\&T Program (No. X250881UG250), the Lingang Laboratory, Grant (No. LGL-1987), and the key projects of "Double First-Class" initiative of Fudan University.

\begin{appendix}\label{sec:appendix}
\subsection*{For Least Squares case}
Although the ANN can be viewed as a dynamical system with respect to the layer forward process, in practice, most training methods of ANN only concern the last layer output, therefore no time perspective observations are available in this dynamical system construction. However, the training process of ANN can be modeled as a single node system with the whole forward process transformation as the state operator, and each time the node resamples an input sample and perform this transformation on it, and the observation is state transformed from the input sample. In the case of training process of neural networks, the dynamical system estimation task via observations degenerates to classical Least Squares Estimation (LSE) or Maximum Likelihood Estimation (MLE). 

As described in the section \ref{sec:problem_formulation}, the least squares estimation problem can be formulated as a single node network with a \textit{resample and transform} dynamics (\ref{eq:dynSysLS}). Thus, we can prove the consistency of least squares estimation within our framework by considering the sample size in general least squares estimation researches as the observation duration $T$ in our dynamical system view. We formulate and proof the consistency of least squares estimation problem under i.i.d. condition as an demonstration in the following section. The Least Square Estimation problem under weakly-dependent condition can be similarly derived along the proof routine in the main text, thus we omit the part for simplicity. To integrate the Least Square estimation problem to our framework \ref{defn:ProbGen}, notice that the hyperparameter to be estimated is $\rvh^\star = \rvh_{\mathrm{dyn}}^\star \in \gH$ while the probabilistic hyperparameter space $\gH_{\mathrm{prob}}$ is null space. The whole observation till time $t$ is $\rmY^\star = (\rvy_1^\star, \ldots, \rvy_T^\star)$, where $\rvy_t^\star = \rvs(t) = f(\rvx(t); \rvh^\star)$. Thus, the Assumption \ref{assum:obsOperator} is naturally satisfied. Therefore, we can formulate the least squares estimation within the framework \ref{defn:ProbGen}. We state the formulation for least squares estimation problem under i.i.d. condition as following:

\begin{problem}[\textbf{Least Square Estimation Problem under Independent and identically Distributed Condition}]\label{defn:ProbLS}
    Consider the \textit{resample and transform} dynamics (\ref{eq:dynSysLS}), the hyperparameter space is simply $\gH = \gH_{\mathrm{dyn}}$ while the probabilistic hyperparameter space in \ref{defn:ProbGen} is null space. The observation is defined by 
    \begin{equation}
        \rvy_t(\rvh) = \rvs(t) \triangleq f(\rvx(t), \rvh), \quad t = 1,2,\dots, T,
    \end{equation}
    and the true observation is generated with the true hyperparameter $\rvh^\star$ with noise, that is,
    \begin{equation}
        \rvy_t^\star = \rvs^\star(t) \triangleq f(\rvx(t), \rvh^\star) + \omega_{(t)}, \quad t = 1,2,\dots, T,
    \end{equation}
    where $f:\gX \times \gH \to \gY \subset \R^r$ is an transformation operator, $\rvx(t) \in \gX \subset \R^k$ are i.i.d. random vectors, the parameter $\rvh^\star \in \gH \subset \R^h$ and the noise $\omega_T = (\omega_{(1)}, \omega_{(2)}, \dots, \omega_{(T)})$, where $\{\omega_{(t)}\}_{t=1,\dots,T}$ are i.i.d. Gaussian random vector with zero mean and variance of $\sigma^2 \mI_r$. Since there is only one single node in this system, the observation operator naturally satisfies the Assumption \ref{assum:obsOperator}.

    We denote $\rmX_T = (\rvx(1), \dots, \rvx(T))$ and the loss function as $\|\rvy_t(\rvh) - \rvy_t^\star\|_2^2 = \|f(\rvx(t), \rvh) - f(\rvx(t), \rvh^\star) - \omega_{(t)}\|_2^2 \triangleq l(\rvx(t), \rvh)$.

    Then, the empirical objective function $L_1$ merging the observation thus takes the form of 
    \begin{equation}\label{eq:LSempiricalObjective}
        L_1(\rvh) = \frac{1}{T} \sum\limits_{t=1}^T l(\rvx(t),\rvh).
    \end{equation}
    
    And the deterministic objective function is the expectation of the empirical objective function, that is
    \begin{equation}
        L_1^\star(\rvh) = \E [L_1(\rvh)] = \E [l\left(\rvx_1, \rvh\right)].
    \end{equation}
\end{problem}

The empirical objective function $L_1(\rvh)$ and the estimator $\hat{\rvh}_n(1)$ have subscript of $1$ since the system contains only single node, and we will omit this subscript $1$ in the following section, just refer to $L(\rvh)$ and $\hat{\rvh}$ for simplicity.

\begin{remark}
    In the problem formulation \ref{defn:ProbLS}, when the operator $f$ is nonlinear, the problem is a Nonlinear Least Square (NLS) estimation problem. When the operator $f$ is linear and the dimension of hyperparameter space $\gH$ is the same as the dimension of the sample space $\gX$, that is, $h = k$, then $f(\rvx(t), \rvh) = \rvx(t)^\intercal \rvh$ and the problem is a Linear Least Square (LLS) estimation problem. 
\end{remark}

Firstly, we state the symmetry technique for \ref{defn:ProbLS}.

\begin{lemma}[Rademacher Symmetry]\label{le:Symmetry}
    Let $\rmX_T = (\rvx(1), \dots, \rvx(T))$, where ${\rvx(t)}_{t = 1,\dots, T}$ are i.i.d. random variables. For the objective function of \ref{defn:ProbLS}, we have
    \begin{equation}
        \E \left[\sup_{\rvh \in \gH} \left\{L(\rvh) - L^\star(\rvh)\right\}\right] \le 2 \E \left[\sup_{\rvh \in \gH} \left\{\sum\limits_{t=1}^T \frac{1}{T}\eps_t l(\rvx(t),\rvh) \right\}\right].
    \end{equation}
    where $\eps_1, \dots, \eps_T$ are i.i.d. Rademacher random variables independent of $\rmX_T$, that is 
    \begin{equation}
        \eps_t = \left\{\begin{matrix}
            1, &\text{ with probability of } \frac{1}{2}\\
            -1, &\text{ with probability of } \frac{1}{2}
           \end{matrix}\right..
    \end{equation}
\end{lemma}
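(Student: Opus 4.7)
The plan is to apply the classical ghost-sample symmetrization argument from empirical process theory. First I would introduce an independent copy $\rmX_T^\prime = (\rvx^\prime(1), \dots, \rvx^\prime(T))$ of $\rmX_T$, and define the corresponding ghost empirical loss $L^\prime(\rvh) \triangleq \frac{1}{T} \sum_{t=1}^T l(\rvx^\prime(t), \rvh)$. Because the samples are i.i.d., the deterministic objective satisfies $L^\star(\rvh) = \E[l(\rvx^\prime(t), \rvh)] = \E[L^\prime(\rvh) \mid \rmX_T]$. Then Jensen's inequality for the convex map $\sup$ (together with the tower property) gives
\begin{equation}
\E\left[\sup_{\rvh \in \gH} \{L(\rvh) - L^\star(\rvh)\}\right] = \E\left[\sup_{\rvh \in \gH} \E\left[L(\rvh) - L^\prime(\rvh) \mid \rmX_T\right]\right] \le \E\left[\sup_{\rvh \in \gH} \{L(\rvh) - L^\prime(\rvh)\}\right].
\end{equation}

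The next step is the Rademacher symmetrization. For each $t$, the pair $(\rvx(t), \rvx^\prime(t))$ is exchangeable, so the difference $l(\rvx(t), \rvh) - l(\rvx^\prime(t), \rvh)$ has a distribution symmetric about zero; multiplying each summand by an independent Rademacher sign $\eps_t$ leaves the joint law of the entire family indexed by $\rvh$ unchanged. Consequently,
\begin{equation}
\E\left[\sup_{\rvh \in \gH} \{L(\rvh) - L^\prime(\rvh)\}\right] = \E\left[\sup_{\rvh \in \gH} \frac{1}{T}\sum_{t=1}^T \eps_t \left(l(\rvx(t), \rvh) - l(\rvx^\prime(t), \rvh)\right)\right].
\end{equation}

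Finally, I would split the supremum using subadditivity, $\sup(A - B) \le \sup A + \sup(-B)$, to obtain
\begin{equation}
\E\left[\sup_{\rvh \in \gH} \frac{1}{T}\sum_{t=1}^T \eps_t l(\rvx(t), \rvh)\right] + \E\left[\sup_{\rvh \in \gH} \frac{1}{T}\sum_{t=1}^T (-\eps_t) l(\rvx^\prime(t), \rvh)\right].
\end{equation}
Since $-\eps_t$ has the same distribution as $\eps_t$ and $\rvx^\prime(t)$ has the same distribution as $\rvx(t)$ (with independence preserved throughout), the two terms are equal in expectation, which yields the factor of $2$ and finishes the argument. The whole proof is a standard template; the only subtlety is careful bookkeeping of conditional expectations when invoking Jensen's inequality and ensuring that the Rademacher variables are introduced via the exchangeability of $(\rvx(t), \rvx^\prime(t))$ rather than of $\rvx(t)$ alone, so no genuine obstacle is expected.
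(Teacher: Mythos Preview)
Your proposal is correct and follows essentially the same route as the paper's proof: introduce an independent ghost sample, apply Jensen's inequality to pass from $L^\star$ to the ghost empirical loss, insert Rademacher signs by the exchangeability of $(\rvx(t),\rvx^\prime(t))$, then split the supremum and use the distributional symmetry of $\eps_t$ and $\rvx^\prime(t)$ to obtain the factor of $2$. The only cosmetic difference is that the paper phrases the splitting step as a ``triangle inequality'' rather than subadditivity of $\sup$, but the content is identical.
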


\begin{proof}
    Let $\rmX_T^\prime = (\rvx^\prime(1), \dots, \rvx^\prime(T))$ be an independent copy of $\rmX_T$, and $\eps_1, \dots, \eps_n$ be i.i.d. Rademacher random variables independent of $\rmX_T^\prime, \rmX_T$. As $L^\star(\rvh) = \E[l(\rvx^\prime(t), \rvh) \mid \rmX_T]$, by Jensen's inequality, we have 
    \begin{equation}
        \begin{aligned}
            \E \left[\sup_{\rvh \in \gH} \left\{L(\rvh) - L^\star(\rvh)\right\}\right] =& \E \left[\sup_{\rvh \in \gH} \left\{\sum\limits_{t=1}^T \frac{1}{T}\left(l(\rvx(t),\rvh) - \E[l(\rvx(t),\rvh)]\right)\right\}\right] \\
            =& \E \left[\sup_{\rvh \in \gH} \left\{\sum\limits_{t=1}^T \frac{1}{T}\left(\E[l(\rvx(t),\rvh) - l(\rvx^\prime(t),\rvh) \mid \rmX_T]\right)\right\}\right] \\
            \le& \E \left[\sup_{\rvh \in \gH} \left\{\sum\limits_{t=1}^T \frac{1}{T}\left(l(\rvx(t),\rvh) - l(\rvx^\prime(t),\rvh) \right)\right\}\right].
        \end{aligned}
    \end{equation}

    Since $l(\rvx(t),\rvh) - l(\rvx^\prime(t),\rvh)$ and $\eps_i$ are both symmetric, the term $l(\rvx(t),\rvh) - l(\rvx^\prime(t),\rvh)$ has the same law as $\eps_i\left(l(\rvx(t),\rvh) - l(\rvx^\prime(t),\rvh)\right)$. This implies
    \begin{equation}
        \E \left[\sup_{\rvh \in \gH} \left\{\sum\limits_{t=1}^T \frac{1}{T}\left(l(\rvx(t),\rvh) - l(\rvx^\prime(t),\rvh) \right)\right\}\right] = \E \left[\sup_{\rvh \in \gH} \left\{\sum\limits_{t=1}^T \frac{1}{T}\eps_t \left(l(\rvx(t),\rvh) - l(\rvx^\prime(t),\rvh) \right)\right\}\right].
    \end{equation}
    Then, the triangle inequality yields
    \begin{equation}
        \E \left[\sup_{\rvh \in \gH} \left\{\sum\limits_{t=1}^T \frac{1}{T}\eps_t \left(l(\rvx(t),\rvh) - l(\rvx^\prime(t),\rvh) \right)\right\}\right] \le \E \left[\sup_{\rvh \in \gH} \left\{\sum\limits_{t=1}^T \frac{1}{T}\eps_t l(\rvx(t),\rvh) \right\}\right] + \E \left[\sup_{\rvh \in \gH} \left\{\sum\limits_{t=1}^T \frac{1}{T}(-\eps_t) l(\rvx^\prime(t),\rvh) \right\}\right].
    \end{equation}
    The symmetry of $\eps_t$ once more yields that $(-\eps_t) l(\rvx^\prime(t),\rvh)$ has the same law as $\eps_t l(\rvx^\prime(t),\rvh)$. Note that $\rmX_T^\prime$ is an independent copy of $\rmX_T$, then $\eps_t l(\rvx^\prime(t),\rvh)$ has the same law as $\eps_t l(\rvx(t),\rvh)$. Thus we have
    \begin{equation}
        \begin{aligned}
            \E \left[\sup_{\rvh \in \gH} \left\{L(\rvh) - L^\star(\rvh)\right\}\right] \le& \E \left[\sup_{\rvh \in \gH} \left\{\sum\limits_{t=1}^T \frac{1}{T}\eps_t l(\rvx(t),\rvh) \right\}\right] + \E \left[\sup_{\rvh \in \gH} \left\{\sum\limits_{t=1}^T \frac{1}{T}(-\eps_t) l(\rvx^\prime(t),\rvh) \right\}\right]\\
            =& 2 \E \left[\sup_{\rvh \in \gH} \left\{\sum\limits_{t=1}^T \frac{1}{T}\eps_t l(\rvx(t),\rvh) \right\}\right].
        \end{aligned}
    \end{equation}
\end{proof}

Now we propose the consistency theorem for least squares estimation Problem \ref{defn:ProbLS} as following:
\begin{thm}[Consistency]\label{thm:ConsistencyLS}
    For the Least Square Estimation Problem \ref{defn:ProbLS} with independent and identically distributed nodes, assuming the algorithm $\gA$ used for finding the minimizer of empirical objective function satisfying Assumption \ref{assum:minimizer}, moreover, if the following conditions:
    \begin{enumerate}
        \item the LSE Problem \ref{defn:ProbLS} is identifiable, \\
        \item $l(\cdot, \rvh)$ is continuous with respect to $\rvh \in \gH$, \\
        \item with $\eps_1, \dots, \eps_n$ be i.i.d. Rademacher random variables independent of $\rmX_T$, $\{\eps_t l(\rvx(t), \rvh)\}_{\rvh \in \gH}$ is a sub-Gaussian process with variance proxy $M^2 d(\rvh, \rvg)^2$ for some constant $M>0$, \\
        \item $\gH$ is compact
    \end{enumerate}
    are satisfied, then
    \begin{equation}
        \gP\left(d(\hat{\rvh}, \rvh^\star) \ge \eps \right) \le \frac{C}{\eta(\eps)\sqrt{n}}.
    \end{equation}
    where $C = 96\sqrt{3h}\diam(\gH)M$ is a constant, and we can choose $\eta(\eps) = \frac{1}{2}\left(\inf_{d(\rvh, \rvh^\star) \ge \eps} L^\star(\rvh) - L^\star(\rvh^\star)\right)$, which is independent of the population size.
\end{thm}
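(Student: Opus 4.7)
The plan is to follow the same blueprint as the proof of Theorem \ref{thm:ConsistencyIID}, specialised to the single-node \emph{resample and transform} setting of Problem \ref{defn:ProbLS}, where the sample index $t = 1, \dots, T$ plays the role the population index $i$ played in the i.i.d. case, and where the sub-Gaussian structure is handed to us as a hypothesis rather than derived from boundedness plus Lipschitz continuity of the observation operator.

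First I would verify strict identifiability: since $l(\cdot, \rvh)$ is continuous, the deterministic objective $L^\star$ is continuous on $\gH$ (the sub-Gaussian hypothesis supplies the moment bound needed to invoke dominated convergence), so Lemma \ref{le:identifiable} upgrades the assumed identifiability to strict identifiability at $\rvh^\star$ on the compact set $\gH$. Next, Lemma \ref{le:SIMarkov} reduces the consistency claim to controlling the uniform gap $\E[\sup_{\rvh \in \gH} |L(\rvh) - L^\star(\rvh)|]$. I would then symmetrise: applying Lemma \ref{le:Symmetry} to both $L - L^\star$ and $L^\star - L$ (the latter using that $-\eps_t$ has the same law as $\eps_t$) yields
\begin{equation*}
    \E\Bigl[\sup_{\rvh \in \gH} |L(\rvh) - L^\star(\rvh)|\Bigr] \le 4\, \E\Bigl[\sup_{\rvh \in \gH} Z(\rvh)\Bigr], \quad Z(\rvh) \triangleq \frac{1}{T}\sum_{t=1}^T \eps_t\, l(\rvx(t), \rvh).
\end{equation*}

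The central observation is then that $\{Z(\rvh)\}_{\rvh \in \gH}$ is a separable sub-Gaussian process on a rescaled metric. By hypothesis each per-sample increment $\eps_t\,[l(\rvx(t), \rvh) - l(\rvx(t), \rvg)]$ has variance proxy $M^2 d(\rvh, \rvg)^2$, and because the pairs $\{(\rvx(t), \eps_t)\}_{t=1}^T$ are mutually independent, the variance proxies add across $t$, so $Z(\rvh) - Z(\rvg)$ is sub-Gaussian with variance proxy $M^2 d(\rvh, \rvg)^2 / T$. Equivalently, $Z$ is sub-Gaussian on $(\gH, \tilde d)$ with $\tilde d(\rvh, \rvg) = M\,d(\rvh, \rvg)/\sqrt{T}$. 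Dudley's theorem (Lemma \ref{le:Dudley}) combined with the entropy estimate (Lemma \ref{le:BoundedEntropy}) then yields
\begin{equation*}
    \E\Bigl[\sup_{\rvh \in \gH} Z(\rvh)\Bigr] \le 12 \int_0^\infty \sqrt{\log N(\gH, \tilde d, \eps)}\, d\eps \le \frac{24\sqrt{3h}\,M\,\diam(\gH)}{\sqrt{T}},
\end{equation*}
and chaining these four inequalities produces the announced deviation bound with $C = 96\sqrt{3h}\,\diam(\gH)\,M$ and $\eta(\eps) = \tfrac{1}{2}\bigl(\inf_{d(\rvh, \rvh^\star) \ge \eps} L^\star(\rvh) - L^\star(\rvh^\star)\bigr)$.

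The only real technical subtlety is the bookkeeping in the sub-Gaussian step: because the hypothesis controls the process $\{\eps_t l(\rvx(t), \rvh)\}_\rvh$ rather than $l$ itself, one has to carefully combine per-sample sub-Gaussianity with independence across $t$ so that the $1/T$ normalisation in (\ref{eq:LSempiricalObjective}) actually contracts the variance proxy to the $M^2 d(\rvh, \rvg)^2/T$ scale; without this contraction one would be left with a trivial $\gO(1)$ bound rather than the $1/\sqrt{T}$ rate. Everything else is an essentially verbatim reapplication of the ingredients already deployed in Subsection \ref{sec:EstimationIID}, which is why the rate here mirrors the $1/\sqrt{n}$ rate obtained in the i.i.d. population setting.
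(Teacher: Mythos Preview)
Your proposal is correct and follows essentially the same route as the paper: strict identifiability via Lemma \ref{le:identifiable}, reduction to the uniform gap via Lemma \ref{le:SIMarkov}, Rademacher symmetrisation, contraction of the variance proxy to $M^2 d(\rvh,\rvg)^2/T$ by independence across $t$ (the paper phrases this as Azuma--Hoeffding), and finally Dudley plus the entropy bound. The only cosmetic difference is how the two-sided bound on $|L-L^\star|$ is obtained: you symmetrise $L-L^\star$ and $L^\star-L$ separately, whereas the paper adjoins a dummy index $\rvh_0$ with $Z_T(\rvh_0)=0$ and bounds $\E[\sup|Z_T|]$ directly --- both tricks are standard and lead to the same constant up to an inessential factor of $2$.
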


\begin{proof}
    Firstly, since the problem is identifiable and $\gH$ is compact and $l(x_i, \rvh)$ is continuous with respect to $\rvh \in \gH$, by Lemma \ref{le:identifiable}, we have that the problem is strictly identifiable.
    
    Combining this with Lemma \ref{le:SIMarkov}, we have 
    \begin{equation}\label{eq:markov2}
        \gP\left(d(\hat{\rvh}, \rvh^\star) \ge \eps \right) \le \frac{2}{\eta(\eps)} \E \left[\sup_{\rvh \in \gH} \|L(\rvh) - L^\star(\rvh)\|\right].
    \end{equation}

    To estimate the right hand of the above inequality \ref{eq:markov2}, by Lemma \ref{le:Symmetry}, we only need to focus on the estimation of the symmetrized process $\left\{Z_T(\rvh) \right\}_{\rvh \in \gH}$, where
    \begin{equation}
        Z_T(\rvh) = \sum\limits_{t=1}^T \frac{1}{T}\eps_t l(\rvx(t),\rvh).
    \end{equation}
    Note that $\E[Z_T(\rvh)] = \sum\limits_{t=1}^T \frac{1}{T}\E[\eps_t]\E[l(\rvx(t),\rvh)] = 0$, and 
    \begin{equation}
        Z_T(\rvh) - Z_T(\rvg) = \sum\limits_{t=1}^T \frac{1}{T}\eps_t (l(\rvx(t),\rvh) - l(\rvx(t),\rvg)).
    \end{equation}
    Denote $g_t(\rvh, \rvg) = \frac{1}{T}\eps_t (l(\rvx(t),\rvh) - l(\rvx(t),\rvg))$. Since $\{\eps_t l(\rvx(t), \rvh)\}_{\rvh \in \gH}$ is a sub-Gaussian process with variance proxy $M^2 d(\rvh, \rvg)^2$, we have that $g_t(\rvh, \rvg)$ is a sub-Gaussian process with variance proxy $\frac{1}{T^2} M^2 d(\rvh, \rvg)^2$, consequently
    \begin{equation}
        \E[\ef^{\lambda g_t(\rvh, \rvg)} \mid \rmX_{t-1}] \le \ef^{\lambda^2 \frac{1}{T^2} M^2 d(\rvh, \rvg)^2/2}.
    \end{equation}
    
    Therefore, the Azuma-Hoeffding inequality yields that
    \begin{equation}
        \E[\ef^{\lambda (Z_T(\rvh) - Z_T(\rvg))}] = \E[\ef^{\lambda \sum\limits_{t=1}^T g_t(\rvh, \rvg)}] \le \ef^{ \frac{\lambda^2 M^2 d(\rvh, \rvg)^2}{2T}}
    \end{equation}
    Hence, $\left\{Z_T(\rvh) \right\}_{\rvh \in \gH}$ is a sub-Gaussian process on the metric space $(\sT, \tilde{d})$, with the metric $\tilde{d}(\rvh, \rvg) \triangleq \frac{M}{\sqrt{T}} d(\rvh, \rvg)$. Moreover, since $l(\rvx(t), \rvh)$ is continuous with respect to $\rvh \in \gH$, and $\gH \subset \gR^h$, it shows that $\left\{Z_T(\rvh) \right\}_{\rvh \in \gH}$ is also separable. Therefore, the Dudley's inequality Lemma \ref{le:Dudley} yields that 
    \begin{equation}
        \begin{aligned}
            \E \left[\sup_{\rvh \in \gH} Z_T(\rvh)\right] \le& 12 \int_{0}^{\infty} \sqrt{\log N(\gH,\tilde{d},\eps)} \df \eps \\
            =& 12 \int_{0}^{\infty} \sqrt{\log N(\gH,\frac{M}{\sqrt{T}}  d,\eps)} \df \eps \\
            =& 12 \int_{0}^{\infty} \sqrt{\log N(\gH, d,\frac{\sqrt{T}}{M} \eps)} \df \eps \\
            =& \frac{12M}{\sqrt{T}} \int_{0}^{\infty} \sqrt{\log N(\gH, d, \eps)} \df \eps.
        \end{aligned}
    \end{equation}
    
    By Lemma \ref{le:BoundedEntropy}, we have that
    \begin{equation}
        \E \left[\sup_{\rvh \in \gH} Z_T(\rvh)\right] \le \frac{24\sqrt{3h}\diam(\gH)M}{\sqrt{T}}.
    \end{equation}
    At last, consider adding the zero random variable $Z_T(\rvh_0) = 0$ to the random process. The supremum of absolute value can be rewrite as 
    \begin{equation}
        \begin{aligned}
            \E \left[\sup_{\rvh \in \gH} |Z_T(\rvh)|\right] \le& \E \left[\sup_{\rvh \in \gH\cup\{\rvh_0\}} |Z_T(\rvh)|\right]\\
            =& \E \left[\sup_{\rvh \in \gH\cup\{\rvh_0\}} \max\{Z_T(\rvh), -Z_T(\rvh)\}\right]\\
            =& \E \left[\max\{\sup_{\rvh \in \gH\cup\{\rvh_0\}} Z_T(\rvh), \sup_{\rvh \in \gH\cup\{\rvh_0\}} -Z_T(\rvh)\}\right]\\
            \le& \E \left[\sup_{\rvh \in \gH\cup\{\rvh_0\}} Z_T(\rvh)\right] + \E \left[\sup_{\rvh \in \gH\cup\{\rvh_0\}} -Z_T(\rvh)\right].
        \end{aligned}
    \end{equation}
    The last inequality is due to $\sup_{\rvh \in \gH\cup\{\rvh_0\}} Z_T(\rvh) \ge Z_T(\rvh_0) =0$ and $\sup_{\rvh \in \gH\cup\{\rvh_0\}} -Z_T(\rvh) \ge -Z_T(\rvh_0) =0$. It is routine to verify that $\left\{Z_T(\rvh) \right\}_{\rvh \in \gH\cup\{\rvh_0\}}$ and $\left\{-Z_T(\rvh) \right\}_{\rvh \in \gH\cup\{\rvh_0\}}$ are both separable sub-Gaussian process retaining the same metric as $\left\{Z_T(\rvh) \right\}_{\rvh \in \gH}$. And notice that adding one point to $\gH$ would not affect the covering number, we finally result in that 
    \begin{equation}
        \E \left[\sup_{\rvh \in \gH} |Z_T(\rvh)|\right] \le \frac{48\sqrt{3h}\diam(\gH)M}{\sqrt{T}}.
    \end{equation}
    
    The proof is completed with assigning this result to the symmetrized version of inequality \ref{eq:markov2}.
\end{proof}

When the observation operator $f$ in Problem \ref{defn:ProbLS} is nonlinear, then problem turns to a Nonlinear Least Square (NLS) estimation problem. The consistency of NLS estimation problem can be derived from the above consistency Theorem \ref{thm:ConsistencyLS} with the following corollary.

\begin{coro}[Consistency for Nonlinear Least Square Estimation]\label{coro:NLS}
    For the Nonlinear Least Square estimation problem that the observation operator $f$ is a nonlinear operator in Problem \ref{defn:ProbLS}, if the following conditions:
    \begin{enumerate}
        \item the nonlinear mapping $f$ is injective with respect to $\rvh$, \\
        \item the nonlinear mapping $f$ is bounded and uniformly Lipschitz continuous with respect to $\rvh$ with some common constant $M>0$, \\
        \item $\gH$ is compact
    \end{enumerate}
    are satisfied, then
    \begin{equation}
        \gP\left(d(\hat{\rvh}, \rvh^\star) \ge \eps \right) \le \frac{C}{\eta(\eps)\sqrt{T}}.
    \end{equation}
    where $C$ is a constant, and we can choose $\eta(\eps) = \frac{1}{2}\left(\inf_{d(\rvh, \rvh^\star) \ge \eps} L^\star(\rvh) - L^\star(\rvh^\star)\right)$, which is independent of the population size.
\end{coro}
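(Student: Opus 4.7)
The plan is to derive the corollary directly from Theorem \ref{thm:ConsistencyLS}, so the work reduces to verifying its four hypotheses for the NLS specialization and then reading off the constant. Compactness of $\gH$ is given, and continuity of $\rvh \mapsto l(\rvx(t),\rvh)$ for fixed $\rvx(t)$ is immediate from the Lipschitz assumption on $f$ together with the identity $l(\rvx(t),\rvh) = \|f(\rvx(t),\rvh) - f(\rvx(t),\rvh^\star) - \omega_{(t)}\|^2$. For identifiability, I would expand $L^\star$ as in \eqref{eq:detObjExpan}: using $\E[\omega_{(t)}] = 0$ and independence of $\omega_{(t)}$ from $\rvx(t)$ yields
\begin{equation*}
    L^\star(\rvh) - L^\star(\rvh^\star) = \E\|f(\rvx(1),\rvh) - f(\rvx(1),\rvh^\star)\|^2,
\end{equation*}
which is strictly positive for $\rvh \ne \rvh^\star$ since $f$ is injective in $\rvh$. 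Together with continuity of $L^\star$ (inherited from boundedness and Lipschitz continuity of $f$ via dominated convergence) and compactness of $\gH$, Lemma \ref{le:identifiable} upgrades this to strict identifiability.

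The main obstacle is verifying the sub-Gaussian process condition on $\{\eps_t l(\rvx(t),\rvh)\}_{\rvh \in \gH}$. Writing $A_{\rvh} \triangleq f(\rvx(t),\rvh) - f(\rvx(t),\rvh^\star)$ and using the polarization identity $\|u\|^2 - \|v\|^2 = \langle u-v,\, u+v\rangle$,
\begin{equation*}
    l(\rvx(t),\rvh) - l(\rvx(t),\rvg) = \langle A_{\rvh} - A_{\rvg},\, A_{\rvh} + A_{\rvg}\rangle - 2\langle A_{\rvh} - A_{\rvg},\, \omega_{(t)}\rangle.
\end{equation*}
I would treat the two pieces separately. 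The first piece is bounded in absolute value by $4M^2 d(\rvh,\rvg)$ by Cauchy--Schwarz together with $\|A_{\rvh} - A_{\rvg}\| \le M\, d(\rvh,\rvg)$ and $\|A_{\rvh} + A_{\rvg}\| \le 4M$; multiplied by $\eps_t$, Hoeffding's lemma makes it sub-Gaussian with variance proxy at most $(4M^2 d(\rvh,\rvg))^2$. For the second piece, the symmetry trick already used in the proof of Theorem \ref{thm:ConsistencyIID} (combining the symmetry of $\eps_t$ and of $\omega_{(t)}$) shows that $\eps_t \cdot (-2)\langle A_{\rvh} - A_{\rvg}, \omega_{(t)}\rangle$ has the same law as $-2\langle A_{\rvh} - A_{\rvg}, \omega_{(t)}\rangle$; conditional on $\rvx(t)$ this is Gaussian with variance at most $4\sigma^2 M^2 d(\rvh,\rvg)^2$, so unconditionally it is sub-Gaussian with the same variance proxy. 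Summing the two contributions, $\eps_t(l(\rvx(t),\rvh) - l(\rvx(t),\rvg))$ is sub-Gaussian with variance proxy $K^2 d(\rvh,\rvg)^2$, where $K$ is an explicit constant depending only on $M$ and $\sigma$ (roughly $K = 4M^2 + 2\sigma M$ after combining via the triangle inequality for the Orlicz norm, or equivalently $K^2 = 2(4M^2)^2 + 2(2\sigma M)^2$ by pairing of independent sub-Gaussians).

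With all four hypotheses of Theorem \ref{thm:ConsistencyLS} established, the theorem applies with metric $d$ and sub-Gaussian constant $K$, delivering
\begin{equation*}
    \gP\left(d(\hat{\rvh}, \rvh^\star) \ge \eps\right) \le \frac{C}{\eta(\eps)\sqrt{T}}, \qquad C = 96\sqrt{3h}\,\diam(\gH)\,K,
\end{equation*}
with $\eta(\eps) = \tfrac{1}{2}(\inf_{d(\rvh,\rvh^\star)\ge \eps} L^\star(\rvh) - L^\star(\rvh^\star))$, independent of $T$. The only genuine calculation is the sub-Gaussian bookkeeping in the central step; the remaining verifications are direct consequences of injectivity, Lipschitz continuity, boundedness of $f$, and compactness of $\gH$.
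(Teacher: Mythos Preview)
Your proposal is correct and follows essentially the same route as the paper: verify identifiability from injectivity of $f$, then decompose $\eps_t\bigl(l(\rvx(t),\rvh)-l(\rvx(t),\rvg)\bigr)$ via the polarization identity into a bounded piece (handled by Hoeffding) and a Gaussian piece (handled by the $\eps_t/\omega_{(t)}$ symmetry trick and conditioning on $\rvx(t)$), and finally invoke Theorem~\ref{thm:ConsistencyLS}. The only cosmetic difference is that the paper keeps the boundedness constant $\|f\|_\infty$ separate from the Lipschitz constant $M$, arriving at variance proxy $[2(2\|f\|_\infty+\sigma)M\,d(\rvh,\rvg)]^2$, whereas you fold both into a single $M$; either reading yields the stated bound.
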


\begin{proof}
    Recall Theorem \ref{thm:ConsistencyLS}, we need to verify the identifiability condition 1. and the sub-Gaussian condition 3. in the Theorem \ref{thm:ConsistencyLS}. 

    The empirical objective function of Nonlinear Least Square is 
    \begin{equation}
        L(\rvh)  = \frac{1}{T} \sum\limits_{t=1}^T \|f(\rvx(t), \rvh) - f(\rvx(t), \rvh^\star) - \omega_{(t)}\|^2.
    \end{equation}
    
    Denote $l(\rvx(t), \rvh) = \|f(\rvx(t), \rvh) - f(\rvx(t), \rvh^\star) - \omega_{(t)}\|^2$. The deterministic objective function 
    \begin{equation}
        L^\star(\rvh) = \E[L(\rvh)] = \E[l(\rvx(t), \rvh)] = \E[\|f(\rvx(t), \rvh) - f(\rvx(t), \rvh^\star)\|^2] + r \sigma^2.
    \end{equation}
    Since $f$ is injective with respect to $\rvh$, then $L^\star(\rvh)$ is uniquely minimized by $\rvh^\star$, which means that the problem is identifiable.

    Let $\eps_1, \dots, \eps_T$ be i.i.d. Rademacher random variables independent of $\rmX_T$. We aim to verify that $\{\eps_t l(\rvx(t), \rvh)\}_{\rvh \in \gH}$ is a sub-Gaussian process. Note that 
    \begin{equation}
        \begin{aligned}
            \eps_t l(\rvx(t), \rvh) - \eps_t l(\rvx(t), \rvg) =& \eps_t \langle f(\rvx(t), \rvh) - f(\rvx(t), \rvg), f(\rvx(t), \rvh) + f(\rvx(t), \rvg) - 2f(\rvx(t), \rvh^\star) - 2 \omega_{(t)}\rangle \\
            =& \Xi_1(\rvx(t), \rvh, \rvg) + \Xi_2(\rvx(t), \rvh, \rvg).
        \end{aligned}
    \end{equation}
    where we denote 
    \begin{align}
        \Xi_1(\rvx(t), \rvh, \rvg) \triangleq& \eps_t \langle f(\rvx(t), \rvh) - f(\rvx(t), \rvg), f(\rvx(t), \rvh) + f(\rvx(t), \rvg) - 2f(\rvx(t), \rvh^\star)\rangle, \\
        \Xi_2(\rvx(t), \rvh, \rvg) \triangleq& -2\eps_t \langle f(\rvx(t), \rvh) - f(\rvx(t), \rvg), \omega_{(t)}\rangle.
    \end{align}

    For the first term $\Xi_1(\rvx(t), \rvh, \rvg)$, suppose $f(\rvx(t), \rvh)$ is bounded and uniformly Lipschitz continuous with respect to $\rvh$ with some common constant $M>0$, then by Cauchy-Schwarz inequality, we have
    \begin{equation}
        |\Xi_1(\rvx(t), \rvh, \rvg)| \le \|f(\rvx(t), \rvh) - f(\rvx(t), \rvg)\|\cdot \|f(\rvx(t), \rvh) + f(\rvx(t), \rvg) - 2f(\rvx(t), \rvh^\star)\| \le 4\|f\|_\infty M d(\rvh, \rvg).
    \end{equation}
    Thus, Hoeffding's inequality yields that $\Xi_1(\rvx(t), \rvh, \rvg)$ is sub-Gaussian with variance proxy of $(4\|f\|_\infty M d(\rvh, \rvg))^2$.

    As for the second term $\Xi_2(\rvx(t), \rvh, \rvg)$, notice that the gaussian random variable $\omega_{(t)}$ and the Rademacher random variable $\eps_t$ are both symmetric, thus the law of $\Xi_2(\rvx(t), \rvh, \rvg)$ is same as the law of $\Xi_2^\prime(\rvx(t), \rvh, \rvg) \triangleq 2\langle f(\rvx(t), \rvh) - f(\rvx(t), \rvg), \omega_{(t)}\rangle$. Therefore, we can calculate the expectation via the properties of gaussian random variable.
    \begin{equation}
        \E\left[\ef^{\lambda \Xi_2(\rvx(t), \rvh, \rvg)}\right] = \E\left[\ef^{\lambda \Xi_2^\prime(\rvx(t), \rvh, \rvg)}\right] = \E\left[\E\left[\ef^{\lambda \Xi_2^\prime(\rvx(t), \rvh, \rvg)}\mid \rvx(t)\right]\right].
    \end{equation}
    Since $\omega_{(t)}$ is gaussian with mean $0$ and variance $\sigma^2\mI_r$, direct calculation yields
    \begin{equation}
        \E\left[\ef^{\lambda \Xi_2^\prime(\rvx(t), \rvh, \rvg)}\mid \rvx(t)\right] = \ef^{4\lambda^2\sigma^2\|f(\rvx(t), \rvh) - f(\rvx(t), \rvg)\|^2/2} \le \ef^{4\lambda^2\sigma^2(Md(\rvh,\rvg))^2/2}.
    \end{equation}
    That implies
    \begin{equation}
        \E\left[\ef^{\lambda \Xi_2(\rvx(t), \rvh, \rvg)}\right] \le \ef^{\lambda^2(2\sigma M d(\rvh,\rvg))^2/2},
    \end{equation}
    which means $\Xi_2(\rvx(t), \rvh, \rvg)$ is sub-Gaussian with variance proxy of $(2\sigma M d(\rvh,\rvg))^2$.

    Combining the two estimations, we have that $\eps_t l(\rvx(t), \rvh) - \eps_t l(\rvx(t), \rvg)$ is sub-Gaussian with variance proxy of $[2(2\|f\|_\infty +\sigma) M d(\rvh,\rvg)]^2$.

    This result means that $\{\eps_t l(\rvx(t), \rvh)\}_{\rvh \in \gH}$ is a sub-Gaussian process with variance proxy of $[2(2\|f\|_\infty +\sigma) M]^2 d(\rvh,\rvg)^2$.

    Then, Theorem \ref{thm:ConsistencyLS} can be applied to the nonlinear least squares estimation problem to complete the proof.
\end{proof}

For Linear Least Square Estimation, it is a special case of Nonlinear Least Square Estimation where the observation operator $f$ is linear, that is, $f(\rvx(t), \rvh) = \rvx(t)^\intercal \rvh$. The consistency of Linear Least Square Estimation can be derived from the above corollary \ref{coro:NLS} with the following corollary.

\begin{coro}[Consistency for Linear Least Square Estimation]\label{coro:LLS}
    For the Linear Least Square estimation problem that the observation operator $f(\rvx(t), \rvh) = \rvx(t)^\intercal \rvh$ in Problem \ref{defn:ProbLS}, if the following conditions:
    \begin{enumerate}
        \item $\E[\rvx(t) \rvx(t)^\intercal]$ is positive definite\\
        \item the given random variables $\{\rvx(t)\}_{i=1,\dots,n}$ is bounded with some constant $M>0$, \\
        \item $\gH$ is compact, \\
    \end{enumerate}
    are satisfied, then with the Linear Least Square estimator 
    \begin{equation}\label{eq:LLSestimator}
        \hat{\rvh} = \left(\rmX_T \rmX_T^\intercal\right)^{-1}\rmX_T \rmY(\rvh^\star),
    \end{equation}
    we have that
    \begin{equation}
        \gP\left(d(\hat{\rvh}, \rvh^\star) \ge \eps \right) \le \frac{C}{\eta(\eps)\sqrt{T}}.
    \end{equation}
    where $C$ is a constant, and we can choose $\eta(\eps) = \frac{1}{2}\left(\inf_{d(\rvh, \rvh^\star) \ge \eps} L^\star(\rvh) - L^\star(\rvh^\star)\right)$, which is independent of the population size.
\end{coro}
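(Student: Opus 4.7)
The plan is to realize Linear Least Squares as a special case of the Nonlinear Least Squares setting treated in Corollary \ref{coro:NLS} and to transfer its $1/\sqrt{T}$ scaling bound. The observation map $f(\rvx, \rvh) = \rvx^\intercal \rvh$ is a (trivially) nonlinear mapping in the sense of the general framework, so I only need to check that the structural hypotheses of Corollary \ref{coro:NLS} follow from conditions 1--3, and separately that the prescribed closed-form estimator satisfies Assumption \ref{assum:minimizer} so that Corollary \ref{coro:NLS} actually applies to it.

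For identifiability, linearity of $f$ reduces the deterministic objective to a quadratic form:
\begin{equation*}
L^\star(\rvh) = \E\bigl[\|\rvx(t)^\intercal(\rvh - \rvh^\star)\|^2\bigr] + r\sigma^2 = (\rvh - \rvh^\star)^\intercal \E\bigl[\rvx(t)\rvx(t)^\intercal\bigr](\rvh - \rvh^\star) + r\sigma^2.
\end{equation*}
Condition 1 (positive definiteness of $\E[\rvx(t)\rvx(t)^\intercal]$) therefore forces $L^\star$ to be strictly convex in $\rvh$ and uniquely minimized at $\rvh^\star$. This supplies exactly the identifiability conclusion that the pointwise injectivity hypothesis of Corollary \ref{coro:NLS} was used to derive; since injectivity enters its proof only through this consequence, I can bypass it directly. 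For boundedness and Lipschitz continuity, boundedness of $\{\rvx(t)\}$ by some constant $M>0$ together with compactness of $\gH$ implies $|f(\rvx(t),\rvh)| \le M \sup_{\rvh \in \gH}\|\rvh\|$ and $\|f(\rvx(t),\rvh) - f(\rvx(t),\rvg)\| \le M\|\rvh - \rvg\|$ uniformly in $\rvx(t)$, so condition 2 of Corollary \ref{coro:NLS} holds, and condition 3 (compactness of $\gH$) is assumed.

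The main obstacle is legitimizing Assumption \ref{assum:minimizer} for the specific closed-form estimator $\hat{\rvh} = (\rmX_T \rmX_T^\intercal)^{-1} \rmX_T \rmY(\rvh^\star)$. The empirical loss $L(\rvh) = \frac{1}{T}\|\rmX_T^\intercal \rvh - \rmY(\rvh^\star)\|^2$ is a convex quadratic whose unique minimizer coincides with $\hat{\rvh}$ precisely when $\rmX_T \rmX_T^\intercal$ is invertible. Since $\rvx(t)$ is bounded and $\E[\rvx(t)\rvx(t)^\intercal]$ is positive definite, a matrix Hoeffding / Bernstein argument gives $\|\frac{1}{T}\rmX_T\rmX_T^\intercal - \E[\rvx\rvx^\intercal]\|_{\mathrm{op}} = \gO(1/\sqrt{T})$ with probability at least $1 - e^{-cT}$, so $\rmX_T\rmX_T^\intercal$ is invertible off an event whose probability decays exponentially in $T$, which may be absorbed into the constant $C$ without degrading the $1/\sqrt{T}$ scaling. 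On the complementary event, $\hat{\rvh}$ is the exact minimizer of $L$, and the proof of Corollary \ref{coro:NLS}, namely Markov's inequality on $\sup_{\rvh \in \gH}|L(\rvh) - L^\star(\rvh)|$ via Rademacher symmetrization and Dudley chaining, yields the stated bound
\begin{equation*}
\gP\left(d(\hat{\rvh}, \rvh^\star) \ge \eps\right) \le \frac{C}{\eta(\eps)\sqrt{T}}
\end{equation*}
with $\eta(\eps) = \frac{1}{2}\bigl(\inf_{d(\rvh,\rvh^\star) \ge \eps} L^\star(\rvh) - L^\star(\rvh^\star)\bigr)$, concluding the proof.
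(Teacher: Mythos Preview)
Your proposal is correct and follows essentially the same route as the paper: reduce to Corollary \ref{coro:NLS} by noting that positive definiteness of $\E[\rvx(t)\rvx(t)^\intercal]$ yields identifiability (standing in for the injectivity hypothesis, which in that corollary is used only to obtain identifiability), that boundedness of $\rvx(t)$ together with compactness of $\gH$ gives the required boundedness and uniform Lipschitz continuity of $f$, and that the closed-form LLS estimator is the minimizer of the empirical objective so Assumption \ref{assum:minimizer} holds. The paper's own proof simply asserts this last point without addressing invertibility of $\rmX_T\rmX_T^\intercal$; your matrix-concentration argument to control the non-invertible event is an added layer of rigor not present in the paper, but otherwise the approaches coincide.
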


\begin{proof}
    The empirical objective function Linear Least Square problem is 
    \begin{equation}
        L(\rvh)  = \frac{1}{n} \sum\limits_{t=1}^T (\rvx(t)^\intercal(\rvh - \rvh^\star) - \omega_{(t)})^2.
    \end{equation}
    
    Denote $l(\rvx(t), \rvh) = (\rvx(t)^\intercal(\rvh - \rvh^\star) - \omega_{(t)})^2$. The deterministic objective function is
    \begin{equation}
        L^\star(\rvh) = \E[L(\rvh)] = \E[l(\rvx(t), \rvh)] = (\rvh - \rvh^\star)^\intercal \E[\rvx(t) \rvx(t)^\intercal] (\rvh - \rvh^\star) + \sigma^2.
    \end{equation}
    Since the non-central second moment $\E[\rvx(t) \rvx(t)^\intercal]$ is positive definite, $L^\star(\rvh)$ is uniquely minimized by $\rvh^\star$, which means that the problem is identifiable. This condition plays the same role as the condition 1. in corollary \ref{coro:NLS}.

    For the second condition in corollary \ref{coro:NLS}, the boundness of $\rvx(t)$ guarantees the uniformly Lipschitz continuity of $f(\rvx(t), \rvh) = \rvx(t)^\intercal \rvh$ with respect to $\rvh$, and the boundness of $f$ is guaranteed by the boundness of $\rvx(t)$ and the compactness of $\gH$. Therefore, the second condition in corollary \ref{coro:NLS} is satisfied. 

    Moreover, the Linear Least Square estimator (\ref{eq:LLSestimator}) is the minimizer of the empirical objective function, therefore the linear least squares algorithm satisfies Assumption \ref{assum:minimizer}. Thus, the proof is completed by corollary \ref{coro:NLS}.

\end{proof}

\end{appendix}
\bibliographystyle{IEEEtran}
\bibliography{sample}

\section{Biography Section}

\begin{IEEEbiographynophoto}{Yi Yu}
Yi Yu received the B.S. degree in Mathematics from Wuhan University, China, in 2021. He is currently pursuing
the Ph.D. degree with the School of Mathematical Sciences, Fudan University, Shanghai, China. His current research interests include information geometry and machine learning theory.
\end{IEEEbiographynophoto}
\vspace{11pt}

\begin{IEEEbiographynophoto}{Yubo Hou}
Yubo Hou received the B.S. degree in mathematics from Wuhan University, China, in 2022. She is currently pursuing the Ph.D. degree with the School of Mathematical Sciences, Fudan University, Shanghai, China. Her current research interests include multimodal data assimilation and encoding/decoding of Digital Twin Brain (DTB).
\end{IEEEbiographynophoto}
\vspace{11pt}

\begin{IEEEbiographynophoto}{Yinchong Wang}
Yinchong Wang received the B.S. degree in Mathematics from Shandong University, China, in 2020, and the Ph.D. degree in Applied Mathematics from Fudan University, China, in 2025. He is currently employed at China Mobile Internet Company.
\end{IEEEbiographynophoto}
\vspace{11pt}

\begin{IEEEbiographynophoto}{Nan Zhang}
Nan Zhang obtained a B.S. in Mathematics and a B.A. in Economics from Peking University, Beijing, China, in 2010. He earned the Ph.D. in Statistics from Texas A\&M University and held a postdoctoral research position for one year before joining Fudan. He is currently Associate Professor affiliated with the Institute of Science and Technology for Brain-Inspired Intelligence and the School of Data Science, Fudan University. His research lies at the intersection of statistics and artificial intelligence, with a focus on nonparametric methods, statistical machine learning, and functional data analysis.
\end{IEEEbiographynophoto}
\vspace{11pt}

\begin{IEEEbiographynophoto}{Jianfeng Feng}
Jianfeng Feng (Senior Member, IEEE) received the
BS, MS, and PhD degrees from the Department of
Probability and Statistics, Peking University, China.
He is the chair professor with the Shanghai National
Centre for Mathematic Sciences and the Dean with
the Brain-Inspired AI Institute, Fudan University. He
leads the DTB project. He has been developing new
mathematical, statistical, and computational theories
and methods to meet the challenges raised in neuroscience and mental health research.
\end{IEEEbiographynophoto}
\vspace{11pt}

\begin{IEEEbiographynophoto}{Wenlian Lu}
Wenlian Lu (Senior Member, IEEE) received the
B.S. degree in Mathematics and the Ph.D. degree
in Applied Mathematics from Fudan University,
Shanghai, China, in 2000 and 2005, respectively.
He was a Postdoctoral Fellow with the Max
Planck Institute for Mathematics in Science, Leipzig,
Germany, from 2005 to 2007. He was a MarieCurie International Incoming Research Fellow with
the Department of Computer Sciences, University
of Warwick, Coventry, U.K., from 2012 to 2014.
He is currently a Professor with the School of
Mathematical Sciences and the Institute for Science and Technology of BrainInspired AI, Fudan University. His current research interests include neural
networks, cybersecurity dynamics, computational systems biology, nonlinear
dynamical systems, and complex systems.
Prof. Lu has served as an Associate Editor for IEEE TRANSACTIONS ON
NEURAL NETWORKS AND LEARNING SYSTEMS from 2013 to 2019 and
Neurocomputing from 2010 to 2015.
\end{IEEEbiographynophoto}
\vfill

\end{document}